\newtheorem{theorem}{Theorem}[section]
\newtheorem{conjecture}[theorem]{Conjecture}
\newtheorem{proposition}[theorem]{Proposition}
\newtheorem{lemma}[theorem]{Lemma}
\newtheorem{question}[theorem]{Question}
\newtheorem{claim}[]{Claim}
\theoremstyle{definition}
\newtheorem{definition}[theorem]{Definition}
\theoremstyle{remark}
\newtheorem{remark}[theorem]{Remark}
\numberwithin{equation}{section}
\newcommand{\dv}{\mathrm{div}}
\newcommand{\mf}{\mathbf}
\newcommand{\mb}{\mathbb}
\newcommand{\mc}{\mathcal}
\newcommand{\ms}{\mathscr}
\newcommand{\mk}{\mathfrak}
\newcommand{\wti}{\widetilde}
\newcommand{\Area}{\mathrm{Area}}
\newcommand{\Si}{\Sigma}
\newcommand{\ppt}{\frac{\partial}{\partial t}}
\newcommand{\pps}{\frac{\partial}{\partial s}}
\newcommand{\Xoe}{{X_\perp^\epsilon}}
\newcommand{\Xpe}{{X_\parallel^\epsilon}}
\newcommand{\dve}{\dv^\epsilon}
\newcommand{\vte}{\vartheta_\epsilon}
\newcommand{\he}{{h_\epsilon}}
\newcommand{\dist}{\operatorname{dist}}
\newcommand{\n}{\mathbf n}
\DeclareMathOperator{\Ric}{Ric}
\DeclareMathOperator{\spt}{spt}
\title[Infinitely many free boundary minimal hypersurfaces]{Existence of infinitely many free boundary minimal hypersurfaces}
\date{\today}
\author{Zhichao Wang}
\address{Max-Planck Institute for Mathematics, Vivatsgasse 7, 
53111 Bonn, Germany}
\email{wangzhichaonk@gmail.com}
\begin{document}

\begin{abstract}
In this paper, we prove that in any compact Riemannian manifold with smooth boundary, of dimension at least 3 and at most 7, there exist infinitely many almost properly embedded free boundary minimal hypersurfaces. This settles the free boundary version of Yau's conjecture. The proof uses adaptions of A. Song's work and the early works by Marques-Neves in their resolution to Yau's conjecture, together with Li-Zhou's regularity theorem for free boundary min-max minimal hypersurfaces.
\end{abstract}

\maketitle

\section{Introduction}
\subsection{Motivation from closed Riemannian manifolds}
Finding out minimal submanifolds has always been an important theme in Riemannian geometry. In 1960s, Almgren \citelist{\cite{Alm62}\cite{Alm65}} initiated a variational theory to find minimal submanifolds in any compact Riemannian manifolds (with or without boundary). He proved that weak solutions, in the sense of stationary varifolds, always exist. About twenty years later, the interior regularity theory for codimension one hypersurfaces was developed  by Pitts \cite{Pi} and Schoen-Simon \cite{SS}. As a consequence, they
showed that in any closed manifold $(M^{n+1} ,g)$, there exists at least one embedded closed minimal hypersurface, which is smooth except possibly along a singular set of Hausdorff codimension at least 7. Then Yau conjectured the following:
\begin{conjecture}[S.-T. Yau \cite{Yau82}]\label{conj:yau}
Every closed three-dimensional Riemannian manifold $(M^3, g)$ contains
infinitely many (immersed) minimal surfaces.
\end{conjecture}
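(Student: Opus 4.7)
The plan is to prove Yau's conjecture following the strategy of Marques--Neves for generic metrics and Song for arbitrary metrics. The central object is the \emph{volume spectrum} $\{\omega_p(M,g)\}_{p\geq 1}$, where $\omega_p(M,g)=\inf_\Phi\sup_x \M(\Phi(x))$ is taken over continuous $p$-sweepouts $\Phi$ of the cycle space $\Z_2(M;\mb{Z}_2)$. By the Almgren--Pitts--Marques--Neves min-max theorem, each $\omega_p$ is realised as the mass of an integral stationary varifold $\sum_i n_{p,i}\Si_{p,i}$ whose support consists of smooth closed embedded minimal surfaces (no singularities arise in dimension three, since the singular set has codimension at least seven).

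The analytic heart of the argument is the Weyl law of Liokumovich--Marques--Neves,
\[
\lim_{p\to\infty}\omega_p(M,g)\,p^{-1/3}=a_0\,\Vol(M,g)^{2/3},
\]
for a universal constant $a_0>0$, which forces $\omega_p\to\infty$ with $\omega_{p+1}-\omega_p\to 0$. Suppose first that $g$ is \emph{bumpy}, which by White's theorem is a generic condition. Assuming for contradiction that only finitely many closed embedded minimal surfaces $\Si_1,\ldots,\Si_N$ exist, I would invoke the Lusternik--Schnirelmann inequality of Marques--Neves: if $\omega_p=\omega_{p+k}$ for some $k\geq 1$, then the common critical level supports a continuous $k$-parameter family of stationary varifolds, contradicting bumpiness (which makes each minimal surface isolated in moduli). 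Hence the widths are strictly increasing in the discrete set $\{\sum n_i|\Si_i|:n_i\in\mb{Z}_{\geq 0}\}$, and a careful accounting for multiplicities along the increasing sequence $\omega_p\to\infty$ extracts a new embedded minimal surface for infinitely many $p$.

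Upgrading to arbitrary $g$ is the core of Song's contribution, and I expect it to be the main obstacle. The plan is to approximate $g$ by bumpy metrics $g_j\to g$; each $g_j$ carries infinitely many closed minimal surfaces, but the naive limit may fail because $g_j$-minimal surfaces of unbounded area give no compactness, and even bounded-area ones can collapse with integer multiplicity to a single $g$-minimal surface. Song's device is to cut $M$ along the putative finite list of $g$-minimal surfaces, yielding pieces with stable mean-convex boundary, and to pass to a noncompact limit $(\hat M,\hat g)$ with cylindrical ends and locally bounded geometry. On $(\hat M,\hat g)$ one applies a variant of Almgren--Pitts min-max together with a barrier argument at the ends to produce a properly embedded complete minimal surface of controlled area, which then descends to a new closed minimal surface of $(M,g)$, contradicting finiteness. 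The hardest step is precisely this noncompact min-max combined with the barrier argument: one must preclude the constructed surface from escaping to infinity along a cylindrical end, and verify that what descends is genuinely new rather than a piece of a cutting surface taken with multiplicity. This demands quantitative geometric estimates near infinity and a delicate mass-concentration analysis to rule out multiplicity collapse in the limit.
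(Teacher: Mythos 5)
This statement is not proved in the paper: it is stated as Conjecture \ref{conj:yau} and the paper merely records that it was resolved by Song \cite{Song18}, while the paper's own theorem is the free boundary analogue in compact manifolds with boundary. Your sketch therefore cannot be compared against an internal proof; what it can be compared against is the paper's adaptation of Song's strategy (cutting along stable hypersurfaces, forming a noncompact manifold with cylindrical ends, running min-max there, and deriving a contradiction from the growth of the $p$-widths versus a uniform gap), and your sketch does broadly reproduce that outline for the closed case.

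There is, however, a concrete error in your account of the bumpy/Frankel step. You assert that if $\omega_p=\omega_{p+k}$ then the Lusternik--Schnirelmann inequality produces a ``continuous $k$-parameter family of stationary varifolds, contradicting bumpiness.'' That is not how the LS inequality is used. What Marques--Neves prove is that if $\omega_p=\omega_{p+1}$ and the (embedded) Frankel property holds, then the collection of min-max stationary integral varifolds of mass $\omega_p$ is infinite; bumpiness plays no role in this deduction, and indeed the contradiction with the finiteness hypothesis is immediate (finitely many minimal hypersurfaces, each contributing a single component with multiplicity under Frankel, give only finitely many varifolds of a fixed mass). Conversely, bumpiness alone does not yield the Frankel property, and without Frankel the equality $\omega_p=\omega_{p+1}$ does not by itself yield new surfaces. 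In the paper's framework this is exactly why a large part of Section \ref{sec:pre for fbmh} is devoted to engineering a piece $(N,\partial N,T)$ on which a Frankel-type property (Lemma \ref{lem:general existence of good fbmh}) holds after cutting, and why the final contradiction in Theorem \ref{thm:infitely many fbmhs} proceeds not through a repeated width but through the quantitative estimates $\omega_{p+1}(\mc C(N))-\omega_p(\mc C(N))\geq \Area(T_1)$ and $\omega_p(\mc C(N))\leq p\cdot\Area(T_1)+Cp^{1/(n+1)}$ of Lemma \ref{lem:growth of width}, the single-component decomposition \eqref{eq:one component}, the area lower bound of Lemma \ref{lem:area lower bound}, and Song's combinatorial Lemma 13. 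Your description of the noncompact min-max and barrier step is directionally right, but the mechanism that actually closes the argument is this arithmetic/combinatorial pinching of the widths, which your proposal does not isolate.
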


The first progress of this Yau's Conjecture \ref{conj:yau} was made by Marques-Neves in \cite{MN17}, where they proved the existence of infinitely many embedded minimal hypersurfaces for closed manifolds with positive Ricci curvature, or more generally, for closed manifolds satisfying the ``Embedded Frankel Property''. Using the Weyl Law for the volume spectrum \cite{LMN16}, Irie-Marques-Neves \cite{IMN17} proved Yau's conjecture for generic metrics. Recently, in a remarkable work \cite{Song18}, A. Song completely solved the Conjecture \ref{conj:yau} building on the methods developed by Marues-Neves \citelist{\cite{MN17}\cite{MN16}}. Such a method also helped Song give a much stronger theorem: every closed Riemannian manifold $(M^{n+1},g)$ of dimension $3\leq (n+1)\leq 7$ contains infinitely many embedded minimal hypersurfaces.

\vspace{0.5em}
\subsection{Questions and Main results in compact Riemannian manifolds with boundary}
In this paper, we consider compact manifolds with boundary $(M,\partial M,g)$, which is the program set out by Almgren in the hypersurface
case \citelist{\cite{Alm62}\cite{Alm65}}. Then each critical point of the area functional is so called a {\em free boundary minimal hypersurface}, which is a hypersurface with vanished mean curvature and meeting $\partial M$ orthogonally along its boundary. Based on previous works \citelist{\cite{Pi}\cite{SS}}, Li-Zhou \cite{LZ16} proved the regularity on the free boundary, which implies the existence of free boundary minimal hypersurfaces in general compact manifolds with boundary.

Based on this regularity result, it is natural to raise a question bringing free boundary version of Yau's conjecture:
\begin{question}\label{ques:infinite many fbmhs}
Does every compact Riemannian manifold with smooth boundary of dimension $3\leq (n+1)\leq 7$ contain infinitely many free boundary minimal hypersurfaces?
\end{question}

Inspired by \citelist{\cite{MN16}\cite{IMN17}}, the author together with Guang, Li and Zhou proved the denseness of free boundary minimal hypersurfaces in compact manifolds with smooth boundary for generic metrics in \cite{GLWZ19}. Moreover, the author also proved that those free boundaries are dense in the boundary of the manifold; see \cite{Wang19_2}. In this paper, we settle Question 
\ref{ques:infinite many fbmhs} by adapting the arguments in \cite{Song18}.
\begin{theorem}
In any compact Riemannian manifold with boundary $(M^{n+1},\partial M,g)$, of dimension $3\leq (n+1)\leq 7$, there exist infinitely many almost properly embedded free boundary minimal hypersurfaces.
\end{theorem}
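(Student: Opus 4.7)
The plan is to argue by contradiction, directly adapting Song's proof of Yau's conjecture in \cite{Song18} to the free boundary setting, with the interior regularity of Pitts and Schoen-Simon replaced by the Li-Zhou regularity theorem \cite{LZ16}. Suppose that $(M^{n+1},\partial M,g)$ contains only finitely many almost properly embedded free boundary minimal hypersurfaces $\Sigma_1,\dots,\Sigma_N$. I would split the argument into two cases according to whether $(M,\partial M,g)$ satisfies a \emph{free boundary Frankel property}: every pair of almost properly embedded free boundary minimal hypersurfaces intersects.

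\textbf{Frankel case.} Here I would mirror Marques--Neves \cite{MN17}, substituting free boundary analogues at each step. Using the min-max widths $\{\omega_k(M,g)\}$ from the free boundary volume spectrum together with its Weyl law (available from the work underlying \cite{GLWZ19}) and a free boundary Lusternik--Schnirelmann inequality built from the Li-Zhou theory, one shows that under the Frankel property distinct widths are realized by distinct free boundary minimal hypersurfaces. Since the widths are unbounded by Weyl's law, this immediately produces infinitely many such hypersurfaces, contradicting the assumption.

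\textbf{Non-Frankel case.} If the Frankel property fails then two of the $\Sigma_i$ are disjoint, and by a standard mountain-pass min-max in the region they bound I extract a two-sided stable free boundary minimal hypersurface $S$ that is separating in $M$. Following Song, I would cut $M$ along $S$ and attach to each resulting piece a half-infinite cylindrical end $S\times[0,\infty)$ equipped with a carefully chosen warped metric to form a noncompact manifold $\widehat{M}$ with boundary. On $\widehat{M}$ I would run an exhaustion min-max scheme and use compactness of free boundary minimal hypersurfaces with bounded area and index to extract a limit. Iterating this cut-and-extend construction across the finitely many $\Sigma_i$ should eventually produce a new free boundary minimal hypersurface not among $\Sigma_1,\dots,\Sigma_N$, which is the desired contradiction.

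\textbf{Main obstacle.} The hard part will be executing Song's noncompact cut-and-extend scheme in the presence of $\partial M$. After cutting along $S$, each piece carries two distinct types of boundary: the original free boundary portion lying in $\partial M$, and a newly created cylindrical end whose cross section itself meets $\partial M$ orthogonally, producing a corner structure. One must (i) design the metrics on the cylindrical ends so that min-max sequences cannot drift to infinity, analogous to Song's ``good'' noncompact manifolds; (ii) guarantee that any limit varifold is a genuine almost properly embedded free boundary minimal hypersurface by combining Li-Zhou regularity near $\partial M$ with interior regularity near the ends; and (iii) prevent the limit from collapsing onto multiples of $S$, which requires a free boundary version of Song's width non-degeneracy analysis. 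Each step should follow Song's template, but each needs real work to accommodate the mixed boundary structure.
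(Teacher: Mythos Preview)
Your overall framework—argue by contradiction and adapt Song's cylindrical-ends scheme with Li–Zhou regularity—is the right one, and your Frankel case via a Marques–Neves counting argument would work (the paper actually absorbs this case into the main argument rather than treating it separately). But there is a genuine gap in your non-Frankel case that is more serious than the corner issue you flag.

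The central difficulty, which the paper isolates as its main new contribution, is that \emph{almost properly embedded} free boundary minimal hypersurfaces can have nonempty touching set $\mathrm{Int}(\Sigma)\cap\partial M$. If you cut $M$ along such a $\Sigma$, the result is not a manifold even topologically, so your step ``cut $M$ along $S$'' is not well-defined in general. Your proposal assumes $S$ is two-sided, stable, and separating, produced by a ``mountain-pass min-max in the region they bound''—but two disjoint free boundary minimal hypersurfaces need not bound any region, a mountain-pass produces an \emph{unstable} hypersurface rather than the stable barrier you want, and even area minimization gives no guarantee that the minimizer is \emph{properly} embedded. The paper's remedy is substantial: it introduces half-proper embeddings, generic separation, and proper contracting neighborhoods in one side, and then cuts in a carefully ordered sequence (one-sided properly embedded hypersurfaces with contracting neighborhoods, then two-sided properly embedded ones, then half-properly embedded ones), at each stage selecting a \emph{generic component} that remains a manifold with boundary and portion satisfying the needed structural conditions. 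This is the content of Section~\ref{subsec:constr of area minimizer} and Lemma~\ref{lem:general existence of good fbmh}, and your outline does not supply any substitute.

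Your endgame is also off. Song's mechanism is not to ``iterate cut-and-extend until a new hypersurface appears''; one cuts until the Frankel property holds in $N\setminus T$, builds $\mathcal C(N)$ once, and the contradiction is numerical: the gap $\omega_{p+1}(\mathcal C(N))-\omega_p(\mathcal C(N))\ge\Area(T_1)$, the upper bound $\omega_p\le p\,\Area(T_1)+Cp^{1/(n+1)}$, and the area lower bound $\Area(\Sigma)>\Area(T_1)$ for every free boundary minimal hypersurface in $N\setminus T$ (Lemma~\ref{lem:area lower bound}) together force $\omega_p=m_p\Area(\Sigma_p)$ with only finitely many candidate $\Sigma_p$, which contradicts Song's counting lemma. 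Your proposal does not articulate this mechanism, and without the area lower bound (which in turn depends on the careful cutting above) the argument cannot close.
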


In this paper, we also use the growth of min-max width, which was firstly studied by Gromov \cite{Gro03} and \cite{Guth09} and quantified by  Liokumovich-Marques-Neves in \cite{LMN16}. According to the regularity theory in \citelist{\cite{Pi}\cite{SS}\cite{LZ16}}, each width is associated with an almost properly embedded free boundary minimal hypersurfaces with multiplicities; see \cite{GLWZ19}*{Proposition 7.3}. If each multiplicity is one, then since the widths are a sequence of real numbers going to infinity, it would lead to a direct proof of Yau's conjecture in the generic case. This is conjectured by Marques-Neves \cite{MN16}, and has been completely proven by Zhou \cite{Zhou19} for closed manifolds; see also Chodosh-Mantoulidis \cite{CM20} for three-manifolds of the Allen-Cahn version. However, such a kind of question remains open for compact manifolds with boundary.

We also mention there are other approaching to Question \ref{ques:infinite many fbmhs} in some special compact Riemannian manifolds with boundary. In the three dimensional round ball $\mb B^3$, Fraser-Schoen \cite{FrSch16} obtained the free boundary minimal surface with genus 0 and arbitrary many boundary components. By desingularization of the critical catenoid and the equatorial disk, Kapouleas-Li \cite{KL17} constructed infinitely many new free boundary minimal surfaces which have large genus in $\mb B^3$. We refer to \cite{Li19} for more results in $\mb B^3$.

\vspace{0.5em}
\subsection{Difficulties}
Compared to closed manifolds, the new main challenge is that in compact Riemannian  manifolds with boundary, the free boundary minimal hypersurfaces may have non-empty {\em touching sets} (see Definition \ref{def:touching}). Such touching phenomena always bring the main difficulties in the study of related problems; see \citelist{\cite{LZ16}\cite{ZZ17}\cite{ZZ18}\cite{GWZ_2}\cite{GWZ18}\cite{GLWZ19}\cite{Wang19}}. Precisely, if cutting a manifold along an almost free boundary minimal hypersurface with non-empty touching set, the result would never be a manifold even in the topological sense. In this paper, we come up with several new concepts (see Section \ref{sec:pre for fbmh}) and develop the ``embedded Frankel property'' in several ways (see Subsection \ref{subsec:constr of area minimizer} and Theorem \ref{thm:infitely many fbmhs}) which may be helpful in the further studies.

\medskip
Another challenge is the regularity of free boundary minimal hypersurfaces produced by min-max theory in compact manifolds whose boundaries are not smooth. We mention that there is no such regularity even for minimizing problems, which would be quite crucial for the smoothness of {\em replacements} (see \cite{LZ16}*{Proposition 6.3}). Nevertheless, we get the full regularity in our situation (see Theorem \ref{thm:min-max for Nepsilon}) by noticing that Li-Zhou's \cite{LZ16} result holds true for all smooth boundary points.

\vspace{0.5em}
\subsection{Outline of the proof}
Let $(M^{n+1},\partial M,g)$ be a compact Riemannian manifold with non-empty boundary, of $3\leq (n+1)\leq 7$. Assume that $(M,\partial M,g)$ contains only finitely many almost properly embedded free boundary minimal hypersurfaces. Borrowing the idea from Song \cite{Song18}, we notice that there are two key points:
\begin{itemize}
\item cutting along stable free boundary minimal hypersurfaces to get a connected component $N$ so that the free boundary minimal hypersurfaces in $N\setminus T$ (here $T$ is the new boundary part from cutting process) satisfy the Frankel property;
\item producing almost properly embedded free boundary minimal hypersurfaces in $N\setminus T$  by using min-max theory for $\mc C(N)$, which is a non-compact manifold by gluing to $N$ the cylindrical manifold $T\times [0,+\infty)$ under the conformal metric.
\end{itemize}

For the first part, we have to cut along the improper hypersurfaces, which would never lead the new thing to be a manifold even in the topological sense. To overcome this, we choose an order of those hypersurfaces carefully so that every time there is a connected component which is a compact manifold with piecewise smooth boundary satisfying our condition. Precisely, we cut along stable, properly embedded free boundary minimal hypersurfaces first and take a connected component $(N_1,\partial N_1, T_1,g)$ ($T_1$ is the new boundary part from cutting process) so that there is no stable properly embedded one in $N_1\setminus T_1$. Then each almost properly embedded free boundary minimal hypersurfaces in $N_1\setminus T_1$ {\em generically separates} $N_1$ (see Subsection \ref{subsec:constr of area minimizer}). If $N_1$ doesn't satisfy the Frankel property, then we prove that there exists a free boundary minimal hypersurface $\Sigma$ so that one of the connected component of $T_1\setminus \Sigma$ is good enough for us; see Lemma \ref{lem:general existence of good fbmh}.

\medskip
For the second part, we approach $\mc C(N)$ by a sequence of compact manifold with piecewise smooth boundary $N_\epsilon$. The key observation is that Li-Zhou's regularity holds true for all smooth boundary points. Hence we can use the monotonicity formula \citelist{\cite{GLZ16}*{Theorem 3.4}\cite{Si}*{\S 17.6}} to show that for any $p$ fixed, any $\epsilon>0$ small enough, the width $\omega_p(N_\epsilon)$ is associated with a properly embedded free boundary minimal hypersurface whose boundary lies on $N_\epsilon\cap \partial M$; see Theorem \ref{thm:min-max for Nepsilon} for details.

\medskip
This paper is organized as follows. In Section \ref{sec:pre for fbmh}, we give basic definitions and prove a generalized Frankel property for free boundary minimal hypersurfaces in the end. Then in Section \ref{sec:confined min-max theory}, we prove a min-max theory for a non-compact manifold with boundary. Finally, we prove the main theorem in Section \ref{sec:proof of main thm}. In Appendix \ref{sec:app:MP}, we state a strong maximum principle for stationary varifolds in compact manifolds with boundary and also sketch the proof. Appendix \ref{sec:app:Vinfty is stationary} contains the collection of the calculation in Theorem \ref{thm:min-max for cmpt with ends}.

\vspace{1em}
\subsection*{Acknowledgments:} The author would like to thank Prof. Xin Zhou for sharing his insights in minimal surfaces to me and many helpful discussion. The author would also like to thank Antoine Song for his explanation on \citelist{\cite{Song18}*{Lemma 10}\cite{BBN10}*{Proposition 5}}.

\section{Preliminary for free boundary minimal hypersurfaces}\label{sec:pre for fbmh}
In this section, we give the basic notations and some lemmas about constructing area minimizers in compact manifolds with boundary. 

Throughout this paper, $(M^{n+1},\partial M,g)$ is always a compact Riemannian manifold with smooth boundary and $3\leq (n+1)\leq 7$. Generally, $(M,\partial M,g)$ can be regarded as a domain of a closed Riemannian manifold $(\wti M,g)$. We also need to consider compact manifold with piecewise smooth boundary.
\begin{definition}[\cite{GWZ_2}*{Definition 2.2}]
\label{D:manifold with boundary and portion}
For a manifold with piecewise smooth boundary, $N$ is called a \emph{manifold with boundary $\partial N$ and portion $T$} if
\begin{itemize}
  \item $\partial N$ and $T$ are smooth, which may be disconnected;
  \item $\partial N\cup T$ is the (topological) boundary of $N$ and $\partial N\cap T=\partial T$.
\end{itemize}
We will denote it by $(N,\partial N,T)$.
\end{definition}

We remark that in the above definition, the interior of $\partial N$ and $T$ are disjoint.

\begin{definition}[\cite{LZ16}*{Definition 2.6}]\label{def:touching}
Let $(N^{n+1},\partial N,T,g)$ be a compact Riemannian manifold with boundary and portion. Let $\Sigma^n$ be a smooth $n$-dimensional manifold with boundary $\partial \Sigma$. We say that a smooth embedding $\phi: \Sigma\to N$ is an \emph{almost properly embedding} of $\Sigma$ into $N$ if $\phi(\Sigma)\subset N$ and $\phi(\partial \Sigma)\subset \partial N$. We say that $\Sigma$ is an {\em almost properly embedded hypersurface} in $N$.
\end{definition}

For an almost properly embedded hypersurface $(\Sigma, \partial\Sigma)$, we allow the interior of $\Sigma$ to touch $\partial N$. That is to say: $\mathrm{Int}(\Sigma)\cap \partial N$ may be non-empty.  We usually call $\mathrm{Int}(\Sigma)\cap \partial N$ the {\em touching set} of $\Sigma$.

\begin{definition}[\cite{LZ16}*{Section 2.3}]
Let $(\Sigma,\partial \Sigma)$ be an almost properly embedded hypersurface in $(N,\partial N,T,g)$. Then $\Sigma$ is called a {\em free boundary minimal hypersurface} if the mean curvature vanishes everywhere and $\Sigma$ meets $\partial N$ orthogonally along $\partial \Sigma$. 

We also use the term of {\em free boundary hypersurface} if $\Sigma$ only meets $\partial N$ orthogonally along $\partial \Sigma$.
\end{definition}

In this paper, we also need to deal with free boundary hypersurfaces which have touching sets from only one side.
\begin{definition}\label{def:half-properly embedded}
A two-sided embedded free boundary hypersurface $(\Sigma,\partial \Sigma)$ in $(N,\partial N,T,g)$ is {\em half-properly embedded} if it is almost properly embedded and has a unit normal vector field $\n$ so that $\n=\nu_{\partial M}$ along the touching set of $\Gamma$.
\end{definition}

\subsection{Neighborhoods foliated by free boundary hypersurfaces }
Given a metric on $N$, $(N,\partial N,T,g)$ can always be isometrically embedded into a compact Riemannian manifold with smooth boundary $(M,\partial M,g)$. Also, we embed $(M,\partial M,g)$ isometrically into a smooth Riemannian manifold $(\wti M,g)$ which has the same dimension with $M$ and $N$. Let $\Gamma$ be a two-sided, almost properly embedded, free boundary hypersurface in $(N,\partial N,T,g)$, Then $X\in\mathfrak X(\wti M)$ is called {\em an admissible vector field on $\wti M$ for $\Gamma$} if $X|_\Gamma$ is a normal vector field of $\Gamma$ and $X(p)\in T_p(\partial M)$ for $p$ in some neighborhood of $\partial \Gamma$ in $\partial M$. Note that such an admissible vector field is always associated with a family of diffeomorphisms of $\wti M$.
\begin{lemma}\label{lem:good neighborhood for non-degenerate}
Let $\Gamma$ be an almost properly embedded, two-sided non-degenerate free boundary minimal hypersurface in $(N,\partial N,T,g)$ and $\n$ a choice of unit normal vector on $\Gamma$. Let $\{\Phi(\cdot,t)\}_{-1\leq t\leq 1}$ be a family of diffeomorphisms of $\wti M$ associated to an admissible vector field on $\wti M$ for $\Gamma$ so that $\frac{\partial \Phi(x,t)}{\partial t}|_{t=0,x\in \Gamma}=\n(x)$. Then there exist a positive number $\delta_1$ and a smooth map $w:\Gamma\times(-\delta_1,\delta_1)\rightarrow \mb R$ with the following properties:
\begin{enumerate}
\item for each $x\in\Gamma$, we have $w(x, 0) = 0$ and $\phi:=\frac{\partial}{\partial t}w(x, t)|_{t=0}$ is a positive function which is the first eigenfunction of the second variation of area on $\Gamma$;
\item for each $t\in(-\delta_1 , \delta_1 )$, we have $\int_{\Gamma}(w(\cdot,t) -t\phi )\phi = 0$;
\item for each $t\in (-\delta_1 ,\delta_1)\setminus \{0\}$, $\{\Phi(x, w(x, t)): x \in\Gamma \}$ is an embedded hypersurface in $\wti M$ with free boundary on $\partial M$ and mean curvature either positive or negative.
\end{enumerate}
\end{lemma}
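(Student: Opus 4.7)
The plan is to realize the desired family $\Gamma_t := \{\Phi(x, w(x,t)) : x \in \Gamma\}$ via the implicit function theorem in Hölder spaces, as a one-parameter family of free boundary hypersurfaces whose mean curvatures are all proportional to the first eigenfunction $\phi$. Let $L_\Gamma$ denote the Jacobi operator on $\Gamma$ together with the linearized free boundary (Robin) condition on $\partial\Gamma$. By hypothesis $L_\Gamma$ has trivial kernel; let $\lambda_1 \neq 0$ be the first eigenvalue with positive eigenfunction $\phi$, normalized so that $L_\Gamma \phi = \lambda_1 \phi$.

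First I would form the nonlinear operator $u \mapsto H(\Gamma_u)$ pulled back to $\Gamma$, where $\Gamma_u := \{\Phi(x, u(x)) : x \in \Gamma\}$ and $u$ is constrained (in $C^{2,\alpha}$) to satisfy the linearized free boundary condition. Its differential at $u = 0$ is $L_\Gamma$. Using the $L^2$-orthogonal splitting $C^{2,\alpha}(\Gamma) = \mathbb{R}\phi \oplus \phi^\perp$, I would define
\[
G(v, t) := P_{\phi^\perp}\bigl(H(\Gamma_{t\phi + v})\bigr), \qquad v \in \phi^\perp,\ t \in \mathbb{R},
\]
where $P_{\phi^\perp}$ is the $L^2$-projection onto $\phi^\perp$. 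Since $\phi$ is an eigenvector of the self-adjoint operator $L_\Gamma$, the subspace $\phi^\perp$ is invariant, and non-degeneracy gives that $D_v G|_{(0,0)} = L_\Gamma|_{\phi^\perp}$ is an isomorphism. The implicit function theorem then produces a smooth curve $t \mapsto v(t) \in \phi^\perp$, defined for $|t| < \delta_1$, with $v(0) = 0$ and $G(v(t), t) \equiv 0$. Setting $w(x, t) := t\phi(x) + v(x, t)$ gives the desired function.

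The verification of the three properties is then routine. For (1), $w(\cdot, 0) = 0$ is clear; differentiating $G(v(t), t) = 0$ at $t = 0$ and using $\partial_t G|_{(0,0)} = P_{\phi^\perp}(L_\Gamma \phi) = 0$ forces $v'(0) = 0$, hence $\partial_t w|_{t=0} = \phi$. For (2), $\int_\Gamma (w(\cdot,t) - t\phi)\phi = \int_\Gamma v(t)\phi = 0$ holds by construction. For (3), $G \equiv 0$ means $H(\Gamma_t)$ is proportional to $\phi$, say $H(\Gamma_t) = c(t)\phi$; Taylor expansion in $t$ gives $c(t) = \lambda_1 t + O(t^2)$, which is nonzero for small $t \neq 0$, so $H$ has constant nonzero sign on $\Gamma_t$ because $\phi > 0$.

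The main technical point I expect is encoding the free boundary condition on the perturbed hypersurface in a way compatible with this linear setup. One needs to arrange the admissible vector field generating $\Phi$ so that $\Phi(\cdot, t)$ maps $\partial M$ into $\partial M$ near $\partial\Gamma$ (keeping $\partial\Gamma_u \subset \partial M$ automatically), and then the orthogonality of $\Gamma_u$ with $\partial M$ linearizes at first order to a Robin condition on $u|_{\partial\Gamma}$ which is built into the domain of $L_\Gamma$. The presence of a touching set of $\Gamma$ is accommodated by working in the ambient manifold $\wti M$, as the statement already arranges; at such points $\Phi$ may push $\Gamma_u$ across $\partial M$, but this does not affect the calculations above since the boundary conditions only constrain $u$ along $\partial\Gamma$.
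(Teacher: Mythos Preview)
Your overall strategy matches the paper's one-line indication (``follows from the implicit function theorem'') and the detailed proof given for the degenerate analogue (Lemma~\ref{lemma:good neighborhood}): split off the $\phi$-direction, solve for the complementary part by the implicit function theorem, and read off properties (1)--(3). Your verifications of (1), (2), and the sign analysis $c(t)=\lambda_1 t+O(t^2)$ for (3) are correct once the construction is in place.

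There is, however, a genuine gap in how you treat the free boundary condition. Your map $G(v,t)=P_{\phi^\perp}\bigl(H(\Gamma_{t\phi+v})\bigr)$ controls only the mean curvature; you attempt to handle the orthogonality to $\partial M$ by restricting the domain to functions satisfying the \emph{linearized} Robin condition. But a linear constraint on the domain cannot enforce the nonlinear orthogonality of the graph: with your setup, $\Gamma_{t\phi+v(t)}$ meets $\partial M$ at an angle $\pi/2+O(t^2)$ rather than exactly $\pi/2$, so the ``free boundary'' assertion in (3) fails as stated.

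The fix, visible in the paper's proof of the degenerate case, is to carry the full nonlinear boundary condition as a second component of the map rather than as a restriction on the domain. One works with
\[
\Psi:\phi^\perp\times\mathbb R\longrightarrow \phi^\perp\times C^{1,\alpha}(\partial\Gamma),\qquad
\Psi(f,t)=\Bigl(P_{\phi^\perp}H(\Gamma_{f+t\phi}),\ \langle \n_{\Gamma_{f+t\phi}},\nu_{\partial M}\rangle\big|_{\partial\Gamma}\Bigr),
\]
where $\phi^\perp$ is defined only by the $L^2$-condition $\int_\Gamma f\phi=0$ (no boundary condition imposed). The partial derivative $D_f\Psi|_{(0,0)}$ is the pair $(P_{\phi^\perp}\circ L,\,B)$ with $B$ the Robin boundary operator; since $(L,B)$ has trivial kernel by non-degeneracy and $\phi$ is an eigenfunction satisfying $B\phi=0$, this restriction to $\phi^\perp$ is an isomorphism. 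The implicit function theorem then produces $w(\cdot,t)=t\phi+f(t)$ so that $\Gamma_t$ has both $H$ proportional to $\phi$ and exact free boundary, after which your arguments for (1)--(3) go through unchanged.
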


Lemma \ref{lem:good neighborhood for non-degenerate} follows from the implicit function theorem. With more effort, we have a similar result for degenerate stable free boundary minimal hypersurfaces.
\begin{lemma}\label{lemma:good neighborhood}
Let $\Gamma$ be an almost properly embedded, two-sided degenerate stable free boundary minimal hypersurface in $(M,\partial M, g)$ and $\n$ a choice of unit normal vector on $\Gamma$. Let $\{\Phi(\cdot,t)\}_{-1\leq t\leq 1}$ be a family of diffeomorphisms of $\wti M$ associated to an admissible vector field on $\wti M$ for $\Gamma$ so that $\frac{\partial \Phi(x,t)}{\partial t}|_{t=0,x\in \Gamma}=\n(x)$. Then there exist a positive number $\delta_1$ and a smooth map $w:\Gamma\times(-\delta_1,\delta_1)\rightarrow \mb R$ with the following properties:
\begin{enumerate}
\item\label{item:derivative} for each $x\in\Gamma$, we have $w(x, 0) = 0$ and $\phi:=\frac{\partial}{\partial t}w(x, t)|_{t=0}$ is a positive function in the kernel of the Jacobi operator of $\Gamma$;
\item \label{item:orthogonal to phi} for each $t\in(-\delta_1 , \delta_1 )$, we have $\int_{\Gamma}(w(\cdot,t) -t\phi )\phi = 0$;
\item\label{item:each slice cmc} for each $t\in (-\delta_1 ,\delta_1)$, $\{\Phi(x, w(x, t)): x \in\Gamma \}$ is an embedded hypersurface in $\wti M$ with free boundary on $\partial M$ and mean curvature either positive or negative or identically zero.
\end{enumerate}
\end{lemma}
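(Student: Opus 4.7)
The plan is to run a Lyapunov--Schmidt reduction in the spirit of Lemma~\ref{lem:good neighborhood for non-degenerate}, but working modulo the one-dimensional kernel of the Jacobi operator which appears because $\Gamma$ is degenerate. First I would reformulate the problem as a PDE: for a small function $w:\Gamma\to\R$, set $\Gamma_w:=\{\Phi(x,w(x)):x\in\Gamma\}$ and let $\mc H(w)$ denote the mean curvature of $\Gamma_w$, pulled back to $\Gamma$. Because $\Phi$ comes from an admissible vector field on $\wti M$, the slice $\Gamma_w$ meets $\partial M$ orthogonally precisely when $w$ satisfies a Robin-type condition $\mc B(w)=0$ along $\partial\Gamma$ coming from $\mathrm{II}_{\partial M}$. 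Linearizing the pair $(\mc H,\mc B)$ at $w=0$ recovers the free-boundary Jacobi operator $L_\Gamma$; by the degenerate-stable hypothesis, $L_\Gamma$ is self-adjoint and non-negative with a one-dimensional kernel spanned by a positive eigenfunction $\phi$ (positivity comes from the standard Perron argument for the lowest eigenfunction), normalized so that $\int_\Gamma \phi^2=1$.

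Next, parametrize the unknown by $w(x,t)=t\phi(x)+v(x,t)$ with $v(\cdot,t)\in\phi^\perp\subset L^2(\Gamma)$, which immediately gives item~(\ref{item:orthogonal to phi}). Let $P$ be $L^2$-projection onto $\phi$ and split $\mc H(w)=c\phi+\mc H^\perp(w)$. On suitable H\"older subspaces incorporating $\mc B$, the restriction $(I-P)L_\Gamma:\phi^\perp\cap C^{2,\alpha}\to\phi^\perp\cap C^{0,\alpha}$ is an isomorphism by the spectral theorem for self-adjoint elliptic boundary value problems. Applying the implicit function theorem to $(t,v)\mapsto \mc H^\perp(t\phi+v)$ at $(0,0)$ produces, for some $\delta_1>0$, a unique smooth family $v(\cdot,t)$ with $v(\cdot,0)\equiv 0$, $\mc B(t\phi+v(\cdot,t))=0$, and
\[
\mc H\bigl(t\phi+v(\cdot,t)\bigr)=c(t)\,\phi
\]
for a smooth scalar $c:(-\delta_1,\delta_1)\to\R$ with $c(0)=0$. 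Since $\phi>0$ on $\Gamma$, this identity says that on each slice $\Gamma_{w(\cdot,t)}$ the mean curvature has a fixed sign --- positive if $c(t)>0$, negative if $c(t)<0$, and identically zero if $c(t)=0$ --- yielding item~(\ref{item:each slice cmc}).

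For item~(\ref{item:derivative}), differentiate the identity $\mc H^\perp(t\phi+v(\cdot,t))\equiv 0$ at $t=0$ to obtain $(I-P)L_\Gamma(\phi+\partial_t v|_{t=0})=0$. Because $L_\Gamma\phi=0$ this reduces to $(I-P)L_\Gamma(\partial_t v|_{t=0})=0$, and since $\partial_t v|_{t=0}\in\phi^\perp$ while $(I-P)L_\Gamma$ is injective there, we conclude $\partial_t v|_{t=0}=0$; hence $\partial_t w(\cdot,0)=\phi$ is a positive element of $\ker L_\Gamma$, as required. After shrinking $\delta_1$ one further arranges that each $\Gamma_{w(\cdot,t)}$ is an embedded hypersurface of $\wti M$ and lies in a tubular neighborhood of $\Gamma$.

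The main obstacle is the functional-analytic bookkeeping at the boundary: one must install the correct Robin condition $\partial_\nu w+\mathrm{II}_{\partial M}(\n,\n)\,w=0$ on H\"older subspaces, verify that $L_\Gamma$ with this condition is Fredholm and self-adjoint with exactly the stated kernel, and check that the possibly non-empty touching set of $\Gamma$ causes no difficulty. The last point is mild: since $\Phi$ is a diffeomorphism of the ambient $\wti M$, the mean curvature of $\Gamma_w$ is computed intrinsically in $\wti M$ and the touching set plays no role away from $\partial\Gamma$; the free-boundary regularity of $\Gamma$ up to $\partial\Gamma$ supplied by Li--Zhou then ensures that Schauder estimates and the implicit function theorem apply in the usual way.
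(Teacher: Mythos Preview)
Your proposal is correct and follows the same Lyapunov--Schmidt reduction as the paper: write $w=t\phi+v$ with $v\perp\phi$, apply the implicit function theorem to solve for $v(\cdot,t)$, and differentiate at $t=0$ to get $\partial_t v|_{t=0}=0$. The only cosmetic difference is that the paper projects $H$ by subtracting its average and dividing by $\phi$ (so the resulting slices are in fact \emph{constant}-mean-curvature) rather than projecting orthogonally to $\phi$ in $L^2$ as you do, and it carries the boundary operator explicitly as the second component of the map $\Psi$, which is precisely the bookkeeping you flag in your final paragraph.
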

\begin{proof}
The proof here is similar to \citelist{\cite{BBN10}*{Proposition 5}\cite{Song18}*{Lemma 10}}.

Denote the space
\[Y:=\{f\in C^\infty(\Gamma):\int_{\Gamma}f\phi=0\}.\]
Define a map $\Psi:Y\times \mb R\rightarrow Y\times C^{\infty}(\partial \Gamma)$ by 
\[\Psi(f,t)=\Big(\phi^{-1}[H(\Phi(x,f+t\phi))-\frac{1}{\Area(\Gamma)}\int_{\Gamma}H(\Phi(x,f+t\phi))],\langle \n(\Phi(x,f+t\phi)),\nu_{\partial M}\rangle|_{\partial \Gamma}\Big).\]
Then the first derivative (see \cite{GWZ_2}*{Lemma 2.5}) is 
\[D\Psi_{(0,0)}(f,0)=\Big(\phi^{-1}(Lf-\frac{1}{\Area(\Gamma)}\int_{\Gamma}Lf), fh^{\partial M}(\n,\n)-\langle\nabla f,\nu_{\partial M}\rangle|_{\partial \Gamma}\Big)\]
Here $L=\Delta+\Ric (\n,\n)+|A|^2$ is the Jacobi operator. Hence $D_1\Psi_{(0,0)}f=0$ is equivalent to $Lf=c$ and $\frac{\partial f}{\partial \eta}=h^{\partial M}(\n,\n)f$ (where $\eta$ is the co-normal of $\Gamma$), which implies that $f=0$. Then by the implicit function theorem, for each $t\in (-\delta_1,\delta_1)$, there exists a function $u(\cdot, t)\in Y$ so that $\Psi(u,t)=(0,0)$. Now define $w(x,t)=u(x,t)+t\phi$. Clearly, $w$ satisfies \eqref{item:orthogonal to phi} and \eqref{item:each slice cmc}.

It remains to verify \eqref{item:derivative}. Indeed, according to the implicit function theorem, we also have
\[ D_1\Psi (\frac{\partial u}{\partial t})\Big|_{(0,0)}+D_2\Psi(\ppt)\Big|_{(0,0)}=0. \]
By the direct computation, $D_2\Psi(\ppt)\big|_{(0,0)}=0$. Recall that $D_1\Psi\big|_{(0,0)}$ is nondegenerate. Hence $\frac{\partial u}{\partial t}\Big|_{t=0}=0$, which implies the desired result.
\end{proof}

Let $S$ be a two-sided free boundary minimal hypersurface in an $(n+1)$-dimensional compact manifold $(\hat M,\partial\hat M)$ (possibly with portion). Let $\wti M$ be a closed Riemannian manifold so that $\hat M$ is a compact domain of $\wti M$. Let $\mu > 0$; consider a neighborhood $\mc N$ of $S$ in $\wti M$ and a diffeomorphism
\[\wti F: S\times (-\mu,\mu)\rightarrow \mc N\]
such that $\wti F(x,0) = x$ for $x\in S$. We define the following (cf. \cite{Song18}*{Section 3}):
\begin{itemize}
\item $S$ has a {\em contracting neighborhood} if there are such $\mu,\mc N$ and $\wti F$ such that for all $t\in [-\mu,\mu]\setminus \{0\}$, $\wti F(S\times\{t\})$ has free boundary and mean curvature vector pointing towards $S$;
\item $S$ has an {\em expanding neighborhood} if $S$ is unstable or there are such $\mu,\mc N$ and $\wti F$ such that for all $t\in [-\mu,\mu]\setminus \{0\}$, $\wti F(S\times\{t\})$ has free boundary and mean curvature vector pointing away from $S$;
\item $S$ has a {\em mixed neighborhood} if there are such $\mu,\mc N$ and $\wti F$ such that for all $t\in [-\mu,0)$ (resp. $t\in(0,\mu]$), $\wti F(S\times\{t\})$ has free boundary and mean curvature vector pointing away from (resp. pointing towards) $S$;
\item $S$ has a {\em contracting neighborhood in one side} if there are such $\mu$, $\mc N$ and $\wti F$ such that for all $t\in (0, \mu]$, $\wti F(S\times\{t\})$ has free boundary and mean curvature vector pointing towards $S$; such a neighborhood in one side is said to be {\em proper} if $\wti F(S\times\{t\})\subset \hat M$ for $t\in(0,\mu)$;
\item $S$ has an {\em expanding neighborhood in one side} if $S$ is unstable or there are such $\mu$, $\mc N$ and $\wti F$ such that for all $t\in (0, \mu]$, $\wti F(S\times\{t\})$ has free boundary and mean curvature vector pointing away from $S$; such a neighborhood in one side is said to be {\em proper} if $\wti F(S\times\{t\})\subset \hat M$ for $t\in (0,\mu)$.
\end{itemize}

Let $S$ be a one-sided free boundary minimal hypersurface in $(\hat M,\partial \hat M,g)$. Denote by $\wti S$ the double cover of $S$. Consider the double cover $(M',\partial M',g')$ of $(\hat M,\partial \hat M,g)$ so that $\wti S$ is a two-sided free boundary minimal hypersurface in it. Then we say that $S$ has {\em a contracting (resp. an expanding) neighborhood} if $\wti S$ has a contracting (resp. an expanding) neighborhood.

\begin{remark}\label{rmk:not contracting and proper in one side}
Let $S$ be a two-sided free boundary minimal hypersurface and $\wti F$ be the diffeomorphism as above. $S$ is called to have {\em no}  proper contracting neighborhood in one side provided that each neighborhood in one side is not contracting or non-proper, i.e. there exist two sequences of real numbers $t_i^+\rightarrow 0^+$ and $t_i^-\rightarrow 0^-$ so that for each $t_i=t_i^+$ or $t_i^-$,
\begin{itemize}
	\item either $\wti F(S\times\{t_i\})$ has mean curvature vector pointing away from $S$;
	\item or $\wti F(S\times\{ t_i \})\setminus \hat M\neq \emptyset$.
\end{itemize}
\end{remark}

\subsection{Construction of area minimizers}\label{subsec:constr of area minimizer}
Let $(N,\partial N,T,g)$ be a connected compact manifold with  boundary and portion.  Let $(\Sigma,\partial\Sigma)$ be an almost properly embedded hypersurface in $(N,\partial N,T,g)$. Recall that {\em $\Sigma$ generically separates $N$} (see \cite{GWZ_2}*{Section 5}) if  there is a cut-off function $\phi$ defined on $\Sigma$ satisfying the following:
\begin{itemize}
\item $\phi$ is compactly supported in $\Sigma\setminus \partial \Sigma$ such that $\langle \phi\n,\nu_{\partial M}\rangle <0$ on the touching set, where $\n$ is the normal vector field of $\Sigma$; 
\item $\Sigma_{t\phi}:=\{\mathrm{exp}_x(t\phi\n):x\in\Sigma\}$ separates $N$ for all sufficiently small $t>0$.
\end{itemize} 

If $\Sigma$ generically separates $N$, then $N\setminus \Sigma$ can be divided into two part by the signed distance function to $\Sigma$. These two parts are called the {\em generic components}. 

\medskip
In this section, we consider the following conditions of $(N,\partial N,T,g)$:
\begin{enumerate}[A)]
\item\label{assump:contracting portion} the portion $T$ is a free boundary minimal hypersurface in $(N,\partial N,g)$ and has a contracting neighborhood in one side;
\item\label{assump:generic sparation} each two-sided free boundary minimal hypersurface generically separates $N$; 
\item\label{assump:regular neighborhood} any properly embedded, two-sided, free boundary minimal hypersurface in $N\setminus T$ has a neighborhood which is either contracting or expanding or mixed;
\item\label{assump:half regular neighborhood} any half-properly embedded, two-sided, free boundary minimal hypersurface in $N\setminus T$ has a proper neighborhood \underline{in one side} which is either contracting or expanding;
\item\label{assump:one sided expande} each properly embedded, one-sided, free boundary minimal hypersurface has an expanding neighborhood;
\item\label{assump:at most one minimal component} at most one connected component of $\partial N$ is a closed minimal hypersurface, and if it happens, it has an expanding neighborhood in one side in $N$.
\end{enumerate}

Let $\Gamma_1$ and $\Gamma_2$ be two disjoint, connected free boundary minimal hypersurface in $(N,\partial N,T,g)$ with $\Gamma_j\subset N\setminus T$ ($j=1,2$).
\begin{proposition}\label{prop:two-sided:non-degenerate}
Assume that $(N,\partial N,T,g)$ satisfies \eqref{assump:contracting portion}, \eqref{assump:generic sparation} and \eqref{assump:at most one minimal component}. Suppose that $\Gamma_j$ ($j=1,2$) is two-sided, non-degenerate and has no proper contracting neighborhood in one side (see Remark \ref{rmk:not contracting and proper in one side}). Then there exists a properly embedded free boundary minimal hypersurface in $N\setminus T$ which is an area minimizer.
\end{proposition}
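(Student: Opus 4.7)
The plan is to mirror Song's closed-manifold argument \cite{Song18}*{\S 3} by carrying out a constrained area minimization in the region between $\Gamma_1$ and $\Gamma_2$, using the ``no proper contracting neighborhood in one side'' hypothesis on each $\Gamma_j$ together with Li--Zhou's free boundary regularity \cite{LZ16} to produce a properly embedded free boundary minimal hypersurface in $N\setminus T$ that is strictly separated from $\Gamma_1\cup\Gamma_2\cup T$. First, by \eqref{assump:generic sparation}, each $\Gamma_j$ generically separates $N$, so I fix a connected component $W$ of $N\setminus(\Gamma_1\cup\Gamma_2)$ whose topological boundary meets both $\Gamma_1$ and $\Gamma_2$. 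Applying Lemma~\ref{lem:good neighborhood for non-degenerate} to each $\Gamma_j$ yields a smooth foliation of a neighborhood by free boundary hypersurfaces with mean curvature of constant sign. The hypothesis together with Remark~\ref{rmk:not contracting and proper in one side} then produces, on the side of $\Gamma_j$ facing $W$, a sequence of slices $\Sigma_j^{t_i}$ with $t_i\downarrow 0^+$ each of which is either (a) contained in $W$ with $\vec H$ pointing away from $\Gamma_j$, or (b) not contained in $N$. In case (a) the slice $\Sigma_j^{t_i}$ is a strict mean-convex barrier sitting inside $W$; in case (b) truncating the slice to $N$ and closing it off by a portion of $\partial N$ yields a competitor whose relative perimeter in $\mathrm{Int}(M)$ is strictly less than $\Area(\Gamma_j)$. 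Hypothesis \eqref{assump:contracting portion} supplies an analogous barrier from the $N$-side of $T$, and \eqref{assump:at most one minimal component} supplies a barrier along any closed minimal component of $\partial N$.

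Next, I minimize the relative perimeter in $\mathrm{Int}(M)$ over the class $\mathcal C$ of Caccioppoli sets $E\subset W$ that contain a prescribed thin collar of $\Gamma_1$ on the $W$-side and are disjoint from a prescribed thin collar of $\Gamma_2$ on the $W$-side. Standard GMT compactness produces a minimizer $E_\ast\in\mathcal C$, and Li--Zhou's boundary regularity, which, as emphasized in the introduction, holds at every smooth boundary point, together with the interior theory of Pitts--Schoen--Simon and the dimension bound $n+1\leq 7$, upgrades $\Sigma:=\partial^\ast E_\ast\cap\mathrm{Int}(M)$ to a smooth, almost properly embedded free boundary minimal hypersurface meeting $\partial N$ orthogonally along $\partial\Sigma$. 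It then remains to prove that $\Sigma$ is disjoint from $\Gamma_1\cup\Gamma_2\cup T$, avoids any closed minimal component of $\partial N$, and is properly embedded. In case (a) above, the maximum principle of Appendix~\ref{sec:app:MP} applied across $\Sigma_j^{t_i}$ forces $\Sigma$ onto the far side of $\Sigma_j^{t_i}$, hence gives $\Sigma\cap\Gamma_j=\emptyset$; the analogous arguments with \eqref{assump:contracting portion} and \eqref{assump:at most one minimal component} rule out contact with $T$ and with any closed minimal boundary component. In case (b), the truncated-slice competitors give $\Area(\Sigma)<\Area(\Gamma_j)$, which rules out coincidence of $\Sigma$ with $\Gamma_j$; a tangential contact point is then excluded because the strong maximum principle would otherwise force local agreement, and hence, by unique continuation, global agreement, contradicting the strict area drop. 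Finally, properness of $\Sigma$ follows from \eqref{assump:generic sparation} applied to $\Sigma$ together with the same local barrier argument near any putative interior touching point with $\partial N$.

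The main obstacle is case (b) of the barrier construction: the truncated slice is not itself a smooth free boundary hypersurface, so extracting the strict inequality $\inf_{\mathcal C} < \Area(\Gamma_j)$ requires a careful calibration using the foliation of Lemma~\ref{lemma:good neighborhood} together with the geometry of the touching set of $\Gamma_j$ with $\partial N$, in the spirit of \cite{Song18}*{Lemma 10} but adapted to the free boundary setting. This is also where the free boundary argument diverges substantively from Song's closed version, and is precisely the step where Li--Zhou's boundary regularity and the appendix's maximum principle must be deployed in tandem.
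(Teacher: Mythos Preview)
Your case (b) contains a genuine gap. Take the sub-case where the $W$-side of $\Gamma_j$ is contracting (so $\Gamma_j$ is strictly stable with first eigenvalue $\lambda_1>0$) and non-proper. Then
\[
\Area(\Sigma_j^t)=\Area(\Gamma_j)+\tfrac{1}{2}\lambda_1\|\phi\|_{L^2}^2\,t^2+O(t^3),
\]
while near an isolated interior touching point with nondegenerate tangency the portion of $\Sigma_j^t$ lying outside $N$ is an approximate $n$-disk of radius $\sim\sqrt t$, so $\Area(\Sigma_j^t\setminus N)\sim t^{n/2}$. For $n\ge 5$ (ambient dimension $n+1\in\{6,7\}$, well within the range of the paper) one has $t^{n/2}=o(t^2)$, hence $\Area(\Sigma_j^t\cap N)>\Area(\Gamma_j)$ for all small $t>0$: your truncated competitor does \emph{not} beat $\Gamma_j$, and no ``careful calibration'' can repair what is a dimension obstruction. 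The subsequent maximum-principle step (``tangential contact forces global agreement, contradicting the strict area drop'') then has nothing to feed on, and the minimizer may simply coincide with $\Gamma_j$.

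The paper avoids this by never trying to produce a barrier on the $W$-side in the contracting situation. Non-degeneracy already forces each $\Gamma_j$ to have a genuine two-sided neighborhood that is either expanding or contracting (Lemma~\ref{lem:good neighborhood for non-degenerate}), and the proof case-splits accordingly. In the expanding case one \emph{shrinks} $N'$ by excising the slab $\wti F^j(\Gamma_j\times[0,\epsilon))$ and uses the leaf at $t=\epsilon$ as a mean-convex barrier; even when that leaf exits $N$, it still functions as a barrier because its interior meets $N'$ at angle less than $\pi/2$ (see the Remark following the proposition and \cite{GWZ_2}*{Lemma 4.13}) --- a point your case (a) does not address either. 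In the contracting case one instead \emph{enlarges} $N'$ across $\Gamma_j$ by adjoining the slab $\wti F^j(\Gamma_j\times[-\epsilon,0))$ on the \emph{opposite} side, and takes the leaf at $t=-\epsilon$ there as the barrier: its mean curvature points toward $\Gamma_j$, hence into the enlarged region. Minimizing the relative homology class in the region so assembled then yields the properly embedded minimizer directly, with no area-comparison step against $\Gamma_j$ required.
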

\begin{proof}
We first consider that $\Gamma_j$ is not contained in $\partial N$. Since $\Gamma_j$ is non-degenerate, then $\Gamma_j$ a contracting or expanding neighborhood (see Lemma \ref{lem:good neighborhood for non-degenerate}), i.e. there exist $\mu>0$, a neighborhood $\mc N_j$ of $\Gamma_j$ in $\wti M$, and a diffeomorphism 
\[\wti F^j:\Gamma_j\times (-\mu,\mu)\rightarrow\mc N_j\]
such that $\wti F^j(x,0)=x$ for $x\in\Gamma_j$ and for each $t\in (-\mu,\mu)\setminus\{0\}$, $\wti F^j(\Gamma_j\times\{t\})$ has free boundary and mean curvature vector pointing towards or away from $\Gamma_j$. By assumption \eqref{assump:generic sparation}, $\Gamma_1$ and $\Gamma_2$ generically separates $N$. Hence $N\setminus (\Gamma_1\cup\Gamma_2)$ has three generic components.  Let $N'$ be the closure of the generic component of $N\setminus(\Gamma_1\cup\Gamma_2)$ that contains $\Gamma_1$ and $\Gamma_2$. Without loss of generality, we assume that for $t>0$, $\wti F^j(\Gamma_j\times \{t\})$ intersects $N'\setminus (\Gamma_1\cup\Gamma_2)$.

 Now take $\epsilon\in(0,\mu)$ so that $\wti F^j(\Gamma_j\times \{\pm\epsilon\})$ meets $\partial N$ transversally for $j=1,2$.

\medskip
{\noindent\em Case 1: Both $\Gamma_1$ and $\Gamma_2$ have expanding neighborhoods.}

\medskip
In this case, we consider
\begin{gather*}
N_1:=N'\setminus\cup_{j=1}^2\wti F^j(\Gamma_j\times [0,\epsilon)),\ \  \
\partial N_1:=\partial N\cap N_1,\\
T_1:=\big[\cup_{j=1}^2\wti F^j(\Gamma_j\times \{\epsilon\})\cup T\big]\cap N'.
\end{gather*}
Clearly, $(N_1,\partial N_1,T_1,g)$ is a compact manifold with boundary and portion. Moreover, $\wti F^1(\Gamma_1\times\{\epsilon\})$ represents a non-zero relative homology class in $(N_1,\partial N_1)$. By minimizing the area of this class, we obtain a stable free boundary minimal hypersurface and a connected component $S$ is properly embedded in $N\setminus T$, which is the desired hypersurface since it is obtained by a minimizing procedure.

\medskip
{\noindent\em Case 2: Both $\Gamma_1$ and $\Gamma_2$ have contracting neighborhoods.}

\medskip
In this case, we consider
\begin{gather*}
N_2:=\cup_{j=1}^2\wti F^j(\Gamma_j\times [-\epsilon,0))\cup N',\\
\partial N_2:=(\partial N\cap N')\cup \cup_{j=1}^2\wti F^j(\partial\Gamma_j\times [-\epsilon,0)),\\
T_2:=\cup_{j=1}^2\wti F^j(\Gamma_j\times \{-\epsilon\})\cup (T\cap N').
\end{gather*}
Clearly, $(N_2,\partial N_2,T_2,g)$ is a compact manifold with boundary and portion (see Figure \ref{fig:both contract}). We can minimize the area of the relative homology class represented by $\wti F^1(\Gamma_1\times \{-\epsilon\})$ to get a free boundary minimal hypersurface. Particularly, one connected component is stable and properly embedded in $N\setminus T$ and is an area minimizer.

\begin{figure}[h]
\begin{center}
\def\svgwidth{0.78\columnwidth}
  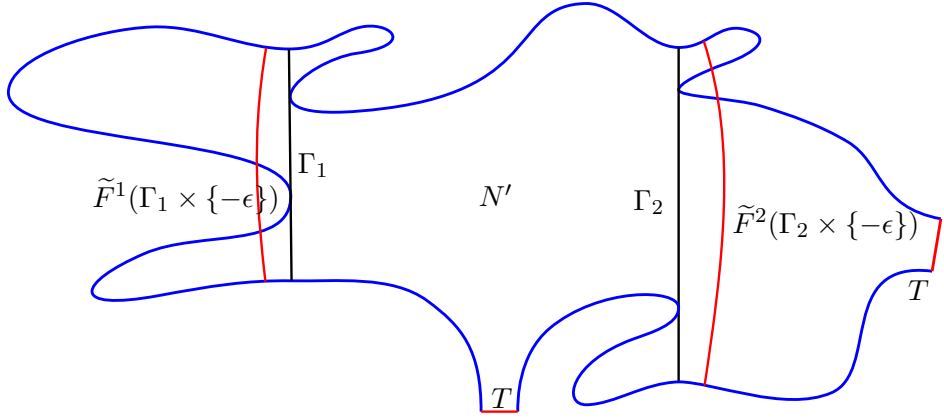
  \caption{Barriers from contracting neighborhoods.}
  \label{fig:both contract}
\end{center}
\end{figure}

\medskip
{\noindent\em Case 3: $\Gamma_1$ has a contracting neighborhood and $\Gamma_2$ has an expanding neighborhood.}

\medskip
In this case, we consider
\begin{gather*}
N_3:=N'\cup\wti F^1(\Gamma_1\times [-\epsilon,0))\setminus \wti F^2(\Gamma_2\times [0,\epsilon)),\\
\partial N_3:=(\partial N\cap N')\cup \wti F^1(\partial\Gamma_j\times [-\epsilon,0))\setminus \wti F^2(\Gamma_2\times [0,\epsilon)),\\
T_3:=\wti F^1(\Gamma_1\times \{-\epsilon\})\cup\big[(\wti F^2(\Gamma_2\times \{\epsilon\})\cup T)\cap N'\big].
\end{gather*}
By the same argument in the first two cases, we then obtain the desired hypersurface.

\medskip
To complete the proof, it suffices to consider $\Gamma_1\subset \partial N$. Then by assumption \eqref{assump:at most one minimal component}, $\Gamma_1$ has an expanding neighborhood in one side. Then it is just a subcase of Case 1 or Case 3. In either case, we can find a properly embedded, stable free boundary minimal hypersurface having a contracting neighborhood.
\end{proof}

\begin{remark}
In both Case 1 and 3, we used the expanding neighborhood in one side to be a barrier for minimizing problems even if such a neighborhood is not proper. The key observation here is that the interior of $\wti F^j(\Gamma_j\times\{\epsilon\})$ intersects $N'$ with angles less than $\pi/2$. We refer to \cite{GWZ_2}*{Lemma 4.13} for details.
\end{remark}

We now give a stronger proposition by a perturbation argument.
\begin{proposition}\label{prop:both two-sided}
Suppose that $(N,\partial N,T,g)$ satisfies (\ref{assump:contracting portion}--\ref{assump:at most one minimal component}).  Suppose that $\Gamma_j$ ($j=1,2$) is two-sided and has no proper contracting neighborhood in one side (see Remark \ref{rmk:not contracting and proper in one side}). Then there exists a two-sided, properly embedded, stable, free boundary minimal hypersurface having a contracting neighborhood.
\end{proposition}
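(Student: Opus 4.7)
The goal is to remove the non-degeneracy assumption from Proposition \ref{prop:two-sided:non-degenerate} by using Lemma \ref{lemma:good neighborhood} as a substitute for Lemma \ref{lem:good neighborhood for non-degenerate}. That lemma provides, for a (possibly degenerate) two-sided stable free boundary minimal hypersurface $\Gamma$, a one-parameter family $\wti F(\Gamma\times\{t\})$ of constant-mean-curvature free boundary leaves, where on each side the mean curvature is either of a definite nonzero sign for $|t|$ small or identically zero (a foliation by free boundary minimal hypersurfaces). Once this foliation is in hand, one can replay the three-case argument of Proposition \ref{prop:two-sided:non-degenerate} almost verbatim.

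First I would apply Lemma \ref{lemma:good neighborhood} to each $\Gamma_j$ (if $\Gamma_j$ is unstable, use Lemma \ref{lem:good neighborhood for non-degenerate} instead). Using Remark \ref{rmk:not contracting and proper in one side}, the hypothesis ``no proper contracting neighborhood in one side'' supplies, on each side of $\Gamma_j$, a sequence $t_i^\pm\to 0^\pm$ of leaves whose mean curvature points away from $\Gamma_j$ (``expanding-type'') or that fail to lie in $N$ (``improper-type''). For generic such $t_i$, the leaf $\Gamma_j^{t_i}:=\wti F^j(\Gamma_j\times\{t_i\})$ has nonzero constant mean curvature pointing away from $\Gamma_j$, or meets $\partial N$ transversally from outside.

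Next I would substitute these CMC leaves for the smooth expanding/contracting neighborhoods used in Cases 1--3 of Proposition \ref{prop:two-sided:non-degenerate}. Fixing $\epsilon_j$ with the correct sign, form the compact manifold with boundary and portion $(N^{*},\partial N^{*},T^{*},g)$ exactly as in the corresponding case (with $\Gamma_j^{\epsilon_j}$ playing the role of $\wti F^j(\Gamma_j\times\{\epsilon\})$), and minimize area in the nonzero relative homology class represented by $\Gamma_1^{\epsilon_1}$ in $(N^*,\partial N^*)$. Since $\Gamma_j^{\epsilon_j}$ is a \emph{non}-minimal CMC with mean curvature pointing into $N^{*}$, it is a one-sided barrier (as in \cite{GWZ_2}*{Lemma 4.13}), so the minimizer cannot coincide with it; the argument then produces a properly embedded stable free boundary minimal component $S\subset N\setminus T$ with a contracting neighborhood. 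Assumption \eqref{assump:at most one minimal component} takes care of the subcase $\Gamma_j\subset\partial N$ exactly as in Proposition \ref{prop:two-sided:non-degenerate}.

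The main obstacle is the degenerate ``foliation case'': it can happen that on some side of some $\Gamma_j$, every leaf of the family from Lemma \ref{lemma:good neighborhood} has identically zero mean curvature, so no strict CMC barrier is available and the minimization might collapse onto the boundary leaf. To handle this I would replace $\Gamma_j$ by a leaf $\wti F^j(\Gamma_j\times\{\epsilon\})$ of this free boundary minimal foliation (still a free boundary minimal hypersurface with no proper contracting neighborhood on the relevant side) and extend the foliation to its maximal interval; compactness of $N$ forces termination. At the terminal parameter the leaf either becomes non-degenerate with an expanding neighborhood in the desired direction (so Proposition \ref{prop:two-sided:non-degenerate} or the previous paragraph applies), or it abuts $\partial N$ or $T$, in which case assumptions \eqref{assump:contracting portion} and \eqref{assump:at most one minimal component} supply the missing barrier and the same minimization closes the argument, producing the desired properly embedded, stable, free boundary minimal hypersurface with a contracting neighborhood.
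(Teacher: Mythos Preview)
Your approach is genuinely different from the paper's. The paper does \emph{not} try to use the possibly-degenerate foliation of Lemma~\ref{lemma:good neighborhood} as a barrier directly. Instead it perturbs the metric $g$ to metrics $g_k\to g$ supported in tiny balls $B_{r_k}(q_j)$ (with $q_j\in\Gamma_j\setminus\partial N$) so that each $\Gamma_j$ becomes \emph{non-degenerate} for $g_k$; Proposition~\ref{prop:two-sided:non-degenerate} then applies verbatim and produces area minimizers $S_k$ for $g_k$. The crucial extra step is an area bound: using that every proper one-sided neighborhood of $\Gamma_j$ is expanding (from the hypothesis together with \eqref{assump:regular neighborhood}, \eqref{assump:half regular neighborhood}), one gets $\Area_{g_k}(S_k)<\mc A_1<\Area(\Gamma_j)$, so the limit $S$ of $S_k$ is disjoint from $\Gamma_j$, hence (by the maximum principle) from $B_{r_k}(q_j)$. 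Thus for large $k$ the minimizer $S_k$ already avoids the perturbed region and is an area minimizer for the \emph{original} metric $g$; assumption \eqref{assump:one sided expande} then forces $S_k$ to be two-sided. This perturbation-plus-limit trick reuses Proposition~\ref{prop:two-sided:non-degenerate} as a black box and bypasses the degenerate case entirely.

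Your handling of the ``degenerate foliation case'' is a real gap. You propose to extend the minimal foliation to its maximal interval and assert that the terminal leaf either becomes non-degenerate with an expanding side or abuts $\partial N$ or $T$. Neither alternative is justified: the terminal leaf may itself be a degenerate stable free boundary minimal hypersurface whose own Lemma~\ref{lemma:good neighborhood} family again consists of minimal leaves, so the procedure need not terminate in anything usable, and nothing forces the foliation to run into $\partial N$ or $T$. More importantly, this detour is unnecessary: assumptions \eqref{assump:regular neighborhood} and \eqref{assump:half regular neighborhood} already guarantee that on every \emph{proper} side of $\Gamma_j$ there exists a foliation with strictly nonzero mean curvature (hence, by the hypothesis, an expanding barrier), while every improper side supplies the angle barrier of \cite{GWZ_2}*{Lemma 4.13}. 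With that observation your first two paragraphs would suffice and the degenerate case never arises --- but you do not invoke \eqref{assump:regular neighborhood}, \eqref{assump:half regular neighborhood} in this way. You also omit the appeal to \eqref{assump:one sided expande} needed to conclude that the minimizer is two-sided.
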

\begin{proof}[Proof of Proposition \ref{prop:both two-sided}]
Firstly, we consider that $\Gamma_j$ is not part of $\partial N$ for $j=1,2$. Denote by 
$N'$ the closure of the generic component of $N\setminus (\Gamma_1\cup\Gamma_2)$ that contains $\Gamma_1$ and $\Gamma_2$. Let $\mc N_j$ be a neighborhood of $\Gamma_j$ in $\wti M$ and  
\[\wti F^j:\Gamma_j\times (-\mu,\mu)\rightarrow \mc N_j\]
be the map constructed by Lemma \ref{lemma:good neighborhood} for $\Gamma_j$ and $\mu>0$. Without loss of generality, we assume that for $t<0$,
\[\wti F^j(\Gamma_j\times \{t\})\cap N'=\emptyset.\]
Since $\Gamma_j$ does not have a proper and contracting neighborhood in one side, then by \eqref{assump:regular neighborhood} and \eqref{assump:half regular neighborhood}, each neighborhood in one side is expanding if it is proper. Hence in both cases, we can always take $\epsilon>0$ so that for $j=1,2$,  
\[\Area(\wti F^j(\Gamma_j\times\{\epsilon\})\cap N)<\Area (\Gamma_j).\]
Denote by 
\[\mc A_1:=\min_{j\in\{1,2\}}\Area(\wti F^j(\Gamma_j\times \{\epsilon\})).\]
Then we can take $r_k\rightarrow 0$, $q_j\in\Gamma_j\setminus\partial N$ so that
\[B_{r_k}(q_j)\cap \partial N=\emptyset \  \ \text{ and } \ \ B_{r_k}(q_j)\cap \wti F^j(\Gamma_j\times \{\epsilon\})=\emptyset.\] 
By \cite{IMN17}*{Proposition 2.3} (see also \cite{GWZ_2}*{Remark 5.5}), there exists a sequence of perturbed metrics $g_k\rightarrow g$ on $\wti M$ so that 
\begin{itemize}
\item $g_k(x)=g(x)$ for all $x\in\wti M\setminus (B_{r_k}(q_1)\cup B_{r_k}(q_2))$;
\item both $\Gamma_1$ and $\Gamma_2$ are non-degenerate free boundary minimal hypersurfaces in $(N,\partial N,T,g_k)$.
\end{itemize}
Clearly, $(N,\partial N,T,g_k)$ satisfies \eqref{assump:contracting portion}, \eqref{assump:generic sparation} and \eqref{assump:at most one minimal component}. Applying Proposition \ref{prop:two-sided:non-degenerate}, there exists a properly embedded free boundary minimal hypersurface $S_k$ which is an area minimizer. Moreover, by the argument in Proposition \ref{prop:two-sided:non-degenerate},
\[\Area_{g_k}(S_k)<\mc A_1.\]
Letting $k\rightarrow \infty$, by the compactness for stable free boundary minimal hypersurfaces \cite{GLZ16}, $S_k$ smoothly converges to a stable free boundary minimal hypersurface $S\subset N'$ in $(N,\partial N,T,g)$ with $\Area(S)\leq\mc A_1$. Such an area upper bound gives that $S$ is not $\Gamma_1$ or $\Gamma_2$. Then by the maximum principle, $S\cap B_{r_k}(q_j)=\emptyset$ for $j=1,2$. From the smooth convergence and the fact of $r_k\rightarrow 0$, $S_k\cap B_{r_k}(q_j)=\emptyset$ for large $k$. Hence for large $k$, $S_k$ is an area minimizer in $(N,\partial N,T,g)$. By the assumption \eqref{assump:one sided expande}, $S_k$ must be two-sided. Therefore, it is the desired free boundary minimal hypersurface in $(N,\partial N,T,g)$.

It remains to consider $\Gamma_1\subset \partial N$. By assumption \eqref{assump:at most one minimal component}, $\Gamma_1$ has an expanding neighborhood in one side in $N$. Then we just need to perturb the metric slightly near $\Gamma_2$. By a similar argument in Proposition \ref{prop:two-sided:non-degenerate}, we can also obtain a stable, properly embedded, free boundary minimal hypersurface with respect to perturbed metrics. Then the process above also gives a desired hypersurface.
\end{proof}

Now we are ready to state the main result in this section, which is a generalized Frankel property and will be used in the proof of Theorem \ref{thm:infitely many fbmhs}.
\begin{lemma}\label{lem:general existence of good fbmh}
Suppose that $(N,\partial N,T,g)$ satisfies (\ref{assump:contracting portion}--\ref{assump:at most one minimal component})and contains two disjoint connected free boundary minimal hypersurfaces in $N\setminus T$. Then $N\setminus T$ contains a two-sided, free boundary minimal hypersurface with a proper and contracting neighborhood \underline{in one side}.
\end{lemma}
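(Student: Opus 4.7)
\emph{Proof plan.} The strategy is to deduce the statement from Proposition \ref{prop:both two-sided} via a short case analysis. If either $\Gamma_j$ ($j=1,2$) is already a two-sided free boundary minimal hypersurface admitting a proper contracting neighborhood in one side, there is nothing to prove and we take it. Henceforth assume that neither of the two hypersurfaces has this property.

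\emph{Case 1: both $\Gamma_1$ and $\Gamma_2$ are two-sided.} Under the reduction, neither $\Gamma_j$ has a proper contracting neighborhood in one side, so Proposition \ref{prop:both two-sided} applies directly and produces a two-sided, properly embedded, stable free boundary minimal hypersurface $S \subset N\setminus T$ with a (two-sided) contracting neighborhood. Since $S$ is properly embedded in $N\setminus T$ and $\partial S\subset \partial N$, a short normal tubular neighborhood of $S$ lies inside $N$; thus its contracting neighborhood is proper on each side, in particular in one side, as required.

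\emph{Case 2: at least one of them, say $\Gamma_1$, is one-sided.} By assumption \eqref{assump:one sided expande}, $\Gamma_1$ has an expanding neighborhood. Consequently, for $\epsilon>0$ sufficiently small, the nearby smooth two-sided slice $\Gamma_1^\epsilon$ (defined via the parameterization of the expanding neighborhood, using the double cover as in the definition preceding \eqref{assump:contracting portion}) is a free boundary hypersurface with mean curvature strictly pointing away from $\Gamma_1$, and the expanding property yields the strict bound $\Area(\Gamma_1^\epsilon) < \Area(\Gamma_1)$. Pairing $\Gamma_1^\epsilon$ with an analogous two-sided slice built from $\Gamma_2$ (using whichever of \eqref{assump:regular neighborhood}, \eqref{assump:half regular neighborhood}, or \eqref{assump:one sided expande} applies to $\Gamma_2$), we construct a compact manifold with boundary and portion $(N_*, \partial N_*, T_*, g)$ as in the proof of Proposition \ref{prop:two-sided:non-degenerate} and minimize area in the natural nontrivial relative homology class. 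Combined with the metric perturbation argument of Proposition \ref{prop:both two-sided} and the Guang--Li--Zhou compactness theorem \cite{GLZ16}, this produces in the limit a stable free boundary minimal hypersurface $S$ with $\Area(S) < \Area(\Gamma_1)$. The strict area bound, the mean-convexity of $\Gamma_1^\epsilon$ combined with the maximum principle, and assumption \eqref{assump:one sided expande} (as used in the proof of Proposition \ref{prop:both two-sided}) together force $S$ to be two-sided, disjoint from $\Gamma_1$, and properly embedded in $N\setminus T$; by \eqref{assump:regular neighborhood} it then has a contracting neighborhood, and hence a proper contracting neighborhood in one side.

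\emph{Main obstacle.} The delicate point is Case 2: the barrier $\Gamma_1^\epsilon$ is itself not minimal, so the standard strong maximum principle between two minimal hypersurfaces does not apply verbatim. Its strict mean-convexity together with the strict area gap $\Area(\Gamma_1^\epsilon) < \Area(\Gamma_1)$ must be used as a substitute strict barrier, both in the minimization step and when passing to the limit under the perturbation argument. In addition, one must rule out degeneration of the minimizer onto $T$ or onto a closed minimal component of $\partial N$, which is excluded respectively by \eqref{assump:contracting portion} and \eqref{assump:at most one minimal component}.
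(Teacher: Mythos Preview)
Your Case 1 matches the paper and correctly invokes Proposition \ref{prop:both two-sided}. The divergence is in Case 2, where there is a genuine gap.

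You invoke assumption \eqref{assump:one sided expande} to conclude that the one-sided $\Gamma_1$ has an expanding neighborhood, but \eqref{assump:one sided expande} applies only to \emph{properly embedded} one-sided free boundary minimal hypersurfaces. The lemma allows $\Gamma_1$ to be merely almost properly embedded with nonempty touching set, and none of the hypotheses (\ref{assump:contracting portion}--\ref{assump:at most one minimal component}) controls the neighborhood structure of such a one-sided hypersurface. In that situation your Case 2 never gets started. Even when \eqref{assump:one sided expande} does apply, your barrier $\Gamma_1^\epsilon$ is not minimal, so Proposition \ref{prop:both two-sided} is not directly available and you are in effect reproving it for a non-minimal barrier; the sketch you give (perturbation, compactness, maximum principle at the limit) is too thin to carry this. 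A side issue: the claimed bound $\Area(\Gamma_1^\epsilon) < \Area(\Gamma_1)$ is off by a factor of two, since in $N$ the two-sided slice bounding a tube around a one-sided $\Gamma_1$ is a connected double cover of $\Gamma_1$, and the expanding property yields only $\Area(\Gamma_1^\epsilon) < 2\Area(\Gamma_1)$.

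The paper sidesteps all of this by passing to the double cover $(N_1,\partial N_1,T_1,g)$ of $(N,\partial N,T,g)$ in which the one-sided hypersurface lifts to a two-sided $\widetilde{\Gamma}$. Both hypersurfaces in $N_1$ are then two-sided free boundary minimal hypersurfaces, so Proposition \ref{prop:both two-sided} applies \emph{verbatim} and produces a two-sided, properly embedded, stable $S$ with a contracting neighborhood in $N_1\setminus(\Gamma_1\cup\widetilde{\Gamma})$, which is the desired hypersurface in $N\setminus T$. When both $\Gamma_j$ are one-sided, one further double cover reduces to this mixed case. This route needs no ad hoc barrier construction and does not rely on \eqref{assump:one sided expande} applying to the given $\Gamma_j$.
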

\begin{proof}
Let $\Gamma_1$ and $\Gamma_2$ be two disjoint, free boundary minimal hypersurfaces in $(N,\partial N,T,g)$.

\medskip
{\noindent\em Case 1: $\Gamma_1$ and $\Gamma_2$ are both two-sided.}

\medskip
Without loss of generality, we assume that $\Gamma_1$ and $\Gamma_2$ have no proper contracting neighborhood in one side. Then this lemma follows from Proposition \ref{prop:both two-sided}.

\medskip
{\noindent\em Case 2: $\Gamma_1$ is two-sided and $\Gamma_2$ is one-sided.}

\medskip
Without loss of generality, we assume that $\Gamma_1$ has no proper contracting neighborhood in one side and $\Gamma_2$ is not properly embedded or has an expanding neighborhood. Now consider the double cover $(N_1,\partial N_1, T_1,g)$ of $(N,\partial N,T,g)$ so that the double cover $\wti \Gamma_2$ of $\Gamma_2$ is a two-sided free boundary minimal hypersurface in $(N_1,\partial N_1,T_1,g)$. Then applying Proposition \ref{prop:both two-sided} again, we obtain a two-sided, properly embedded, free boundary minimal hypersurface $S$ having a contracting neighborhood so that $S\subset N_1\setminus (\Gamma_1\cup\wti \Gamma_2)$. Clearly, $S$ is the desired hypersurface in $N\setminus T$.

\medskip
{\noindent\em Case 3: Both $\Gamma_1$ and $\Gamma_2$ are one-sided.}

\medskip
Consider the double cover $(N_2,\partial N_2,T_2,g)$ of $(N,\partial N,T,g)$ so that the double cover $\wti\Gamma_1$ of $\Gamma_1$ is a two-sided free boundary minimal hypersurface in $(N_2,\partial N_2,T_2,g)$. Then the desired result follows from Case 2.
\end{proof}

We finish this section by giving an area lower bound for the free boundary minimal hypersurfaces. This can also be seen as an application of the construction of area minimizer in Proposition \ref{prop:both two-sided}; cf. \cite{Song18}*{Lemma 12}.
\begin{lemma}\label{lem:area lower bound}
Suppose that $(N,\partial N,T,g)$ satisfies (\ref{assump:contracting portion}--\ref{assump:at most one minimal component}). Let $T_1,\cdots,T_q$ be the connected components of $T$. Assume that 
\begin{enumerate}[(i)]
\item\label{item:assume:proper expanding} every properly embedded free boundary minimal hypersurface in $N\setminus T$ has an expanding neighborhood;
\item\label{item:assume:half proper:expanding} every half-properly embedded free boundary minimal hypersurface in $N\setminus T$ has an expanding neighborhood in one side which is proper.
\end{enumerate}
Then for any free boundary minimal hypersurface $\Gamma$ in $N\setminus T$:
\begin{enumerate}
\item\label{item:two-sided:area lower bound} if $\Gamma$ is two-sided,
\[\Area(\Gamma)>\max\{\Area(T_1),\cdots,\Area(T_q)\};\]
\item\label{item:one-sided:area lower bound} if $\Gamma$ is one-sided,
\[2\Area(\Gamma)>\max\{\Area(T_1),\cdots,\Area(T_q)\}.\]
\end{enumerate}
\end{lemma}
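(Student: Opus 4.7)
The strategy is to prove \eqref{item:two-sided:area lower bound} by contradiction, following the closed-manifold argument in \cite{Song18}*{Lemma 12}, and then reduce \eqref{item:one-sided:area lower bound} to it via the orientation double cover. So I suppose that $\Gamma$ is a two-sided almost properly embedded free boundary minimal hypersurface in $N\setminus T$ with $\Area(\Gamma)\leq \Area(T_i)$ for some $i$, and aim to construct a stable free boundary minimal hypersurface in $N\setminus T$ whose own expanding neighborhood (forced by \eqref{item:assume:proper expanding} or \eqref{item:assume:half proper:expanding}) violates its minimality.

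First, by \eqref{item:assume:proper expanding} if $\Gamma$ is properly embedded and by \eqref{item:assume:half proper:expanding} if $\Gamma$ is only half-properly embedded, $\Gamma$ admits a (proper) expanding foliation $\wti F(\Gamma\times\{t\})$ (in one side in the latter case) whose leaves have mean curvature vector pointing away from $\Gamma$, so the first variation formula yields $\Area(\wti F(\Gamma\times\{t\}))<\Area(\Gamma)$ for all small $t>0$. Using the generic separation from \eqref{assump:generic sparation}, I push $\Gamma$ onto the generic side containing $T_i$ to produce a properly embedded hypersurface $\Gamma^\epsilon:=\wti F(\Gamma\times\{\epsilon\})$ in $N$. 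Let $N'$ be the closure of the component of $N\setminus \Gamma^\epsilon$ containing $T_i$; then $(N',\partial N\cap N',(T\cap N')\cup \Gamma^\epsilon)$ is a compact manifold with boundary and portion in the sense of Definition \ref{D:manifold with boundary and portion}, in which $\Gamma^\epsilon$ is a strict one-sided barrier (its mean curvature vector points into $N'$) and $T_i$ is a barrier as a minimal portion with a contracting neighborhood on the $N$-side (by \eqref{assump:contracting portion}). Minimizing area in the relative homology class of $\Gamma^\epsilon$ in $H_n(N',\partial N\cap N')$, as in the construction of Proposition \ref{prop:two-sided:non-degenerate}, yields a stable, almost properly embedded free boundary minimal hypersurface $\Sigma\subset N'$ with
\[\Area(\Sigma)\leq \Area(\Gamma^\epsilon)<\Area(\Gamma)\leq \Area(T_i).\]

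The strict area bound rules out $\Sigma\supset T_i$, the minimality of $\Sigma$ rules out $\Sigma\supset \Gamma^\epsilon$, and the strong maximum principle of Appendix \ref{sec:app:MP} applied at both barriers forces $\Sigma$ to lie in the interior of $N'$; hence $\Sigma$ is itself a stable free boundary minimal hypersurface in $N\setminus T$. Applying \eqref{item:assume:proper expanding} or \eqref{item:assume:half proper:expanding} now to $\Sigma$ supplies a (proper) expanding foliation of $\Sigma$, whose parallel leaf $\Sigma^\delta$ has $\Area(\Sigma^\delta)<\Area(\Sigma)$ yet lies in the same relative homology class in $N'$; this contradicts the minimality of $\Sigma$ and finishes \eqref{item:two-sided:area lower bound}. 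For \eqref{item:one-sided:area lower bound}, I pass to the orientation double cover $(\wti N,\partial \wti N,\wti T,\wti g)$ in which the lift $\wti\Gamma$ is two-sided and $\Area(\wti\Gamma)=2\Area(\Gamma)$; each $T_j$ lifts either to a connected double cover of area $2\Area(T_j)$ or to two disjoint copies each of area $\Area(T_j)$. Since the covering map is a local isometry, hypotheses \eqref{assump:contracting portion}--\eqref{assump:at most one minimal component} together with \eqref{item:assume:proper expanding} and \eqref{item:assume:half proper:expanding} all transfer to $\wti N$, so \eqref{item:two-sided:area lower bound} applied to $\wti\Gamma$ gives $2\Area(\Gamma)>\max_j\Area(T_j)$ in either lifting scenario.

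The main obstacle I anticipate is the half-properly embedded case when $T_i$ happens to lie on the non-proper side of $\Gamma$, since \eqref{item:assume:half proper:expanding} only supplies an expanding foliation in the proper direction and this may not be the direction pointing toward $T_i$; resolving this likely requires either a cut-off push-off that vanishes on the touching set of $\Gamma$ and uses the generic separation \eqref{assump:generic sparation} more delicately to land $\Gamma^\epsilon$ on the side of $T_i$, or a preliminary metric perturbation in the spirit of Proposition \ref{prop:both two-sided} that removes the touching set while preserving the relevant barriers. A secondary subtlety is that the strong maximum principle must be applied carefully at the touching sets of $\Sigma$ and the barriers to rule out tangential coincidences of $\Sigma$ with $\Gamma^\epsilon$ or $T_i$ before concluding that $\Sigma$ lies in $N\setminus T$.
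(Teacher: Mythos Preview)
Your approach is essentially the same as the paper's: argue \eqref{item:two-sided:area lower bound} by contradiction, trap an area minimizer between the expanding $\Gamma$ and the contracting $T_i$, and observe that a minimizer with an expanding neighborhood is impossible; then deduce \eqref{item:one-sided:area lower bound} from the double cover. The only substantive difference is cosmetic: the paper works directly with the generic component of $N\setminus\Gamma$ (putting $\Gamma$ into the portion $T'$) rather than first pushing to $\Gamma^\epsilon$, and phrases the contradiction as ``the minimizer has a contracting neighborhood, violating \eqref{item:assume:proper expanding}'' rather than your equivalent ``the minimizer has an expanding neighborhood, violating minimality''.

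The obstacle you flag is real and is exactly the case split the paper makes. When $\Gamma$ has a \emph{proper} one-sided neighborhood in $N'$, assumption \eqref{item:assume:half proper:expanding} (or \eqref{item:assume:proper expanding}) gives the expanding barrier directly and the minimization goes through as you wrote. When the one-sided neighborhood toward $T_i$ is \emph{not} proper (touching set on that side), the paper does precisely what you anticipated: it invokes the metric-perturbation argument of Proposition~\ref{prop:both two-sided} to produce minimizers $S_k$ in nearby metrics $g_k$ with $\Area_{g_k}(S_k)<\Area(\Gamma)$ and passes to the limit. So your suggested fix is the correct one; you should simply carry it out rather than leave it as an anticipated difficulty. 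One minor remark: an area minimizer in these dimensions is automatically \emph{properly} embedded, so only \eqref{item:assume:proper expanding} (together with \eqref{assump:one sided expande} if one-sided) is needed for the final contradiction on $\Sigma$, not \eqref{item:assume:half proper:expanding}.
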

\begin{proof}
We prove \eqref{item:two-sided:area lower bound} and then \eqref{item:one-sided:area lower bound} follows by considering the double cover. Without loss of generality, we assume that $T_1$ is the connected component of $T$ that has maximal area.

Assume on the contrary that $\Gamma$ is a two-sided free boundary minimal hypersurface in $N\setminus T$ so that 
\[\Area(\Gamma)\leq \max\{\Area(T_1),\cdots,\Area(T_q)\}.\]
Denote by $N'$ the closure of the generic component of $N\setminus \Gamma$ that contains $T_1$ and $\Gamma$. We divide the proof into two cases by considering whether $\Gamma$ has a proper neighborhood in $N'$ or not.

\medskip
If $\Gamma$ has a proper neighborhood in one side in $N'$, then such a neighborhood is expanding. Denote by 
\[\partial N'=\partial N\cap N'\  \ \text{ and }\  \ T'=(T\cap N')\cup \Gamma.\]
Then $(N',\partial N',T',g)$ is a compact manifold with boundary and portion. Clearly, $\Gamma$ represents a non-trivial relative homology class in $(N',\partial N')$. Using the argument in Proposition \ref{prop:both two-sided}, we obtain a two-sided, properly embedded, free boundary boundary minimal hypersurface $S$ having a contracting neighborhood. Note that $S$ does not contain $T'$ since 
\[\Area(S)<\Area(\Gamma)\leq\Area(T_1)\leq \Area(T').\]
Then $S$ has a connected component in $N'\setminus T'$, which contradicts the assumption \eqref{item:assume:proper expanding}.

\medskip
If $\Gamma$ has no proper neighborhood in one side in $N'$, then we can use a perturbation argument in Lemma \ref{lem:general existence of good fbmh} to construct an area minimizer $S'$ having $\Area(S')<\Area(\Gamma)$. Such an area bound also implies that $S$ contains a two-sided free boundary minimal hypersurface in $N'\setminus T$. This also contradicts \eqref{item:assume:proper expanding}.

\end{proof}

\subsection{No mass concentration at the corners}
In a compact manifold $(N^{n+1},\partial N,g)$ with smooth boundary (without portion), then the monotonicity formula in \cite{GLZ16} gives that a stationary $n$-varifold $V$ with free boundary 
can not support on an $(n-1)$-dimensional submanifold. In this subsection, we generalize this result directly in a compact manifold with boundary and non-empty portion.

\begin{lemma}\label{lem:no mass accumulate}
Let $(N,\partial N,T,g)$ be an $(n+1)$-dimensional compact manifold with boundary and portion. Let $V$ be an $n$-varifold such that the first variation vanishes along each vector field $X$ satisfying that $X$ is tangential when restricted on $\partial N$ and $T$. Denote by $S_V$ the support of $V$. Supposing that $T$ is a stable free boundary minimal hypersurface and $S_V\cap \partial T\neq \emptyset$, then for any neighborhood $U$ of $\partial T$ in $N$, $S_V$ intersects $U\setminus \partial T$.
\end{lemma}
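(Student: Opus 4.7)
The plan is to argue by contradiction: assume there is a neighborhood $U$ of $\partial T$ with $S_V \cap (U \setminus \partial T) = \emptyset$, pick $p\in S_V \cap \partial T$, and derive an inconsistency with the stationarity hypothesis on $V$. The strategy is to reduce, via a double reflection, to the elementary fact that a nontrivial stationary $n$-varifold in $\R^{n+1}$ cannot be supported on an $(n-1)$-dimensional linear subspace.

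First, I would reflect $N$ across $T$. Because $T$ is a free boundary minimal hypersurface it meets $\partial N$ orthogonally along $\partial T$, so gluing two copies of $N$ along $T$ yields a manifold $N^*$ whose boundary $\partial N^*$ is $C^1$ across the crease $\partial T$ and whose metric is Lipschitz across $T$ (smooth elsewhere). Let $\sigma$ denote the reflection. Given any vector field $X$ on $N^*$ tangent to $\partial N^*$, the field $Y := X|_N + \sigma^\ast(X|_{\sigma(N)})$ is tangent to $\partial N$ and, because $\sigma$ acts as the identity on $T\partial T$ and as $-1$ on the $T$-normal direction, tangent to $T$ along $T$. The hypothesis on $V$ thus gives $\delta \wti V(X) = \delta V(Y) = 0$ for $\wti V := V + \sigma_\ast V$, so $\wti V$ is a stationary $n$-varifold in $N^*$ with free boundary on $\partial N^*$, and $p \in \partial N^* \cap S_{\wti V}$.

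Next, I would apply the free-boundary monotonicity formula \cite{GLZ16}*{Theorem 3.4} to $\wti V$ at $p$: the density $\Theta^n(\wti V, p)$ is strictly positive, and a tangent cone $C$ exists as a stationary free-boundary $n$-cone in the half-space of $T_pN^*$ bounded by $T_p\partial N^*$. The contradiction hypothesis forces the support of $C$ to lie inside the tangent $(n-1)$-plane $T_p\partial T \subseteq T_p\partial N^*$. Reflecting $C$ once more across $T_p\partial N^*$ produces a stationary $n$-varifold $\wti C$ in $\R^{n+1}$ supported on the $(n-1)$-dimensional subspace $T_p\partial T$ and with positive density at the origin. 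To rule out $\wti C$, for any unit vector $e \in (T_p\partial T)^\perp$ and any bump $\psi(x) = \psi_1(\langle x,e\rangle)\,\psi_2$ with $\psi_1(0) = 0$ and $\psi_1'(0) = 1$, the identity $\mathrm{div}_S(\psi e) = \langle \nabla \psi, P_S e\rangle$ (where $P_S$ denotes the orthogonal projection onto $S$) reduces $\delta \wti C(\psi e) = 0$ to $\int \psi_2 |P_S e|^2\, d\wti C(x,S) = 0$. Varying $\psi_2$ and then $e$ over the $2$-dimensional normal space $(T_p\partial T)^\perp$ forces the tangent $n$-plane $S$ to sit inside $T_p\partial T$ for $\wti C$-a.e.\ $(x, S)$, impossible since $\dim S = n > n-1$.

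The main obstacles I anticipate are the rigorous justification that $\wti V$ is stationary with free boundary on $\partial N^*$ (the calculation above is the essential point, but careful handling is required near the crease $\partial T$, and one must check that $\partial N^*$-tangent vector fields are genuinely captured by the $Y$-construction) and the verification that the monotonicity formula of \cite{GLZ16} remains valid for the Lipschitz (not smooth) doubled metric on $N^*$. The stability of $T$ enters only through the existence of the doubled manifold with enough regularity for the monotonicity argument; the core mechanism of the proof is the dimensional contradiction in the last step.
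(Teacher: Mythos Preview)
Your approach is conceptually sound but takes a considerably more roundabout route than the paper, and the two obstacles you flag are genuine rather than routine: the monotonicity formula of \cite{GLZ16} is stated for smooth ambient data, so extending it to the Lipschitz doubled metric on $N^*$ would require real additional work, and the symmetrized field $Y$ is a priori only continuous across $T$, so one would have to argue that the stationarity hypothesis on $V$ extends to such fields.

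The paper's proof avoids both reflections and the blow-up entirely. Arguing by contradiction (assume $S_V \cap U \subset \partial T$), it uses the foliation $F: T \times [0,\epsilon) \to \mathcal{N}$ by free boundary hypersurfaces, furnished by the stability of $T$ via Lemmas~\ref{lem:good neighborhood for non-degenerate} and~\ref{lemma:good neighborhood}, to manufacture two admissible test vector fields directly in $(N, \partial N, T)$:
\begin{itemize}
\item $t \nabla t$, with $t$ the leaf parameter; this vanishes on $T$, and $\nabla t$ is tangent to $\partial N$ because the leaves meet $\partial N$ orthogonally;
\item $\mathfrak{d} \nabla \mathfrak{d}$, where $\mathfrak{d}$ is the distance to $\partial T$ inside $T$, extended constant along the foliation; this vanishes on $\partial N$ near $\partial T$.
\end{itemize}
Stationarity of $V$ against each yields $\int |p_S \nabla t|^2 \, dV\llcorner U = 0 = \int |p_S \nabla \mathfrak{d}|^2 \, dV\llcorner U$. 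At points of $\partial T$ one has $\nabla t \perp \nabla \mathfrak{d}$, and since any $n$-plane $S$ in an $(n+1)$-dimensional tangent space must meet the $2$-plane $\mathrm{span}(\nabla t, \nabla \mathfrak{d})$ in a line, the sum $|p_S \nabla t|^2 + |p_S \nabla \mathfrak{d}|^2$ is bounded below by a positive constant --- a contradiction.

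This is precisely your final dimensional argument (the impossibility of an $n$-plane orthogonal to a $2$-dimensional space), but carried out at finite scale in $N$ with explicit test fields rather than after two reflections and passage to a tangent cone. The paper's route therefore sidesteps both of your flagged obstacles at the cost of needing the explicit foliation; it even remarks afterward that stability and minimality of $T$ are inessential, since any free boundary foliation near $T$ would do.
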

\begin{proof}[Proof of Lemma \ref{lem:no mass accumulate}]
Recall that by Lemmas \ref{lem:good neighborhood for non-degenerate} and \ref{lemma:good neighborhood}, there exists a neighborhood $\mc N$ of $T$ and a diffeomorphism $F:T\times[0,\epsilon)\rightarrow \mc N$ so that $F(T\times\{t\})$ is an embedded free boundary hypersurface. 
	
Assume on the contrary that there exists a neighborhood $U$ of $\partial T$ so that $S_V\cap U\subset \partial T$. Without loss of generality, we assume that $U\subset \mc N$. Then $t\nabla t$ is tangential when restricted on $\partial N$ and $T$ in $U$. Hence the first variation of $V\llcorner U$ vanishes along $t\nabla t$, i.e.
	\begin{equation}\label{eq:for nabla s}
	0=\int \dv_S(t\nabla t)dV\llcorner U(x,S)=\int |p_S\nabla t|^2 dV\llcorner U(x,S). 
	\end{equation}
	Here $p_S(\cdot)$ is the projection to $S$.
	
Let $\mk d$ be the distance to $\partial T$ in $T$. We now extend it to be a function defined in a neighborhood of $T$ by setting 
	\[ \mk d(F(x,t)):=\mk d(x).  \]
	Then $\mk d\nabla \mk d $ is tangential when restricted on $\partial N$ and $T$ in $U$. Hence we also have
	\begin{equation}\label{eq:for nabla d}
	0=\int \dv_S(\mk d\nabla \mk d)dV\llcorner U(x,S)=\int |p_S\nabla \mk d|^2 dV\llcorner U(x,S).
	\end{equation}
	Note that $S$ is an $n$-dimensional hyperplane in $T_xN$ and $\nabla s\perp \nabla\mk d$ at $x\in  \partial T$. Hence there exists a unit vector $a$ so that 
	\[ a\in S\cap \{c_1\nabla s+c_2\nabla\mk d: c_1,c_2\in \mb R \}. \]
	Then 
	\[ |p_S\nabla t|^2 +|p_S\nabla\mk d|^2 \geq |g(\nabla s,a)|^2+|g(\nabla \mk d,a)|^2 \geq \min\{ |\nabla t|^2,|\nabla \mk d|^2 \}. \]
	Denote by $c_0=\min_{q\in  F(T\times [0,\epsilon])} \min\{|\nabla t|^2,1\}>0$. Note that $|\nabla \mk d|=1$ on $\partial T$ by definition. Hence we have that for $x\in\partial T$,
	\[  |p_S\nabla s|^2 +|p_S\nabla\mk d|^2 \geq c_0. \]
	However, this contradicts \eqref{eq:for nabla s} and \eqref{eq:for nabla d}. Hence Lemma \ref{lem:no mass accumulate} is proved.
	\end{proof}

We remark that in the Lemma \ref{lem:no mass accumulate}, the stability and minimality of $T$ are both redundant. It is only used to construct a neighborhood foliated by free boundary hypersurfaces. Such a result may hold true for any embedded hypersurface with free boundary by using the implicit function theorem.

\section{Confined min-max free boundary minimal hypersurfaces}\label{sec:confined min-max theory}
\subsection{Construction of non-compact manifold with cylindrical ends}\label{subsection:add cylinders}
In this part, we define the manifold with boundary and cylindrical ends. Then we will construct a sequence of compact manifold with boundary and portion converging to this non-compact manifold in some sense. The construction here is similar to \cite{Song18}*{Section 2.2} with necessary modifications.

Let $(N,\partial N,T,g)$ be a connected compact Riemannian manifold with boundary and portion endowed with a metric $g$. Suppose that $T$ is a free boundary minimal hypersurface. Then by Lemmas \ref{lem:good neighborhood for non-degenerate} and \ref{lemma:good neighborhood}, there a neighborhood of $T$ which is smoothly foliated with properly embedded leaves. In other words, there exist a neighborhood $\mc N$ of $T$ and a diffeomorphism
\[ F:T\times [\,0,\hat t\,]\rightarrow\mc N,\]
where $F(T\times \{0\})=T$ and for all $t\in(0,\hat t\,]$, $F(T\times\{t\})$ is a properly embedded hypersurface with free boundary. Moreover, there exists a positive function $\phi$ on $T$ so that 
\begin{equation}\label{eq:ppt at T}
F_*(\ppt)\Big|_T=\phi\n \text { and } g(F_*(\ppt),\n)>0 \text { on } F(T\times [0,\hat t]),
\end{equation}
where $\n$ is the unit inward normal vector field of $F(T\times\{t\})$ in $N$. 

In general, $F_*(\ppt)$ may not be a tangential vector field around $\partial T$ along $\partial N$. Fortunately, we can amend the diffeomorphism $F$ to overcome this (cf. \cite{MN16}*{Proof of Deformation Theorem C}). Note that $t$ can be used to define a function on $F(T\times [0,\hat t])$ by setting
\[ t (F(x,a)):=a. \]
Taking a vector field $X$ of $N$ so that it is an extension of $\nabla t/|\nabla t|^2$ and $X|_{\partial N}\in T(\partial N)$. Let $(\ms F_t)_{t\geq 0}$ be the one-parameter family of diffeomorphisms generated by $X$. We now claim that $\ms F_t(T)=F(T\times \{t\})$ for any $t\in[0,\hat t]$. Namely, for any $x\in T$, we have
\[ \frac{d}{ds}t(\ms F_s(x)) =\langle \nabla t,X\rangle=1. \]
Hence $t(\ms F_s(T))=s$ and the claim is proved. Note that $\ms F$ can also be regarded as a diffeomorphism 
\[  \ms F:T\times[\,0,\hat t\,]\rightarrow \mc N\]
by setting $\ms F(x,t)=\ms F_t(x)$. By the definition of $\ms F$ and $X$, we have that 
\begin{equation}\label{eq:amended ppt}
\ms F_*(\ppt)=\nabla t/|\nabla t|^2 
\end{equation}
for $t\in[0,\hat t]$. From \eqref{eq:ppt at T}, we also have
\begin{equation}\label{eq:new ppt at T}
\ms F_*(\ppt)\Big|_T=\phi\n \text { and } g(\ms F_*(\ppt),\n)>0 \text { on } \ms F(T\times [0,\hat t]).
\end{equation}
 We also use $\ppt$ to denote $\ms F_*(\ppt)$ for simplicity.

Clearly, there exists a positive smooth function $f$ on $\ms F(T\times [0,\hat t\,])$ so that the metric $g$ can be written as
\begin{equation}\label{eq:decompose metric}
g=g_t(q)\oplus (f(q)dt)^2, \ \ \ \forall q\in \ms F(T\times\{t\}). 
\end{equation}
Here $g_t=g\llcorner \ms F(T\times \{t\})$ is the restricted metric and it can be extend to define a 2-form over $TN$. Namely, a vector field $X$ can be decomposed to
\[  X=X_\perp+X_\parallel,\]
where $X_\perp$ is normal to $\ppt$ and $X_\parallel$ is a multiple of $\ppt$. Then for any two vector fields $X,Y$, we define
\[ g_t(X,Y):=g_t(X_\perp,Y_\perp). \]

We also remark that $f|_T=\phi$ by \eqref{eq:new ppt at T}.

 Now for any $\epsilon<\hat t$, define on $N$ the following metric $h_\epsilon$:
\[
h_\epsilon(q):=
\left\{
\begin{array}{ll}
g_t(q)\oplus(\vartheta_\epsilon(t)f(q)dt)^2 &\text{\ for \ } q\in \ms F(T\times[0, \epsilon])\\
g(q) &\text{\ for \ } q\in N\setminus \ms F(T\times [0, \epsilon]).
\end{array} 
\right.
\]
Here $\vartheta_\epsilon$ is chosen to be a smooth function on $[0,\epsilon]$ so that
\begin{itemize}
\item $1\leq \vartheta_\epsilon$ and $\frac{\partial}{\partial t}\vartheta_\epsilon\leq 0$;
\item $\vartheta_\epsilon\equiv1$ in a neighborhood of $\epsilon$;
\item $\lim_{\epsilon\rightarrow0}\int_{\epsilon/2}^\epsilon\vartheta_{\epsilon}=+\infty$;
\end{itemize}

Obviously, we have the following lemma.
\begin{lemma}[cf. \cite{Song18}*{Lemma 4}] \label{lem:2 form and mean curvatue in new metric}
Suppose that the leaf $\ms F(T\times \{t\})$ has free boundary on $\partial N$ and its non-zero mean curvature vector points towards $T$ for all $t\in(0,\epsilon)$ in $(N,\partial N,T,g)$. Then each slice $\ms F(T\times\{t\})$ is a free boundary hypersurface and satisfies the following with respect to the new metric $h_\epsilon$:
\begin{enumerate}
\item it has non-zero mean curvature vector pointing in the direction of $-\frac{\partial}{\partial t}$;
\item its mean curvature goes uniformly to zero as $\epsilon$ converges to 0;
\item its second fundamental form is bounded by a constant $C$ independent of $\epsilon$.
\end{enumerate} 
\end{lemma}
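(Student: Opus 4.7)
The plan is to exploit the fact that the deformation $g \rightsquigarrow h_\epsilon$ only rescales the metric in the $\partial/\partial t$-direction by the $t$-dependent factor $\vartheta_\epsilon(t)$, leaving the intrinsic geometry of every leaf $\Sigma_t := \ms F(T\times\{t\})$ untouched. In particular, the induced metric on $\Sigma_t$ equals $g_t$ in both $g$ and $h_\epsilon$, and the normal bundle is spanned by $\partial/\partial t$ in either case. Combined with the fact that $\partial/\partial t$ is tangent to $\partial N$ by the amendment leading to \eqref{eq:amended ppt}, this already shows that $\Sigma_t$ meets $\partial N$ orthogonally with respect to $h_\epsilon$, supplying the free boundary half of (1).

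The heart of the argument is a scaling identity for the second fundamental form. I would set $V=\partial/\partial t$ and extend leaf-tangent fields $X,Y$ to be $V$-invariant, so $[V,X]=[V,Y]=0$ and $[X,Y]$ stays tangent to the leaves. Because $V$ is orthogonal to every leaf in both metrics, the Koszul formula collapses to
\[
2\,g(\nabla^g_X V, Y) \;=\; V\,g_t(Y,X) \;=\; 2\,h_\epsilon(\nabla^{h_\epsilon}_X V, Y),
\]
whose right-hand side is manifestly independent of the stretched normal. Dividing by the respective $V$-norms $f$ and $\vartheta_\epsilon f$ to extract the unit normals $\nu^g$ and $\nu^{h_\epsilon}$ yields
\[
A^g(X,Y) \;=\; \frac{1}{2f}\,\partial_t g_t(X,Y), \qquad A^{h_\epsilon}(X,Y) \;=\; \frac{1}{2\,\vartheta_\epsilon f}\,\partial_t g_t(X,Y),
\]
and hence the key identity $A^{h_\epsilon}=\vartheta_\epsilon^{-1} A^g$ as scalar second fundamental forms on $\Sigma_t$ with respect to the common induced metric $g_t$.

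The three conclusions follow quickly. Since $\vartheta_\epsilon>0$, the sign of $A^{h_\epsilon}$ matches that of $A^g$, and a short computation in the normal line gives $\vec H^{h_\epsilon}=\vartheta_\epsilon^{-2}\,\vec H^g$ as ambient vectors, so the mean curvature vector continues to point in direction $-\partial/\partial t$, finishing (1). For (2), $|\vec H^{h_\epsilon}|_{h_\epsilon}=\vartheta_\epsilon^{-1}|\vec H^g|_g\le |\vec H^g|_g$, and since $T=\Sigma_0$ is $g$-minimal the continuous function $|\vec H^g|_g$ vanishes at $t=0$; hence $\sup_{t\in(0,\epsilon)}|\vec H^{h_\epsilon}|_{h_\epsilon}\to 0$ as $\epsilon\to 0$. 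For (3), the leaf-orthonormal frame is the same for $g$ and $h_\epsilon$, so the pointwise norm satisfies $|A^{h_\epsilon}|=\vartheta_\epsilon^{-1}|A^g|\le |A^g|$, bounded uniformly on the compact tube $\ms F(T\times[0,\hat t])$ independently of $\epsilon$. The main obstacle I anticipate is the bookkeeping of the normal-direction rescaling — in particular, ensuring that the factors of $\vartheta_\epsilon$ coming from passing between the scalar and vector versions of $A$ and $\vec H$, and from the $h_\epsilon$-norm of $V$, combine in the favourable direction rather than spoiling the bounds. Once the identity $A^{h_\epsilon}=\vartheta_\epsilon^{-1}A^g$ is pinned down, items (1)–(3) are essentially immediate.
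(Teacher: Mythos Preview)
Your argument is correct. The paper itself offers no proof of this lemma --- it is introduced with ``Obviously, we have the following lemma'' and a reference to \cite{Song18}*{Lemma 4} --- and your computation via the Koszul formula, pinning down the identity $A^{h_\epsilon}=\vartheta_\epsilon^{-1}A^g$ (hence $H^{h_\epsilon}=\vartheta_\epsilon^{-1}H^g$ and $\vec H^{h_\epsilon}=\vartheta_\epsilon^{-2}\vec H^g$) from which all three items follow, is precisely the expected argument.
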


Let $\varphi:T\times \{0\}\rightarrow T$ be the canonical identifying map. Define the following non-compact manifold with cylindrical ends:
\[\mc C(N):=N\cup_{\varphi}(T\times [0,+\infty)).\]
We endow it with the metric $h$ such that $h=g$ on $N$ and 
\begin{equation}\label{eq:define of h}
h=g\llcorner T\oplus (f_0dt)^2
\end{equation}
on $T\times [0,+\infty)$. Here $g\llcorner T$ is the restriction of $g$ to the tangent bundle of $T$ and 
\[f_0(x,t)=f(\varphi(x,0))=\phi(\varphi(x));\]
see \eqref{eq:decompose metric} for the definition of $f$. We remark that under the metric $h$, each slice $T\times\{t\}$ is totally geodesic. We define the homeomorphism $\gamma:T\times (-\hat t,\hat t\,)\rightarrow \ms F(T\times[0,\hat t\,))\cup_\varphi T\times(-\hat t,0]$ by 
\begin{equation}\label{eq:define gamma}
\gamma(x,t)=\ms F(x,t) \text{ for } t\in[0,\hat t); \ \ \ \gamma(x,t)= (x,t) \text{ for } t\in (-\hat t,0) .
\end{equation}

The following lemma gives that $(\mc C(N),h)$ is a $C^1$ manifold.
\begin{lemma}\label{lem:CN is C1}
With the differential structure associated with $h$ on $N$ and $T\times (0,+\infty)$, $\mc C(N)$ is a $C^1$ manifold with boundary. Moreover, the metric is Lipschitz continuous. 
\end{lemma}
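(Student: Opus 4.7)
The plan is to cover $\mc C(N)$ by the smooth atlases on $N$ and on $T\times(0,+\infty)$ together with a single ``collar chart'' $\gamma$ around the gluing locus $T$, verify that all transitions are $C^1$ (in fact smooth on each closed half), and then check that $\gamma^{\ast}h$ is Lipschitz across $T$.

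First I would assemble the atlas. Away from $T$ I use the pre-existing smooth charts on $N\setminus T$ and on $T\times(0,+\infty)$. Around $T$ I use $\gamma:T\times(-\hat t,\hat t)\to\mc C(N)$ from \eqref{eq:define gamma}. On $\{\tau\geq 0\}$ the restriction $\gamma=\ms F$ is a smooth chart of $N$, so the transition with the $N$ atlas is the identity. On $\{\tau\leq 0\}$ the restriction $\gamma(x,\tau)=(x,-\tau)$ is a linear reflection in the cylinder variable, so the transition with the smooth chart of $T\times[0,+\infty)$ is $\tau\mapsto -\tau$. Each transition is smooth on its own closed half, which is all that is needed for the combined atlas to be $C^1$-compatible. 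Some extra care is needed near $\partial T$: in $N$ the set $\partial T$ is the corner where $T$ meets $\partial N$, while in the cylinder it is the corner where $T\times\{0\}$ meets $\partial T\times[0,+\infty)$. Because $T$ meets $\partial N$ orthogonally, both are right-angled corners, and the vector field $X$ underlying $\ms F$ was explicitly chosen tangent to $\partial N$ (see \eqref{eq:amended ppt}); consequently the two $\pi/2$-corners align so that, in $\gamma$-coordinates, the boundary of $\mc C(N)$ is the flat submanifold $\partial T\times(-\hat t,\hat t)$, and the corner is promoted to an honest smooth boundary face.

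For the Lipschitz claim, \eqref{eq:decompose metric} together with \eqref{eq:define of h} gives
\[
\gamma^{\ast}h = \begin{cases} g_\tau(x)\oplus f(\ms F(x,\tau))^2\,d\tau^2 & \text{if } \tau\geq 0,\\ g_0(x)\oplus\phi(x)^2\,d\tau^2 & \text{if } \tau\leq 0. \end{cases}
\]
The two pieces agree at $\tau=0$ because $g_\tau|_{\tau=0}=g\llcorner T=g_0$ and, by \eqref{eq:new ppt at T}, $f|_T=\phi$, so $\gamma^{\ast}h$ is continuous. Each piece is smooth on its closed half-space, hence the $\tau$-derivatives of the metric components are bounded on each side, which gives Lipschitz continuity across the interface; away from $T$ the metric is already smooth.

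The only real obstacle is the mismatch of the two one-sided $\tau$-derivatives at $\tau=0$: on the $N$ side $\partial_\tau g_\tau|_{\tau=0}$ encodes the Lie derivative of $g$ along $\ms F_*(\ppt)$ on $T$ and is generically nonzero (it vanishes iff $T$ is totally geodesic), while on the cylinder side the metric is $\tau$-independent. This mismatch is precisely the reason one obtains only Lipschitz, rather than $C^1$, regularity of $h$, and it is the origin of the word ``$C^1$'' (as opposed to ``smooth'') in the statement of the lemma.
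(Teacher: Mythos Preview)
Your proposal is correct and follows the same strategy as the paper: take $\gamma$ as the collar chart around $T$ and verify that the two halves match at $\tau=0$. The paper phrases the matching as equality of the one-sided limits of $\gamma_*(\partial_t)$ (both equal $\phi\,\mathbf n$), while you phrase it as continuity of $\gamma^*h$; these amount to the same computation, and your explicit treatment of the corner at $\partial T$ (straightened because $T$ meets $\partial N$ orthogonally and $X|_{\partial N}\in T(\partial N)$) is a useful point the paper leaves implicit.
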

\begin{proof}
To complete the proof, it suffices to prove that $\gamma $ is a $C^1$ map. Note that $\gamma$ is smooth for $x$ everywhere. Since $\ms F$ and $\varphi$ are diffeomorphisms, $\gamma(x,\cdot)$ is smooth for $t\neq 0$. Now we consider its behavior at $t=0$. On the one hand, by \eqref{eq:new ppt at T}, 
\[ \lim_{t\rightarrow 0^+}\gamma_*(\ppt)=\lim_{t\rightarrow 0^+}\ms F_*(\ppt)=\phi \n. \]
On the other hand, for $t<0$, $\varphi_*(\ppt)$ is parallel to the tangent space the level set $\varphi(T\times \{t\})$ and it has the norm
\[  \Big|\varphi_*(\ppt)\Big|_h =f_0,\]
which implies that
\[ \lim_{t\rightarrow 0^-}\gamma_*(\ppt)=\lim_{t\rightarrow 0^-}\varphi_*(\ppt)=f_0\n=\phi\n.\]
Thus we conclude that $\mc C(N)$ is $C^1$. 

Note that the metric is smooth on $N$ and $T\times [0,+\infty)$. Hence it is Lipschitz on $\mc C(N)$. This completes the proof of Lemma \ref{lem:CN is C1}.
\end{proof}

The following Lemma shows that $(N,\partial N,T,h_\epsilon)$ converges to the non-compact manifold with cylindrical ends $\mc C(N)$ and the convergence is smooth away from $\ms F(T\times \{0\})$.
\begin{lemma}[cf. \cite{Song18}*{Lemma 5}]\label{lem:convergence to CN}
Let $q$ be a point of $N\setminus T$. Then $(N,h_\epsilon,q)$ converges geometrically to $(\mc C(N),h,q)$ in the $C^0$ topology as $\epsilon\to 0$. Moreover, the geometric convergence is smooth outside of $T\subset \mc C(N)$ in the following sense:
\begin{enumerate}
\item Let $q\in N\setminus \ms F(T\times [0,\hat t])$. Then as $\epsilon\rightarrow 0$,
\[(N\setminus\ms F(T\times[0,\epsilon]),h_\epsilon,q)\]
converges geometrically to $(N\setminus T,g,q)$ in the $C^\infty$ topology;
\item Fix any connected component $T_1$ of $T$. Let $q_\epsilon\in\ms F(T_1\times [0,\epsilon])$ be a point at fixed distance $\hat d>0$ from $\ms F(T_1\times \{\epsilon\})$ for the metric $h_\epsilon$, $\hat d$ being independent of $\epsilon$. Then 
\[(\ms F(T_1\times [0,\epsilon)),h_\epsilon,q_\epsilon)\]
subsequently converges geometrically to $(T_1\times(0,+\infty),h,q_\infty)$ in the $C^\infty$ topology, where $h$ is defined as \eqref{eq:define of h}, and $q_\infty$ is a point of $T_1\times (0,+\infty)$ at distance $\hat d$ from $T_1\times \{0\}$.
\end{enumerate}
\end{lemma}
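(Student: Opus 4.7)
The plan is to treat the two regions of $(N,h_\epsilon)$ separately---the \emph{unperturbed} part $N\setminus\ms F(T\times[0,\epsilon])$, where $h_\epsilon=g$ by construction, and the \emph{stretched neck} $\ms F(T\times[0,\epsilon])$, where the conformal factor $\vartheta_\epsilon^2$ expands the $t$-direction---and then patch them together, using Lemma~\ref{lem:CN is C1} at the interface to obtain the global $C^0$ statement.

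For item (1), the point $q\in N\setminus\ms F(T\times[0,\hat t\,])$ stays at uniformly positive $h_\epsilon$-distance from $\ms F(T\times\{\epsilon\})$, because $h_\epsilon=g$ outside the neck. Hence for any fixed $R>0$ and all sufficiently small $\epsilon$, the intersection $B_R^{h_\epsilon}(q)\cap (N\setminus\ms F(T\times[0,\epsilon]))$ equals $B_R^g(q)\cap (N\setminus\ms F(T\times[0,\epsilon]))$ isometrically via the identity map. As $\epsilon\to 0$ these regions exhaust $N\setminus T$, which gives the pointed $C^\infty$ geometric convergence stated in (1).

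For item (2), introduce the reparametrization $\tau_\epsilon:[0,\epsilon]\to[0,L_\epsilon]$, $\tau_\epsilon(t):=\int_t^\epsilon \vartheta_\epsilon(s)\,ds$, with $L_\epsilon:=\int_0^\epsilon\vartheta_\epsilon\to+\infty$, and the diffeomorphism $\Psi_\epsilon:\ms F(T_1\times[0,\epsilon))\to T_1\times(0,L_\epsilon]$ given by $\Psi_\epsilon(\ms F(x,t))=(x,\tau_\epsilon(t))$. Since $d\tau_\epsilon=-\vartheta_\epsilon(t)\,dt$, a direct calculation shows
\[
(\Psi_\epsilon^{-1})^* h_\epsilon \,=\, g_{t(\tau)} \oplus f(x,t(\tau))^2\,d\tau^2,
\]
where $t(\tau)$ is the inverse of $\tau_\epsilon$. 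The basepoint $q_\epsilon$, being at $h_\epsilon$-distance $\hat d$ from $\ms F(T_1\times\{\epsilon\})$, corresponds to $(x_\epsilon,\tau_\epsilon^*)$ with $\tau_\epsilon^* = \hat d/f_0(x_\epsilon)+o(1)$, bounded uniformly in $\epsilon$; after passing to a subsequence, $(x_\epsilon,\tau_\epsilon^*)\to q_\infty=(x_\infty,\hat d/f_0(x_\infty))\in T_1\times(0,+\infty)$. The bound $\vartheta_\epsilon\geq 1$ forces $t(\tau)\in[\max(0,\epsilon-\tau),\epsilon]$, so on any bounded $\tau$-range the value $t(\tau)\to 0$ uniformly, which gives $C^0$ convergence $g_{t(\tau)}\to g\llcorner T$ and $f(\cdot,t(\tau))\to f_0$. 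Each $\partial_\tau$ acting on the metric generates a factor $\vartheta_\epsilon(t(\tau))^{-1}$; the monotonicity $\partial_t\vartheta_\epsilon\leq 0$ combined with $\int_{\epsilon/2}^\epsilon\vartheta_\epsilon\to\infty$ forces $\vartheta_\epsilon(t(\tau))\to\infty$ on any fixed compact subset of $(0,+\infty)$, so every $\tau$-derivative of the metric tends to zero; derivatives in $x$ converge trivially. This yields the desired $C^\infty$ convergence on a fixed neighborhood of $q_\infty$.

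Finally, the pointed $C^0$ convergence of $(N,h_\epsilon,q)$ to $(\mc C(N),h,q)$ follows by gluing: for each $\epsilon$, splicing the identity on $N\setminus\ms F(T\times[0,\hat t\,])$ with $\ms F\circ\Psi_\epsilon^{-1}$ on a truncated cylinder $T\times[0,L_\epsilon]$ produces a homeomorphism $\Phi_\epsilon$ from $(N\setminus\ms F(T\times[0,\hat t\,]))\cup T\times[0,L_\epsilon]\subset\mc C(N)$ onto $N$ (continuity at the interface is where Lemma~\ref{lem:CN is C1} is invoked). For any $R$, taking $\epsilon$ small enough that $L_\epsilon$ exceeds the $h$-diameter of $B_R^h(q)\cap\mc C(N)$ ensures $B_R^h(q)$ lies in the domain of $\Phi_\epsilon$, and parts (1)--(2) give $\Phi_\epsilon^* h_\epsilon\to h$ uniformly there. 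The main obstacle is the $\tau$-derivative decay in item (2): rigorously controlling $\vartheta_\epsilon(t(\tau))^{-1}\to 0$ on a bounded interval of $\tau>0$ is the one step where the specific structure of $\vartheta_\epsilon$ (monotonicity plus the integral blow-up hypothesis) plays a decisive role.
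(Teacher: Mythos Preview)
Your approach is the same as the paper's: reparametrize by the arc-length variable (your $\tau$ is the paper's $-s$), so that $h_\epsilon$ takes the form $g_{t(\tau)}\oplus f(\cdot,t(\tau))^2\,d\tau^2$, and conclude from $t(\tau)\to 0$ uniformly on bounded $\tau$-intervals. The paper's proof is in fact briefer than yours---it records only the convergence of the metric coefficients and says nothing about basepoints, $\tau$-derivatives, or the global $C^0$ gluing---so on those points your write-up is the more complete one.

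One genuine wrinkle, however: the claim that ``each $\partial_\tau$ acting on the metric generates a factor $\vartheta_\epsilon(t(\tau))^{-1}$'' is only correct at first order. The second $\tau$-derivative of $g_{t(\tau)}$ contains the term $(\partial_t g_t)\cdot t''(\tau)$ with $t''(\tau)=-\vartheta_\epsilon'(t)/\vartheta_\epsilon(t)^3$, and the three listed properties of $\vartheta_\epsilon$ impose no bound on $\vartheta_\epsilon'$; your monotonicity-plus-integral argument controls $\vartheta_\epsilon(t(\tau))$ itself (indeed $\vartheta_\epsilon(t(\tau))\geq \tau/\epsilon$ follows exactly as you indicate) but not its derivative. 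In practice this is harmless, since $\vartheta_\epsilon$ is a construction at our disposal and can be chosen with higher derivatives under control, and the paper (deferring to Song) does not address this point either---but as written your argument does not close the $C^\infty$ claim from the stated hypotheses alone.
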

\begin{proof}
The first item follows from that $\he=g$ on $N\setminus\ms F(T\times [0,\epsilon])$. We now prove the second one. Define the coordinate $s$ by
\[s(\ms F(x,t)):=-\int_t^\epsilon\vte(u)du.\]
Then $(\vartheta^2_\epsilon dt)^2=ds^2$ and $|\nabla^\epsilon s|_\he=|\nabla t|_g$. Hence 
\[ \he(q)=g_s(q)\oplus (f(q)ds)^2.\]	
Note that $g_t(\ms F(x,t))\rightarrow g_0(\ms F(x,0))=g\llcorner T$ and $f(\ms F(x,t))\rightarrow f(\varphi(x,0))$ as $\epsilon\rightarrow0$. Thus we conclude that 
\[ \he(\ms F(x,s))\rightarrow g\llcorner T\oplus (f_0ds)^2=h.\]
	\end{proof}

In the following lemma, we describe more about the convergence in a neighborhood of $\ms F(T\times \{0\})$ by finding explicit local charts.
\begin{lemma}[cf. \cite{Song18}*{Lemma 6}]
There exists $\eta>0$ such that for each $\epsilon\in (0,\hat t/2)$ small, there is an embedding $\sigma_\epsilon: T\times [-\hat t/2,\hat t/2]\rightarrow N$ satisfying the following properties:
\begin{enumerate}
\item $\sigma_\epsilon(T\times \{0\})=\ms F(T\times \{\epsilon\})$ and 
\[ \sigma_\epsilon (T\times [-\hat t/2 ,\hat t/2])=\{q\in N: |s(q)|\leq \hat t/2\} ;\]
\item $\|\sigma_\epsilon^*\he\|_{C^1(T\times [-\hat t/2,\hat t/2])}<\eta$,  where $\|\cdot\|_{C^1(T\times [-\hat t/2,\hat t/2])}$ is computed under the product metric $h'=g\llcorner T\oplus ds^2$;
\item the metrics $\sigma_\epsilon^*\he$ converge in the $C^0$ topology to $\gamma^*h\llcorner T\times [-\hat t/2,\hat t/2]$ (see \eqref{eq:define gamma} for the definition of $\gamma$).
\end{enumerate}
\end{lemma}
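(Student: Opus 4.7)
The natural candidate for $\sigma_\epsilon$ is to reparametrize $\ms F$ by the $h_\epsilon$-arclength coordinate $s$. Extend $\vartheta_\epsilon$ to $[0,\hat t]$ by setting $\vartheta_\epsilon\equiv 1$ on $[\epsilon,\hat t]$ (consistent with its definition since $\vartheta_\epsilon\equiv 1$ near $\epsilon$). Since $\vartheta_\epsilon\geq 1$, the function $t\mapsto -\int_t^\epsilon\vartheta_\epsilon(u)\,du$ is a smooth strictly increasing diffeomorphism; let $t_\epsilon$ denote its inverse, and define
\[
\sigma_\epsilon(x,s):=\ms F(x,t_\epsilon(s)),\qquad (x,s)\in T\times[-\hat t/2,\hat t/2].
\]
For this to land in $\ms F(T\times[0,\hat t\,])$ we need $t_\epsilon(\hat t/2)\leq \hat t$, which holds since $t_\epsilon(\hat t/2)=\epsilon+\hat t/2<\hat t$, and $t_\epsilon(-\hat t/2)\geq 0$, which holds for all sufficiently small $\epsilon$ by the divergence condition $\int_{\epsilon/2}^\epsilon\vartheta_\epsilon\to+\infty$. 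Property (1) is then immediate: $t_\epsilon(0)=\epsilon$ gives $\sigma_\epsilon(T\times\{0\})=\ms F(T\times\{\epsilon\})$, and the definition of $t_\epsilon$ identifies the image with $\{|s|\leq\hat t/2\}$.

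The pullback metric is the key computation. Using the product decompositions $g=g_t\oplus(f\,dt)^2$ and $h_\epsilon=g_t\oplus(\vartheta_\epsilon f\,dt)^2$ (the second reducing to the first on $\{t\geq\epsilon\}$ with our extension), together with $dt_\epsilon/ds=1/\vartheta_\epsilon(t_\epsilon(s))$, one gets
\[
\sigma_\epsilon^*h_\epsilon \;=\; g_{t_\epsilon(s)}(x)\;+\;f(x,t_\epsilon(s))^2\,ds^2
\]
on all of $T\times[-\hat t/2,\hat t/2]$. The telescoping of $\vartheta_\epsilon$ in the $s$-direction is the whole point of the coordinate change.

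For property (2), the coefficients $g_t$ and $f$ are smooth on the compact set $T\times[0,\hat t]$ and hence uniformly $C^1$-bounded there. The pullback changes only the $s$-derivative, which picks up the factor $t_\epsilon'(s)=1/\vartheta_\epsilon(t_\epsilon(s))\in(0,1]$, so no new blow-up is introduced. Thus $\|\sigma_\epsilon^*h_\epsilon\|_{C^1(T\times[-\hat t/2,\hat t/2])}$, measured against $h'=g\llcorner T\oplus ds^2$, is bounded by a constant depending only on the fixed data $g_t,f$ on $T\times[0,\hat t]$, yielding a uniform $\eta$.

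For property (3), I verify pointwise convergence and promote it to $C^0$ using the uniform $C^1$ bound of step (2) (Arzel\`a--Ascoli). For $s>0$, one has $t_\epsilon(s)=\epsilon+s\to s$, so $\sigma_\epsilon^*h_\epsilon\to g_s+f(x,s)^2\,ds^2$, which matches $\gamma^*h$ since $\gamma(x,s)=\ms F(x,s)\in N$ there. For $s<0$, the divergence condition $\int_{\epsilon/2}^\epsilon\vartheta_\epsilon\to+\infty$ forces $t_\epsilon(s)\to 0$; hence $g_{t_\epsilon(s)}\to g_0=g\llcorner T$ and $f(x,t_\epsilon(s))\to f_0(x)$, giving the limit $g\llcorner T+f_0^2\,ds^2$, which is precisely $\gamma^*h$ on the cylindrical side. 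The two limits agree at $s=0$ (both equal $g\llcorner T+f_0^2\,ds^2$), so $\gamma^*h$ is continuous across $s=0$ and the convergence holds in $C^0$ throughout. The only genuinely delicate point is controlling $t_\epsilon(s)$ for $s<0$ small, but this is exactly where the prescribed divergence $\int_{\epsilon/2}^\epsilon\vartheta_\epsilon\to\infty$ is used, both to keep $\sigma_\epsilon$ well-defined and to force $t_\epsilon(s)\to 0$.
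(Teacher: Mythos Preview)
Your proposal is correct and follows essentially the same route as the paper: you define $\sigma_\epsilon(x,s)=\ms F(x,t_\epsilon(s))$ with $t_\epsilon$ the inverse of $t\mapsto -\int_t^\epsilon\vartheta_\epsilon$, exactly as in the paper's proof, and then verify items (1)--(3). Your argument is in fact more fleshed out than the paper's (which dispatches (2) in one clause ``comes from the fact that $\vartheta_\epsilon\geq 1$'' and (3) by citing the earlier lemmas); your explicit pullback formula $\sigma_\epsilon^*h_\epsilon=g_{t_\epsilon(s)}+f(\cdot,t_\epsilon(s))^2\,ds^2$ and the chain-rule bound $t_\epsilon'(s)=1/\vartheta_\epsilon\in(0,1]$ are the right ingredients. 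One small simplification: for $s<0$ you do not actually need the divergence condition to show $t_\epsilon(s)\to 0$, since $s<0$ already forces $0\le t_\epsilon(s)<\epsilon\to 0$; the divergence condition is only used (as you also note) to guarantee the map is well-defined down to $s=-\hat t/2$.
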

\begin{proof}
Given $\epsilon>0$, recall that 
\[s(t)=-\int_t^\epsilon \vartheta_\epsilon (u)du. \]
Then we define the embedding map $\sigma_\epsilon:T\times [-\hat t/2,\hat t/2]\to N$ by 
\[  \sigma_\epsilon(x,u)= \ms F(x, s^{-1}(u)). \] 
The first item follows immediately. The second one comes from the fact that $\vartheta_\epsilon\geq 1$. The last one follows from Lemmas \ref{lem:CN is C1} and \ref{lem:convergence to CN} and the fact that $g=\he$ on $N\setminus \ms F(T\times [0,\epsilon])$.	
	
	\end{proof}

\subsection{Notations from geometric measure theory}
We now recall the formulation in \cite{LMN16}. Let $(M,\partial M,g)\subset \mb R^L$ be a compact Riemannian manifold with piecewise smooth boundary. Let $\mc R_k(M;\mb Z_2)$ (resp. $\mc R_{k}(\partial M)$) be the space of $k$-dimensional rectifiable currents in $\mb R^L$ with coefficients in $\mb Z_2$ which are supported in $M$ (resp. $\partial M$). Denote by $\mf M$ the mass norm. We now recall the formulation in \cite{LZ16} using equivalence classes of integer rectifiable currents. Let 
\begin{equation}
\label{eq:def from integer rect}
 Z_k(M,\partial M;\mb Z_2):=\{T\in \mc{R}_k(M;\mb Z_2) : \spt(\partial T)\subset \partial M \}.
 \end{equation}
We say that two elements $S_1,S_2\in Z_k(M,\partial M;\mb Z_2)$ are equivalent if $S_1-S_2\in \mc{R}_k(\partial M;\mb Z_2)$. Denote by $\mc{Z}_k(M,\partial M; \mb Z_2)$ the space of all such equivalence classes. For any $\tau\in \mc{Z}_k(M,\partial M; \mb Z_2)$, we can find a unique $T\in \tau$ such that $T\llcorner \partial M=0$. We call such $S$ the \emph{canonical representative} of $\tau$ as in \cite{LZ16}. For any $\tau\in \mc{Z}_k(M,\partial M;\mb Z_2)$, its mass and flat norms are defined by 
\[\mf{M}(\tau):=\inf\{\mf{M}(S) : S\in \tau\}\quad \text{ and }\quad \mc{F}(\tau):=\inf\{\mc{F}(S) : S\in \tau\}.\]
The support of $\tau\in \mc{Z}_k(M,\partial M;\mb Z_2)$ is defined by 
\[\spt (\tau):=\bigcap_{S\in \tau}\spt (S).\]
By \cite{LZ16}*{Lemma 3.3}, we know that for any $\tau \in \mc{Z}_k(M,\partial M;\mb Z_2)$, we have $\mf{M}(S)=\mf{M}(\tau)$ and $\spt (\tau)=\spt (S)$, where $S$ is the canonical representative of $\tau$.

Recall that the varifold distance function $\mf{F}$ on $\mc{V}_k(M)$ is defined in \cite{Pi}*{2.1 (19)}, which induces the varifold weak topology on the set $\mc{V}_k(M)\cap \{V : \|V\|(M)\leq c\}$ for any $c$. We also need the $\mf{F}$-metric on $\mc{Z}_k(M,\partial M;\mb Z_2)$ defined as follows: for any $\tau, \sigma\in \mc{Z}_k(M,\partial M;\mb Z_2)$ with canonical representatives $S_1\in \tau$ and $S_2\in \sigma$, the $\mf{F}$-metric of $\tau$ and $\sigma$ is 
\[\mf{F}(\tau,\sigma):=\mc{F}(\tau-\sigma)+\mf{F}(|S_1|,|S_2|),\]
where $\mf{F}$ on the right hand side denotes the varifold distance on $\mc{V}_k(M)$. 

For any $\tau\in \mc{Z}_k(M,\partial M; \mb Z_2)$, we define $|\tau|$ to be $|S|$, where $S$ is the unique canonical representative of $\tau$ and $|S|$ is the rectifiable varifold corresponding to $S$.

We assume that $\mc{Z}_k(M,\partial M;\mb Z_2)$ has the flat topology induced by the flat metric. With the topology of mass norm or the $\mf{F}$-metric, the space will be denoted by  $\mc{Z}_k(M,\partial M;\mf{M};\mb Z_2)$ or  $\mc{Z}_k(M,\partial M;\mf{F};\mb Z_2)$.

Let $X$ be a finite dimensional simplicial complex. Suppose that $\Phi:X\to \mc{Z}_n(M,\partial M;\mf{F};\mb Z_2)$ is a continuous map with respect to the $\mf{F}$-metric. We use $\Pi$ to denote the set of all continuous maps $\Psi:X\to \mc{Z}_n(M,\partial M;\mf{F};\mb Z_2) $ such that $\Phi$ and $\Psi$ are homotopic to each other in the flat topology. The \emph{width} of $\Pi$ is defined by 
\[\mf{L}(\Pi)=\inf_{\Phi\in \Pi}\sup_{x\in X} \mf{M}(\Phi(x)).\]

Given $p\in \mb N$, a continuous map in
the flat topology
\[\Phi : X \rightarrow \mc Z_{n} (M, \partial M;\mb Z_2)\]
is called a {\em $p$-sweepout} if the $p$-th cup power of $\lambda = \Phi^*(\bar{\lambda})$ is non-zero in $H^{p}(X; \mb Z_2 )$ where $0 \neq \bar{\lambda} \in H^1(\mc Z_{n} (M, \partial M;\mb Z_2);\mb Z_2) \cong \mb Z_2$. Denote by $\mc P_p(M)$ the set of all $p$-sweepouts that are continuous in the flat topology and {\em have no concentration of mass} (\cite{MN17}*{\S 3.7}), i.e.
\[\lim_{r\rightarrow 0}\sup\{\mf M(\Phi(x)\cap B_r(q)):x\in X,q\in M\}=0.\]

In \cite{MN17} and \cite{LMN16}, the {\em $p$-width} is defined as
\begin{equation}
\omega_p(M;g):=\inf_{\Phi\in\mc P_p}\sup\{\mf M(\Phi(x)):x\in \mathrm{dmn}(\Phi)\}.
\end{equation}

\begin{remark}
In this paper, we used the integer rectifiable currents, which is the same with \cite{LZ16}. However, the formulations are equivalent to that in \cite{LMN16}; see \cite{GLWZ19}*{Proposition 3.2} for details.
\end{remark}

For the non-compact setting, the following definition does not depend on the choice of the exhaustion sequences by \cite{LMN16}*{Lemma 2.15 (1)}.
\begin{definition}[\cite{Song18}*{Definition 7}]
Let $(\hat N^{n+1},g)$ be a complete non-compact Lipschitz manifold. Let $K_1\subset K_2\subset\cdots\subset K_i\subset\cdots$ be an exhaustion of $\hat N$ by compact $(n+1)$-submanifolds with piecewise smooth boundary. The $p$-width of $(\hat N,g)$ is the number
\[\omega_p(\hat N;g)=\lim_{i\rightarrow\infty }\omega_p(K_i;g)\in[0,+\infty].\]
\end{definition}

\subsection{Min-max theory for manifolds with boundary and ends}
Let $(N,\partial N,T,g)$ be a compact manifold with boundary and portion such that $T$ is a free boundary minimal hypersurface in $(N,\partial N,g)$ with a contracting neighborhood in one side in $N$.  Let $T_1,\cdots, T_m$ be the connected components of $T$ and suppose that $T_1$ has the largest area among their components:
\[\Area(T_1)\geq \Area(T_j) \ \ \text{ for all }  \ \ j\in\{1,\cdots, m\}.\]

The purpose of this subsection is to prove the $p$-width $\omega_p(\mc C(N))$ is associated with almost properly embedded free boundary minimal hypersurfaces with multiplicities.

We give the upper and lower bounds for $\omega_p(\mc C(N);h)$.
\begin{lemma}[cf. \cite{Song18}*{Theorem 8}]\label{lem:growth of width}
There exists a constant $C$ depending on $h$ such that for all $p\in\{1,2,3,\cdots\}$:
\begin{gather}
\omega_{p+1}(\mc C(N))-\omega_p(\mc C(N))\geq \Area(T_1);\label{eq:gap of wp}\\
p\cdot \Area(T_1)\leq \omega_p(\mc C( N))\leq p\cdot\Area(T_1)+C p^{\frac{1}{n+1}}.\label{eq:upper bound of wp}
\end{gather}
\end{lemma}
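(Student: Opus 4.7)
The strategy parallels Song's proof of \cite{Song18}*{Theorem 8} in the closed setting, adapted to the present manifold-with-cylindrical-ends structure. I would fix the compact exhaustion $K_i := N \cup_\varphi (T \times [0, i])$ of $\mc C(N)$, so that by definition $\omega_p(\mc C(N); h) = \lim_{i \to \infty} \omega_p(K_i; h)$; it then suffices to control $\omega_p(K_i)$ uniformly in $i$ (large).

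For the upper bound in \eqref{eq:upper bound of wp}, I would construct explicit $p$-sweepouts of $K_i$ exploiting the product structure of the cylinder over $T_1$. Take $X = \mb{RP}^p$ with homogeneous coordinates $[v_0 : \cdots : v_p]$ and set $P_v(t) = \sum_{k=0}^{p} v_k t^k$. For $i$ large, let $A_{[v]} \subset K_i$ be the region consisting of $N$, all cylinders $T_j \times [0, i]$ for $j \neq 1$, together with $\{(x,t) \in T_1 \times [0, i] : P_v(t) > 0\}$. The assignment
\[
\Phi_p : [v] \longmapsto \partial A_{[v]} \in \mc Z_n(K_i, \partial K_i; \mb Z_2)
\]
is continuous in the flat topology, has no concentration of mass, and its interior boundary consists of at most $p+1$ horizontal slices of $T_1$ (one per real root of $P_v$ in $[0, i]$, plus possibly one at $t = 0$), so $\mf M(\Phi_p([v])) \leq (p+1)\Area(T_1)$. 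By the classical polynomial-level-set calculation of \cite{LMN16}, $\Phi_p^{*}\bar{\lambda}$ is the generator of $H^1(\mb{RP}^p; \mb Z_2)$, making $\Phi_p$ a $p$-sweepout of $K_i$. Thus $\omega_p(\mc C(N)) \leq (p+1)\Area(T_1) \leq p\cdot\Area(T_1) + C p^{1/(n+1)}$ for a suitably chosen $C$.

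For the lower bound $p\cdot\Area(T_1) \leq \omega_p(\mc C(N))$, I would employ Almgren slicing combined with a Lusternik--Schnirelmann cup-length argument in the spirit of \cite{LMN16}. Given any $p$-sweepout $\Phi : X \to \mc Z_n(K_i; \mb Z_2)$, slicing along the cylindrical coordinate $s \in [0, i]$ produces, for almost every $s$, a cycle on $T \times \{s\}$; the non-vanishing of $\Phi^{*}\bar{\lambda}^p$ forces at least $p$ independent regular levels $s_1 < \cdots < s_p$ to carry slices of mass at least $\Area(T_1)$ each, so $\sup_{x \in X} \mf M(\Phi(x)) \geq p\cdot\Area(T_1)$ in the limit $i \to \infty$. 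For the gap \eqref{eq:gap of wp}, I would use a cut-and-translate surgery: given a near-optimal $(p+1)$-sweepout $\Phi_{p+1}$, translate one of its horizontal slices arbitrarily far into the infinite end (exploiting the non-compactness of $\mc C(N)$) and excise it, yielding a $p$-sweepout whose maximal mass is at most $\sup\mf M(\Phi_{p+1}) - \Area(T_1) + o_i(1)$; passing to the infimum then gives $\omega_{p+1} \geq \omega_p + \Area(T_1)$.

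The main obstacle lies in the lower bound and the gap, both of which require Song's cup-length/slicing machinery to be faithfully transferred to the cylindrical-ends setting. The new ingredient compared to \cite{Song18} is that cycles in $\mc Z_n(K_i, \partial K_i; \mb Z_2)$ may have components touching $\partial N$ along $\partial T$; Lemma \ref{lem:no mass accumulate} rules out mass concentration at these corners and guarantees that the slicing and surgery operations behave as in the smooth closed case.
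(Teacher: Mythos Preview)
Your upper-bound construction does not produce a $p$-sweepout of $K_i$. Because the ``fixed'' region $N\cup\bigcup_{j\neq 1}T_j\times[0,i]$ is always included in $A_{[v]}$, one has $A_{-v}=A_v\bigtriangleup\big(T_1\times[0,i]\big)$; thus $\partial A_v$ and $\partial A_{-v}$ differ exactly by the slice $T_1\times\{0\}$ whenever $P_v(0)\neq 0$, so $\Phi_p$ is not even well-defined on $\mb{RP}^p$. Any natural repair (e.g.\ working only with the zero set of $P_v$) yields a family supported in the $T_1$-cylinder, and along a generic $\mb{RP}^1$ the swept region is $T_1\times[0,i]\subsetneq K_i$; hence the loop is trivial in $\pi_1\big(\mc Z_n(K_i,\partial K_i;\mb Z_2)\big)\cong H_{n+1}(K_i,\partial K_i;\mb Z_2)$ and $\Phi_p^*\bar\lambda=0$. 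The paper avoids this by not building an explicit sweepout of $K_i$: it invokes the gluing inequality \cite{LMN16}*{Lemma 4.4} to get $\omega_p(\mc C(N))\leq\omega_p(N)+\omega_p(T\times\mb R)$, then uses the Weyl Law for $\omega_p(N)$ and the separately established identity $\omega_p(T\times\mb R)=p\cdot\Area(T_1)$.

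Your ``cut-and-translate surgery'' for \eqref{eq:gap of wp} is not an argument: a general $(p+1)$-sweepout has no distinguished ``horizontal slice'' that can be excised, and there is no mechanism turning an arbitrary $(p+1)$-sweepout into a $p$-sweepout by removing $\Area(T_1)$ of mass. The paper (following Song) uses the Lusternik--Schnirelmann inequality directly: for large $R$ the ball $B_h(q,3R)$ contains disjoint copies of $B_h(q,R)$ and $T_1\times[0,R]$, whence $\omega_{p+1}(B_h(q,3R))\geq\omega_p(B_h(q,R))+\omega_1(T_1\times[0,R])$, and one lets $R\to\infty$. The same inequality, iterated, gives the lower bound in \eqref{eq:upper bound of wp}; your slicing/coarea sketch does not supply the needed inequality $\mathrm{(mass\ of\ slice)}\geq\Area(T_1)$.

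What is genuinely new in the free-boundary setting, and what your final paragraph correctly anticipates, is the computation $\omega_1(T\times[-R,R];h)=\Area(T_1)$. The paper carries this out by applying Li--Zhou regularity away from the corners $\partial T\times\{\pm R\}$, then using Lemma~\ref{lem:no mass accumulate} to rule out the min-max varifold being supported on the corner set, and finally the monotonicity formula and maximum principle to force the varifold to be a horizontal slice. This $\omega_1$ computation is the seed for all the Lusternik--Schnirelmann steps above.
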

\begin{proof}
The proof here actually is the same with \cite{Song18}*{Theorem 8}, which is an application of Lusternick-Schnirelman Inequalities in \cite{LMN16}*{Section 3.1}. We sketch the idea here.

Firstly, we know that $\omega_1(T\times[-R,R];h)$ is realized by a varifold $V_R$. Then by \cite{LZ16}, $V_R$ is a free boundary minimal hypersurface when restricted outside $\partial T\times\{\pm R\}$.  Moreover, by \cite{LZ16} again, the first variation of $V_R$ vanishes along each vector field $X$ satisfying that $X$ is tangential on $T\times \{\pm R\}$ and $\partial T\times (-R,R)$. Since each slice of $T\times \{t\}$ is minimal and stable, then by Lemma \ref{lem:no mass accumulate}, for any neighborhood $U$ of $\partial T\times\{\pm R\}$, the support of $V_R$ intersects $U\setminus\partial T\times\{\pm R\}$ if it intersects $\partial T\times\{\pm R\}$. Together with the monotonicity formula and maximum principle (see \cite{Song18}*{Theorem 8} for details), we always have 
\[\omega_1(T\times [-R,R])=\Area(T_1)\]
for sufficiently large $R$. Letting $R\rightarrow \infty$, we conclude that
\[\omega_1(T\times \mb  R;h)=\Area(T_1).\]
Then by Lusternick-Schnirelman Inequalities,
\[\omega_{p+1}(T_1\times [0,2R];h)\geq \omega_p(T_1\times [0,R];h)+\omega_1(T_1\times (R,2R];h).\]
Letting $R\rightarrow \infty$,
\[\omega_{p+1}(T\times \mb R;h)\geq \omega_p(T_1\times \mb R;h)+\Area(T_1).\]
By induction, $\omega_p(T_1\times \mb R)\geq p\cdot \Area(T_1)$. On the other hand, by direct construction, we have that $\omega_p(T_1\times \mb R)\leq p\cdot \Area(T_1)$. Therefore,
\[\omega_p(T_1\times \mb R)= p\cdot \Area(T_1).\]

We now prove \eqref{eq:gap of wp}. Fix $q\in N$ and take $R$ large enough so that $B(q,3R)$ contains two disjoint part $B(q,R)$ and $T_1\times [0,R]$. Then by Lusternick-Schnirelman Inequalities,
\[\omega_{p+1}(B(q,3R);h)\geq \omega_p(B(q,R);h)+\omega_1(T_1\times [0,R];h).\]
Letting $R\rightarrow 0$, then we have 
\[\omega_{p+1}(\mc C)\geq \omega_p(\mc C)+\Area(T_1),\]
which is exactly the desired inequality.

In the next, we prove \eqref{eq:upper bound of wp}. Clearly, the first half follows from \eqref{eq:gap of wp}. Using \cite{LMN16}*{Lemma 4.4} (see also \cite{Song18}*{Proof of Theorem 8}), 
\[\omega_p(\mc C)\leq \omega_p(N)+\omega_p(T\times \mb R)\leq p\cdot \Area (T_1)+C\cdot p^{\frac{1}{n+1}}.\]
Here the last inequality we used the Weyl Law of $\omega_p(N)$ by Liokumovich-Marques-Neves \cite{LMN16}*{\S 1.1}. This finishes the proof.
\end{proof}

Let $h_\epsilon$ be a the metric constructed in Subsection \ref{subsection:add cylinders}. Denote by $N_\epsilon=N\setminus \ms F(T\times[0,\epsilon/2))$, which is a compact manifold with piecewise smooth boundary $\wti \partial N_\epsilon$. For simplicity, denote by $T_\epsilon= \ms F(T\times \{\epsilon/2\})$. Although there is no general regularity for min-max theory in such a space, we can use the uniform upper bound of the width and the monotonicity formulas \citelist{\cite{GLZ16}*{Theorem 3.4}\cite{Si}*{\S 17.6}} to prove that $\omega_p(N_\epsilon;\he)$ is realized by embedded free boundary minimal hypersurfaces.
\begin{theorem}\label{thm:min-max for Nepsilon}
Fix $p\in\mb N$. For $\epsilon>0$ small enough, there exist disjoint, connected, almost properly embedded, free boundary minimal hypersurfaces $\Gamma_1,\cdots,\Gamma_N$ contained in $N_\epsilon\setminus \ms F(T\times \{\epsilon/2\})$ and positive integers $m_1,\cdots,m_N$ such that
\[\omega_p(N_\epsilon;\he)=\sum_{j=1}^Nm_j\cdot\Area(\Gamma_j)\ \ \text{ and } \  \ \sum_{j=1}^N\mathrm{Index}(\Gamma_j)\leq p.\]
\end{theorem}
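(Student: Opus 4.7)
The strategy is to run the Almgren--Pitts/Li--Zhou $p$-parameter min-max on the compact manifold with piecewise smooth boundary $(N_\epsilon,\wti\partial N_\epsilon,h_\epsilon)$, where $\wti\partial N_\epsilon=(\partial N\cap N_\epsilon)\cup T_\epsilon$ has corner locus $\partial T_\epsilon$, and then to analyze the resulting stationary varifold separately at the smooth boundary, at the corner, and on the portion $T_\epsilon$. For fixed $p$, the min-max construction produces a stationary integral $n$-varifold $V_\epsilon$ with $\|V_\epsilon\|(N_\epsilon)=\omega_p(N_\epsilon;h_\epsilon)$ whose first variation vanishes along every vector field tangential to $\wti\partial N_\epsilon$, together with the index bound $\leq p$ via the Marques--Neves index argument in its free boundary adaptation.

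Next I establish smoothness. At every smooth point of $\wti\partial N_\epsilon$---the interiors of $\partial N\cap N_\epsilon$ and of $T_\epsilon$---Li--Zhou's boundary regularity theorem applies (this is precisely the observation flagged in the introduction: their regularity holds at every smooth boundary point), and shows that $V_\epsilon$ is locally a smooth almost properly embedded free boundary minimal hypersurface. For the corner $\partial T_\epsilon$ I invoke the extended form of Lemma~\ref{lem:no mass accumulate} given by the remark following it; its hypothesis is met because a neighborhood of $T_\epsilon$ is smoothly foliated by free boundary hypersurfaces $\ms F(T\times\{t\})$ under the metric $h_\epsilon$. This rules out mass concentration purely on $\partial T_\epsilon$ and, combined with the smoothness off the corner, yields full regularity of $\spt V_\epsilon$ up to $\partial N\cap N_\epsilon$.

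The main obstacle I anticipate is verifying $\spt V_\epsilon\subset N_\epsilon\setminus T_\epsilon$ for all sufficiently small $\epsilon$. I plan to argue by contradiction: suppose along some sequence $\epsilon_k\to 0$ the support of $V_{\epsilon_k}$ meets the interior of $T_{\epsilon_k}$. Using the monotonicity formula \cite{GLZ16}*{Theorem~3.4}, \cite{Si}*{\S 17.6} at smooth boundary points together with the uniform second fundamental form bound on the leaves from Lemma~\ref{lem:2 form and mean curvatue in new metric}(3), one extracts a geometric limit of $(N_{\epsilon_k},h_{\epsilon_k},V_{\epsilon_k})$ pointed at such intersection points which, by Lemma~\ref{lem:convergence to CN}, sits in the cylindrical end $T\times[0,\infty)$ of $\mc C(N)$. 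Lemma~\ref{lem:2 form and mean curvatue in new metric}(2) ensures the mean curvatures of the leaves tend to zero, so the limit is a stationary integral varifold in the flat cylinder with bounded index; the only such free boundary minimal representatives are totally geodesic slices $T\times\{s\}$. Comparison with the upper bound $\omega_p(N_\epsilon;h_\epsilon)\leq p\cdot\Area(T_1)+Cp^{1/(n+1)}$ of Lemma~\ref{lem:growth of width}, together with the gap inequality~\eqref{eq:gap of wp}, then lets me construct a competitor sweepout by pushing the offending slice strictly into $N_\epsilon$, contradicting the minimality of $\omega_p(N_\epsilon;h_\epsilon)$ for $\epsilon$ small.

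Once $\spt V_\epsilon\subset N_\epsilon\setminus T_\epsilon$ is established, the support decomposes into finitely many disjoint connected smooth almost properly embedded free boundary minimal hypersurfaces $\Gamma_1,\ldots,\Gamma_N$, and $V_\epsilon=\sum_{j=1}^N m_j|\Gamma_j|$ with positive integer multiplicities $m_j$. The mass identity $\omega_p(N_\epsilon;h_\epsilon)=\sum_{j}m_j\Area(\Gamma_j)$ and the index bound $\sum_{j}\mathrm{Index}(\Gamma_j)\leq p$ are then immediate consequences of the Almgren--Pitts construction and the index estimate applied component-wise.
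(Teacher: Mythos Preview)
Your overall framework---run $p$-parameter min-max on $(N_\epsilon,h_\epsilon)$, invoke Li--Zhou regularity at smooth boundary points, use Lemma~\ref{lem:no mass accumulate} at the corner $\partial T_\epsilon$, and then argue that the support avoids $T_\epsilon$---matches the paper. The gap is in the last step.

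Your argument for $\spt V_\epsilon\cap T_\epsilon=\emptyset$ is both over-engineered and incomplete. Taking a geometric limit into the cylindrical end and asserting that ``the only such free boundary minimal representatives are totally geodesic slices $T\times\{s\}$'' is not justified: the cylinder $(T\times[0,\infty),h)$ can carry other complete minimal hypersurfaces (e.g.\ $S\times[0,\infty)$ for $S$ a free boundary minimal hypersurface of $T$), and even restricting to finite mass and bounded index you have provided no classification. More seriously, the ``competitor sweepout'' step---``pushing the offending slice strictly into $N_\epsilon$''---is not an argument: you have not explained how knowledge of the limit varifold produces a $p$-sweepout of $N_\epsilon$ with strictly smaller maximal mass, and there is no obvious mechanism for this.

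The paper's argument is direct and avoids all of this. The point you are not exploiting is Lemma~\ref{lem:2 form and mean curvatue in new metric}(1): the foliation $\{\ms F(T\times\{t\})\}$ is mean-convex \emph{towards} $T$ under $h_\epsilon$ (you cite parts (2) and (3) of that lemma but never use the sign of the mean curvature). If $\spt V_\epsilon$ meets $T_\epsilon$---or, after Lemma~\ref{lem:no mass accumulate}, any point near $\partial T_\epsilon$---then the strong maximum principle with these mean-convex barriers forces the support to cross every leaf and reach $\ms F(T\times\{\hat t\})$. But by the choice of $\vartheta_\epsilon$ the $h_\epsilon$-distance from $T_\epsilon$ to $\ms F(T\times\{\hat t\})$ tends to infinity as $\epsilon\to 0$, so the monotonicity formula along this tube forces $\|V_\epsilon\|(N_\epsilon)\to\infty$, contradicting the uniform bound $\omega_p(N_\epsilon;h_\epsilon)\to\omega_p(\mc C(N);h)<\infty$. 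That is the whole argument: no limit into the cylinder, no classification, no competitor sweepout.
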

\begin{proof}
Choose a sequence $\{\Phi_i\}_{i\in\mb N}\subset \mc P_p(N_\epsilon)$ such that
\begin{equation}\label{eq:sequence of sweepouts realize width}
\lim_{i\rightarrow\infty} \sup\{\mf M(\Phi_i(x)):x\in X_i = \mathrm{dmn}(\Phi_i)\}=\omega_k(N_\epsilon; g).
\end{equation}
Without loss of generality, we can assume that the dimension of $X_i$ is $p$ for all $i$ (see \cite{MN16}*{\S 1.5} or \cite{IMN17}*{Proof of Proposition 2.2}).

By the Discretization Theorem \cite{LZ16}*{Theorem 4.12} and the Interpolation Theorem \cite{GLWZ19}*{Theorem 4.4}, we can assume that $\Phi_i$ is a continuous map to $\mc Z_n(N_\epsilon,\wti \partial N_\epsilon;\mb Z_2)$ in the $\mf F$-metric. Denote by $\Pi_i$ the homotopy class of $\Phi_i$. By \cite{GLWZ19}*{Proposition 7.3, Claim 1}, 
\[\lim_{i\rightarrow\infty}\mf L(\Pi_i)=\omega_p(N_\epsilon;h_\epsilon).\]
For any $p\in\{1,2,3,\cdots\}$, by \cite{MNS17}*{Lemma 1} and Lemma \ref{lem:convergence to CN},
\[\lim_{\epsilon\rightarrow\infty}\omega_p(N_\epsilon;\he)=\omega_p(\mc C(N);h).\]
Hence we can assume $\mf L(\Pi_i)$ has a uniform upper bound not depending on $i$ or $\epsilon$.

\medskip
We first prove that $\mf L(\Pi_i)$ is realized by free boundary minimal hypersurfaces. Without loss of generality, we assume that 
\[\mf L(\Pi_i)<\omega_p(N_\epsilon;\he)+1.\]
By the work of Li-Zhou \cite{LZ16}*{Theorem 4.21}, there exists a varifold $V_\epsilon^{i}$ so that 
\begin{itemize}
\item $\mf L(\Pi_i)=\mf M(V_\epsilon^{i})$;
\item with respect to metric $\he$, $V_\epsilon^i$ is stationary in $N_\epsilon\setminus \partial T_\epsilon$ with free boundary; moreover, the first variation vanishes along each vector field $X$ satisfying that $X$ is tangential when restricted on $\partial N\cap N_\epsilon$ and $T_\epsilon$;     
\item with respect to metric $\he$, $V_\epsilon^i$ is almost minimizing in small annuli with free boundary for any $q\in N_\epsilon \setminus \partial T_\epsilon$.
\end{itemize}
Denote by $S_\epsilon^i$ the support of $\|V_\epsilon^i\|$. Also, by the regularity theorem given by Li-Zhou \cite{LZ16}*{Theorem 5.2}, when restricted in $N_\epsilon\setminus \partial T_\epsilon$, $S_\epsilon^i$ is a free boundary minimal hypersurface. 

Now we are going to prove that $S_\epsilon^i$ does not intersect $\partial T_\epsilon$ for $\epsilon$ small enough. Suppose not, then by Lemma \ref{lem:no mass accumulate}, for any neighborhood $U$ of $\partial T_\epsilon$ in $N$, $S_\epsilon^i$ intersects $U\setminus \partial T_\epsilon$. $S_\epsilon^i$ intersects $U\setminus \partial T_\epsilon$ for all neighborhood of $T_\epsilon$.
By the maximum principle, $\Si_\epsilon^i$ also has to intersect $\ms F(T\times\{\hat t\})$. Note that $\mf M(V_\epsilon^i)$ is uniformly bounded from above for $i$ since $\mf L(\Pi_i)$ is uniformly bounded. This contradicts the monotonicity formula \citelist{\cite{GLZ16}*{Theorem 3.4}\cite{Si}*{\S 17.6}}. Hence $S_\epsilon^i$ is an almost properly embedded free boundary minimal hypersurface in $N_\epsilon\setminus T_\epsilon$.

\medskip
Next we prove the index bound for $S_\epsilon^i$. Such a bound follows from the argument in \cite{MN16} (see also \cite{GLWZ19}*{Theorem 6.1} for free boundary minimal hypersurfaces) if we can construct a  sequence of metrics $h_\epsilon^j\rightarrow \he$ in the $C^\infty$ topology on $N$ so that all the free boundary minimal hypersurface in $(N,\partial N,T,h_\epsilon^j)$ is countable.

To do this, we first embed $(N,\partial N,T,\he)$ isometrically into a compact manifold with boundary $(\hat N,\partial \hat N,g_\epsilon)$. By \cite{ACS17}, we can get a sequence of smooth metrics $h_\epsilon^j\rightarrow g_\epsilon$ on $\hat N$ so that every finite cover of free boundary minimal hypersurface in $(\hat N,\partial\hat  N,h_\epsilon^j)$ is non-degenerate. Then using the argument in \cite{GLWZ19}*{Proposition 5.3} (see also \cite{Wang19}), the free boundary minimal hypersurfaces in $(N,\partial N,T,h_\epsilon^j)$ is countable.

\medskip
Now we have proved that for $\epsilon$ small enough, there exists $V_\epsilon^i$ so that $\mf L(V_\epsilon^i)=\mf L(\Pi_i)$ and the support of $V_\epsilon^i$ is a free boundary minimal hypersurface $S_\epsilon^i$ with $\mathrm{Index}(S_\epsilon^i)\leq p$. Letting $i\rightarrow\infty$, this theorem follows from the compactness for free boundary minimal hypersurfaces in \cite{GWZ18}.
\end{proof}

\begin{remark}\label{rmk:support in a large ball}
Furthermore, the monotonicity formulas \citelist{\cite{GLZ16}*{Theorem 3.4}\cite{Si}*{\S 17.6}} and mean convex foliation also indicate that there is $R>0$ and a point $q_0\in N\setminus \ms F(T\times [0,\hat t\,])$ such that for all $\epsilon$ small enough, $S_\epsilon^i$ is contained in the ball $B_\he(q_0,R)$. 
\end{remark}

Now we can prove the main result in this section, which can been seen as an analog of \cite{Song18}*{Theorem 9}.
\begin{theorem}\label{thm:min-max for cmpt with ends}
Let $(N,\partial N,T,g)$ be a compact manifold with boundary and portion in Theorem \ref{thm:min-max for Nepsilon}. Let $(\mc C(N),h)$ be as in Subsection \ref{subsection:add cylinders}. For all $p\in\{1,2,3,\cdots\}$, there exist disjoint, connected, embedded free boundary minimal hypersurfaces $\Gamma_1,\cdots,\Gamma_N$ contained in $N\setminus T$ and positive integers $m_1,\cdots,m_N$ such that 
\[\omega_p(\mc C(N);h)=\sum_{j=1}^Nm_j\Area(\Gamma_j).\]
Besides, if $\Gamma_j$ is one-sided then the corresponding multiplicity $m_j$ is even.
\end{theorem}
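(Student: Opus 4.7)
The plan is to realize $\omega_p(\mc C(N); h)$ as the limit of the min-max widths $\omega_p(N_\epsilon; h_\epsilon)$ provided by Theorem~\ref{thm:min-max for Nepsilon} and to extract a limit stationary integer rectifiable varifold $V_\infty$ on $(\mc C(N), h)$ whose support is the desired collection of free boundary minimal hypersurfaces in $N\setminus T$.

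For each sufficiently small $\epsilon>0$, Theorem~\ref{thm:min-max for Nepsilon} produces a stationary integer rectifiable varifold $V_\epsilon$ on $(N_\epsilon, h_\epsilon)$ whose support is a finite disjoint union of almost properly embedded free boundary minimal hypersurfaces in $N_\epsilon\setminus T_\epsilon$ with positive integer multiplicities, total mass $\omega_p(N_\epsilon; h_\epsilon)$, and total index at most $p$. By \cite{MNS17}*{Lemma 1} combined with the pointed $C^0$ convergence $(N,h_\epsilon,q_0)\to(\mc C(N), h, q_0)$ from Lemma~\ref{lem:convergence to CN}, one has $\omega_p(N_\epsilon;h_\epsilon)\to \omega_p(\mc C(N);h)$. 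By Remark~\ref{rmk:support in a large ball}, the support of $V_\epsilon$ is uniformly contained in $B_{h_\epsilon}(q_0,R)$ for a fixed $q_0\in N\setminus \ms F(T\times [0,\hat t\,])$ and $R$ independent of $\epsilon$.

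I would then pass to the limit $\epsilon\to 0$. Lemma~\ref{lem:convergence to CN} gives $C^\infty$ geometric convergence away from $T\subset\mc C(N)$, and the uniform ball containment places $V_\epsilon$, under the identification, in a compact subset $K\subset \mc C(N)$. Combining the uniform mass and index bounds with standard varifold compactness and the smooth compactness for embedded free boundary minimal hypersurfaces of bounded area and index \cite{GWZ18}, I would extract a subsequence converging to a stationary integer rectifiable varifold $V_\infty$ on $(\mc C(N),h)$, with free boundary on $\partial\mc C(N)$ and total mass exactly $\omega_p(\mc C(N); h)$. Stationarity and free boundary pass to the limit because the artificial portion $T_\epsilon$ has $h_\epsilon$-distance to $q_0$ blowing up (since $\int_{\epsilon/2}^\epsilon\vartheta_\epsilon\to +\infty$), so any contribution from the vanishing of the first variation along vector fields tangent to $T_\epsilon$ is pushed to the end of the cylinder in the limit and does not affect the free boundary condition on~$\partial\mc C(N)$.

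The crucial step is to verify $\spt(V_\infty)\subset N\setminus T$. A piece of $V_\infty$ supported on a slice $T\times\{t_0\}$ with $t_0>0$ would originate from the stretching region $\ms F(T\times(\epsilon/2,\epsilon))$ of $V_\epsilon$; the mean-convex foliation established in Lemma~\ref{lem:2 form and mean curvatue in new metric} furnishes barriers that, together with the uniform ball containment of Remark~\ref{rmk:support in a large ball} and translation along the cylinder (an isometry of $(T\times[0,+\infty),h)$), push any such slice out to the end of the cylinder, contradicting the bounded support. To exclude $T\subset\mc C(N)$ itself from the support, Lemma~\ref{lem:no mass accumulate} controls the corner $\partial T$, the monotonicity formula \citelist{\cite{GLZ16}*{Theorem 3.4}\cite{Si}*{\S 17.6}} handles interior accumulation, and the contracting-neighborhood hypothesis on $T$ combined with the strict mass gap $\omega_{p+1}-\omega_p\ge\Area(T_1)$ from Lemma~\ref{lem:growth of width} yields, via a maximum principle in the foliation near $T$, that $T$ cannot appear as a component of a $p$-min-max surface. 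Once $\spt(V_\infty)\subset N\setminus T$, the regularity theorems of Pitts--Schoen--Simon and \cite{LZ16}*{Theorem 5.2} decompose the support as a disjoint union of connected almost properly embedded free boundary minimal hypersurfaces $\Gamma_1,\dots,\Gamma_N$ with positive integer multiplicities $m_j$, yielding the mass identity. The parity claim for one-sided $\Gamma_j$ follows from the standard double-cover argument: on the orientable double cover of a tubular neighborhood the canonical $\mathbb{Z}_2$-representative of $\Gamma_j$ arises as the boundary of the one-sided tube, so any sweepout traverses both sheets and forces $m_j$ to be even. I expect the hardest part to be this third step, balancing the uniform support bound, the $h_\epsilon$-distance blow-up across the stretching region, the mean-convex foliation near~$T$, and the mass gap from Lemma~\ref{lem:growth of width}.
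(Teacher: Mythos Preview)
Your overall strategy---take the varifolds $V_\epsilon$ from Theorem~\ref{thm:min-max for Nepsilon}, pass to a limit $V_\infty$, and show its support lies in $N\setminus T$---matches the paper's. However, you have misidentified the central difficulty, and the argument as written has a genuine gap.

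You write that ``stationarity and free boundary pass to the limit because the artificial portion $T_\epsilon$ has $h_\epsilon$-distance to $q_0$ blowing up.'' The location of $T_\epsilon$ is not the obstruction. The real issue is that $h_\epsilon\to h$ only in $C^0$ (Lipschitz) across $T$, by Lemma~\ref{lem:CN is C1} and Lemma~\ref{lem:convergence to CN}, while the first variation involves first derivatives of the metric. Thus $h_\epsilon$-stationarity of $V_\epsilon$ does \emph{not} automatically yield $h$-stationarity (equivalently $g$-stationarity on $N$) of $V_\infty$ for vector fields whose support meets $T$. For the same reason, the smooth compactness theorem \cite{GWZ18} cannot be invoked across $T$ to deliver integrality or regularity there: it requires a fixed (or $C^\infty$-convergent) ambient metric. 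The paper devotes the bulk of the proof (Parts I--IV and Appendix~\ref{sec:app:Vinfty is stationary}) precisely to establishing $\delta_g V_\infty(X)=0$ for $X\in\mathfrak X(N,\partial N)$ by a direct first-variation computation, splitting $X=X_\perp^\epsilon+X_\parallel^\epsilon$ and controlling the error via the estimate~\eqref{eq:small bad part} on the set where the tangent planes of $V_\epsilon$ are transverse to the foliation. Only after stationarity is in hand does \cite{Song18}*{Proposition~3} yield integrality.

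Your mechanism for excluding $T$ from $\spt V_\infty$ is also different and, as stated, incomplete. The mass gap from Lemma~\ref{lem:growth of width} does not by itself prevent a copy of $T$ from appearing with multiplicity; one needs a maximum principle. The paper argues instead that every component of $S_\epsilon$ meets $\ms F(T\times\{\hat t\})$ (monotonicity), hence no component of $S_\infty$ is contained in $T$; then, once $V_\infty$ is known to be a $g$-stationary \emph{integral} varifold with free boundary, the strong maximum principle of Lemma~\ref{lem:strong MP} forces $S_\infty\cap T=\emptyset$. The parity of one-sided multiplicities then follows from the smooth convergence in \cite{GWZ18} on $N\setminus T$, not from a sweepout argument.
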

\begin{proof}
We follow the steps given by Song in \cite{Song18}.

Recall that $N_\epsilon=N\setminus \ms F(T\times [0,\epsilon/2))$. By Theorem \ref{thm:min-max for Nepsilon} and Remark \ref{rmk:support in a large ball}, we obtain a varifold $V_\epsilon$ so that
\begin{itemize}
\item $\mf M(V_\epsilon)=\omega_p(N_\epsilon;\he)$;
\item  the support of $V_\epsilon$ is an almost properly embedded free boundary minimal hypersurface, denoted by $S_\epsilon$; 
\item for fixed $p>0$, there exist $R>0$ and a point $q_0\in N\setminus \ms F(T\times [0,\hat t\,])$ such that for all $\epsilon$ small enough, $S_\epsilon^i$ is contained in the ball $B_\he(q_0,R)$;
\item $\mathrm{Index}(\text{support of }V_\epsilon)\leq p$.
\end{itemize}

The next step is to take a limit as a sequence $\epsilon_k\rightarrow 0$. Note that $\omega_p(N_\epsilon;h_\epsilon)$ converges to $\omega_p(\mc C(N);h)$. Thus $V_{\epsilon_k}$ subsequently converges to a varifold $V_\infty$ in $\mc C(N)$ of total mass $\omega_p(\mc C(N);h)$, whose support is denoted by $S_\infty$.

Using the compactness again, $S_\infty\llcorner(\mc C(N)\setminus T)$ is an almost properly embedded free boundary minimal hypersurface since $h_\epsilon$ converges smoothly in this region. Then by the maximum principle again, $S_\infty$ is contained in the compact set $(N,g)$. Furthermore, we will prove that $V_\infty$ is $g$-stationary with free boundary on $\partial N$. Once this has been proven, then applying \cite{Song18}*{Proposition 3}, $V_\infty$ is actually a $g$-stationary integral varifold with free boundary. Recall that each connected component intersects $F(T\times \{\hat t\})$. Hence no component of $S_\infty$ is contained in $T$. Then by the strong maximum principle in Lemma \ref{lem:strong MP}, $S_\infty\subset N\setminus T$. Therefore, from the compactness \cite{GWZ18}, $S_\infty$ is a free boundary minimal hypersurface in $N\setminus T$, and we also conclude that the one-sided components of $S_\infty$ have even multiplicities.

\medskip
It remains to show that $V_\infty$ is $g$-stationary with free boundary in $(N,\partial N,T,g)$. For $\epsilon\geq0$, we will denote by $\nabla^\epsilon$ and $\dv^\epsilon$ the connection and divergence computed in the metric $h_\epsilon$ (by convention $h_0=g$). Let $\mathfrak X(N,\partial N)$ be the collection of vector fields $X$ so that
\begin{itemize}
\item $X(x)\in T_xN$ for any $x\in N$;
\item $X$ can be extended to a smooth vector field on $\wti N$;
\item $X(x)\in T_x(\partial N)$ for any $x\in\partial N$;
\end{itemize}
Our goal is to prove that the first variation along $X\in\mathfrak X(N,\partial N)$ vanishes:
\begin{equation}\label{eq:aim:vanish 1st variation}
\delta V_\infty(X)=\int\dv_S^0 X(x)dV_\infty(x,S)=0.
\end{equation}
We use the same strategy with \cite{Song18}*{Proof of Theorem 8}. In the following, we give the necessary modification and put the computation in Appendix \ref{sec:app:Vinfty is stationary}.

\medskip
{\noindent\bf Part I:} {\em Normalize the coordinate function  with respect to $\he$.}

Recall that for $\epsilon>0$ small enough, the map
\[
\ms F:T\times [0,\hat t\,]\rightarrow N
\]
is a diffeomorphism onto its image. Note that the support of $V_\infty$ restricted to $N\setminus T$ is an almost properly embedded free boundary minimal hypersurface. Hence we can assume that the vector field $X$ is supported in $\ms F(T\times [0,\hat t/2\,])$. Thus for all $\epsilon$ small enough, the vector field $X$ restricted to $N_\epsilon:=N\setminus \ms F(T\times(0,\epsilon/2))$ can be decomposed into two components 
\[X=X^\epsilon_\perp+X^\epsilon_\parallel,\]
where $X^\epsilon_\perp$ is orthogonal to $\nabla^\epsilon t$ and $X^\epsilon_\parallel$ is a multiple of $\nabla^\epsilon t$. 

For $q=\ms F(x,t)$, denote
\[\n(q):=f(q)\vte(t)\nabla^\epsilon t,\]
which is a unit vector field with respect to the metric $h_\epsilon$. Recall that the coordinate $s$ is defined by
\[s(\ms F(x,t)):=-\int_t^\epsilon\vte(u)du.\]
Then for the points where the metric is changed, $s$ is negative. Clearly,
\[\nabla^\epsilon s=\vartheta_\epsilon(t)\nabla^\epsilon t=(\vartheta_\epsilon(t))^{-1}\nabla t,\]
which implies that
\[|\nabla^\epsilon s|_{\he}=(f(q))^{-1}=|\nabla t|_{g}.\]
We use $\pps $ and $\ppt$ to denote $\ms F_*(\pps)$ and $\ms F_*(\ppt)$, respectively. Then we also have
\[  \pps=(\vartheta_\epsilon(t))^{-1}\ppt .\]
Recall that the map $F$ is defined by the first eigenfunction in Lemma \ref{lemma:good neighborhood} and $\nabla t|_T=\phi^{-1}\n$. Then we can normalize and fix such a positive function so that $\max_{\{x\in T\}}\phi=1$. Since $\nabla t$ is a smooth vector field, then for $\epsilon$ small enough, 
\begin{equation}\label{eq:bound f}
2\max_{x\in T}\phi^{-1}\geq |\nabla t|_g\geq 1/2,\ \text{ for }\ x\in\ms F(T\times[0,2\epsilon]).
\end{equation}
Let $(\gamma(u))_{0\leq u\leq r}$ be a geodesic in $(N_\epsilon,h_\epsilon)$ with $\gamma(0)\in \ms F(T\times\{\epsilon\})$. Then
\[
s(\gamma(r))-s(\gamma(0))=\int_0^r\he(\nabla^\epsilon s,\gamma'(u))du\geq -\int_0^r|\nabla^\epsilon s |_{\he}du\geq -2r\max_{x\in T}\phi^{-1}.
\]
If we take $C_0=2\max_{x\in T}\phi^{-1}$, then 
\begin{equation}\label{eq:d is equiv to s}
B_{\he}(q_0,R)\subset \Big[N\setminus\ms  F(T\times[0,\epsilon])\Big]\cup\{q\in\ms  F(T\times [0,\epsilon]):s\geq -C_0R\}.
\end{equation}

\medskip
{\noindent\bf Part II:} {\em The uniform upper bound for points with non-parallel normal vector field.}

Let $(y,S)$ be a point of the Grassmannian bundle of $N$ and let $(e_1,\cdots,e_n)$ be an $h_\epsilon$-orthonormal basis of $S$ so that $e_1,\cdots,e_{n-1}$ are $h_\epsilon$-orthogonal to $\nabla^\epsilon t$. Denote by $\bar \n$ the unit normal vector of $S$ under the metric $h_\epsilon$. Let $e^*_n$ be a unit vector such that $(e_1,\cdots,e^*_n)$ is an $h_\epsilon$-orthonormal basis of the $n$-plane $h_\epsilon$-orthogonal to $\nabla^\epsilon t$ at $y$.

The main result in this part is that for any $b>0$,
\begin{equation}\label{eq:small bad part}
\lim_{\epsilon\rightarrow 0}\int_{\ms F(T\times[0,2\epsilon])\times \mf G(n+1,n)}\chi_{\{|\he(e_n,\n)|>b\}}dV_\epsilon(x,S)=0.
\end{equation}
In particular, 
\begin{equation}\label{eq:parallel to T}
V_\infty\llcorner\{(x,S):x\in T,S\neq T_x T\}=0.
\end{equation}

The proof is similar to Song \cite{Song18}*{(11)}. We postpone the proof of \eqref{eq:small bad part} to Subsection \ref{subsec:app:bad part is small} in Appendix \ref{sec:app:Vinfty is stationary}.

\medskip
We now explain how to deduce \eqref{eq:aim:vanish 1st variation} from the previous estimates. Take a sequence $\epsilon_k\rightarrow 0$. Consider
\[
A_k:=\ms F(T\times[0,2\epsilon_k])\ \  \text{ and }\ \  B_k:=N\setminus\ms F(T\times [0,2\epsilon_k]).
\]
Then by taking a subsequence (still denoted by $A_k$ and $B_k$), we can assume that there are two varifolds $V'_\infty$ and $V''_\infty$ in $N$ so that as $k\rightarrow\infty$, the following convergences in the varifolds sense take place:
\begin{gather*}
V_k:=V_{\epsilon_k}\rightharpoonup V_\infty,\\
V_k':=V_{\epsilon_k}\llcorner(A_k\times \mf G(n+1,n))\rightharpoonup V_\infty',\\
V_k'':=V_{\epsilon_k}\llcorner(B_k\times \mf G(n+1,n))\rightharpoonup V_\infty''.
\end{gather*}
Recall that we decomposed $X=\Xoe+\Xpe$. 

\medskip
{\noindent\bf Part III:} {\em We will show first that 
\begin{equation}\label{eq:normal part}
\int \dv^0X_\perp^0dV_\infty= \lim_{k\rightarrow\infty}\int\dv^{\epsilon_k}X_\perp^{\epsilon_k}dV_k=0.
\end{equation}}

Let $(x,S)$ and $e_1,\cdots,e_n,e_n^*$ be defined as before and let $S_\perp$ denote the $n$-plane at $x$ orthogonal to $\nabla s$. By the construction of $h_\epsilon$, we have that for any $e'\in S_\perp$,
\begin{equation}
\he(\nabla^\epsilon_{e'}\Xoe,e')=g(\nabla^0_{e'}\Xoe,e').
\end{equation}
Then a direct computation gives that 
\[\dve_S\Xoe=\dv^0_{S_\perp}\Xoe+\Upsilon(\epsilon,x,S,X),\]
where
\begin{align}\label{eq:bound Upsilon}
\Upsilon(\epsilon,x,S,X)&=\he(\nabla^\epsilon_{e_n}\Xoe,e_n)-\he(\nabla^\epsilon_{e_n^*}\Xoe,e_n^*)\\
&\leq 2|\nabla^\epsilon\Xoe|_{\he}\cdot|e_n-e_n^*|_{\he}.\nonumber
\end{align}
By the construction of $\he$, we have that $|\nabla^\epsilon\Xoe|_{\he}$ is uniformly bounded in $\epsilon>0$. Together with \eqref{eq:small bad part}, we in fact have (see Subsection \ref{subsec:proof of normal V'} for details)
\begin{equation}\label{eq:normal V'}
\lim_{k\rightarrow\infty}\int\dv_S^{\epsilon_k}X_\perp^{\epsilon_k}dV_k'(x,S)=\int\dv^0X_\perp^0dV'_\infty.
\end{equation}
On the other hand, using the facts that $\he=g$ and $X^\epsilon_\perp$ smoothly converges to $X^0_\perp$ in $B_k$, we have
\[\int\dv^{0}X_\perp ^0dV_\infty''=\lim_{k\rightarrow\infty}\int\dv^{0}X_\perp^0dV_k''=\lim_{k\rightarrow\infty}\int\dv^{\epsilon_k}X^0_\perp dV_k''=\lim_{k\rightarrow\infty}\int\dv^{\epsilon_k}X^{\epsilon_k}_\perp dV_k''.\]
Then \eqref{eq:normal part} follows immediately.

\medskip
{\noindent\bf Part IV:} {\em Finally, we prove that 
\[\int \dv^0 X_\parallel^0dV_\infty =0.\]
}

 By the definition of $\Xpe$, there exists $\varphi$ so that $X^0_\parallel=\varphi \nabla t$. Now define 
\[Z^\epsilon:=\varphi\nabla^\epsilon s.\]
Then the most important thing is that $|\nabla^\epsilon Z^\epsilon|_{\he}$ is uniformly bounded (see Subsection \ref{subsec:new vector has bounded gradient}). Using the same argument in \cite{Song18}*{Theorem 9}, such a property enables us (see Subsection \ref{subsec:parallel V''}) to prove that
\begin{equation}\label{eq:parallel V''}
\lim_{k\rightarrow\infty}\Big|\int\dv_S^{\epsilon_k}X_\parallel^{\epsilon_k}dV_k''(x,S)\Big|=0.
\end{equation}
Using the facts that $\he=g$ and $X^\epsilon_\parallel$ smoothly converges to $X^0_\parallel$ in $B_k$, we have
\[\int\dv^{0}X_\parallel^0dV_\infty''=\lim_{k\rightarrow\infty}\int\dv^{0}X_\parallel^0dV_k''=\lim_{k\rightarrow\infty}\int\dv^{\epsilon_k}X^0_\parallel dV_k''=\lim_{k\rightarrow\infty}\int\dv^{\epsilon_k}X^{\epsilon_k}_\parallel dV_k''=0.\]
On the other hand, the minimality of $T$ and \eqref{eq:parallel to T} give that 
\[\int\dv^0 X_\parallel^0dV_\infty'=0.\]
Therefore,
\[\int \dv^0 X_\parallel^0dV_\infty =\int\dv^0 X_\parallel^0dV_\infty'+\int\dv^{0}X_\parallel^0dV_\infty''=0.\]
The desired equality \eqref{eq:aim:vanish 1st variation} follows from Part III and IV.
\end{proof}

\section{Proof of main theorem}\label{sec:proof of main thm}
Now we are ready to prove our main theorem. The conditions  (\ref{assump:contracting portion}--\ref{assump:at most one minimal component}) defined in Subsection \ref{subsec:constr of area minimizer} will be used frequently.
\begin{theorem}\label{thm:infitely many fbmhs}
Let $(M^{n+1},\partial M,g)$ be a connected compact Riemannian manifold with smooth boundary and $3\leq (n+1)\leq 7$. Then there exist infinitely many almost properly embedded free boundary minimal hypersurfaces.
\end{theorem}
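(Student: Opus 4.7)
The plan is to argue by contradiction, adapting Song's strategy to the free boundary setting. Suppose $(M,\partial M,g)$ contains only finitely many almost properly embedded free boundary minimal hypersurfaces. I would first cut $M$ along all stable, properly embedded free boundary minimal hypersurfaces (finitely many by assumption) and retain a connected component $(N_1,\partial N_1,T_1,g)$ in which no such stable properly embedded hypersurface remains in $N_1\setminus T_1$. Then I would iteratively apply Lemma \ref{lem:general existence of good fbmh}: whenever the current $(N_i,\partial N_i,T_i,g)$ satisfies (A)--(F) and contains two disjoint connected free boundary minimal hypersurfaces in $N_i\setminus T_i$, the lemma produces a two-sided one with a proper contracting neighborhood in one side, along which I cut further. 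Since only finitely many candidates are available, the process terminates at some $(N,\partial N,T,g)$ satisfying (A)--(F) together with the Frankel property in $N\setminus T$. The cutting also arranges the hypotheses of Lemma \ref{lem:area lower bound} to hold, yielding the strict area gap $\Area(\Gamma)>\Area(T_1)$ (two-sided) or $2\Area(\Gamma)>\Area(T_1)$ (one-sided) for every free boundary minimal hypersurface $\Gamma\subset N\setminus T$, where $T_1$ is the component of $T$ of largest area.

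Next I would form the non-compact manifold with cylindrical ends $(\mathcal{C}(N),h)$ of Subsection \ref{subsection:add cylinders} and apply Theorem \ref{thm:min-max for cmpt with ends}: for every $p$,
\[
\omega_p(\mathcal{C}(N);h)=\sum_j m_j^{(p)}\Area(\Gamma_j^{(p)}),
\]
with disjoint free boundary minimal hypersurfaces $\Gamma_j^{(p)}\subset N\setminus T$ and $m_j^{(p)}$ even whenever $\Gamma_j^{(p)}$ is one-sided. The Frankel property forces these disjoint pieces to coincide, so $\omega_p=m^{(p)}\Area(\Gamma^{(p)})$ for a single $\Gamma^{(p)}$ drawn from the finite list of free boundary minimal hypersurfaces of $N\setminus T$. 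To extract a contradiction from Lemma \ref{lem:growth of width},
\[
p\,\Area(T_1)\le\omega_p(\mathcal{C}(N);h)\le p\,\Area(T_1)+Cp^{1/(n+1)},
\]
observe that if $\Gamma^{(p)}=\Gamma^{(p+1)}=\Gamma_0$ for infinitely many $p$, then $\omega_{p+1}-\omega_p=(m^{(p+1)}-m^{(p)})\Area(\Gamma_0)\ge\Area(\Gamma_0)$ (in fact $\ge 2\Area(\Gamma_0)$ in the one-sided case, since the multiplicities are even), while the upper bound forces $\omega_{p+1}-\omega_p\le\Area(T_1)+O(p^{1/(n+1)})$; letting $p\to\infty$ would yield $\Area(\Gamma_0)\le\Area(T_1)$ (respectively $2\Area(\Gamma_0)\le\Area(T_1)$), contradicting the strict area gap. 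Hence $\Gamma^{(p)}\neq\Gamma^{(p+1)}$ for all large $p$, and a refinement of this argument applied to pairs $\Gamma^{(p)}=\Gamma^{(p+L)}=\Gamma_0$ with bounded $L$ (via pigeonhole among the finitely many candidate hypersurfaces) constrains the areas $\Area(\Gamma_j)$ to a rigid arithmetic structure incompatible with the linear-with-sublinear-error growth of $\omega_p$.

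The main technical obstacle is twofold. First, one must verify throughout the iterative cutting that conditions (A)--(F) are preserved at each stage, in particular that properly or half-properly embedded hypersurfaces with mixed neighborhoods are correctly handled under cutting along possibly non-smooth portions. Second, the final arithmetic step is delicate: although the Frankel property reduces each $\omega_p$ to a multiple of a single $\Area(\Gamma^{(p)})$, different $p$'s may use different $\Gamma^{(p)}$, so one must combine the pigeonhole on the finite collection of candidates, the integrality (and evenness in the one-sided case) of the multiplicities $m^{(p)}$, and the strict area gap from Lemma \ref{lem:area lower bound} to exclude all configurations compatible with the tight width bounds of Lemma \ref{lem:growth of width}.
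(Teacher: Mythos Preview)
Your overall architecture matches the paper's: argue by contradiction, cut down to a piece $(N,\partial N,T,g)$ with the Frankel property and the strict area gap of Lemma~\ref{lem:area lower bound}, then run Theorem~\ref{thm:min-max for cmpt with ends} and Lemma~\ref{lem:growth of width} on $\mc C(N)$ to reach a numerical contradiction. However, two of your steps do not go through as written.

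\textbf{The cutting procedure.} You propose to first cut along all stable properly embedded free boundary minimal hypersurfaces, and then to iterate Lemma~\ref{lem:general existence of good fbmh} to refine further. This is circular: Lemma~\ref{lem:general existence of good fbmh} already \emph{requires} conditions (\ref{assump:contracting portion})--(\ref{assump:at most one minimal component}), but your initial cut does not deliver them. Cutting along a stable hypersurface that happens to have an expanding or mixed neighborhood violates (\ref{assump:contracting portion}) for the new portion; and nothing in your first step establishes (\ref{assump:generic sparation}) or (\ref{assump:at most one minimal component}). The paper does \emph{not} use Lemma~\ref{lem:general existence of good fbmh} as the engine of the cutting. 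Instead it performs three ordered phases of explicit cuts---first one-sided properly embedded hypersurfaces with contracting neighborhoods (to secure (\ref{assump:one sided expande})), then two-sided properly embedded ones with contracting neighborhoods (after which Claims~\ref{claim:two sides generic separation} and~\ref{claim:only one minimal} are proved to obtain (\ref{assump:generic sparation}) and (\ref{assump:at most one minimal component})), and finally two-sided half-properly embedded ones with a proper contracting neighborhood on one side. Only after all this is Lemma~\ref{lem:general existence of good fbmh} invoked, once, contrapositively, to certify the Frankel property of the final $(N,\partial N,T,g)$. You flag ``verifying (A)--(F) at each stage'' as a technical obstacle, but it is in fact the heart of the construction, and your proposed order of operations does not meet it.

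\textbf{The arithmetic endgame.} Your step ``the upper bound forces $\omega_{p+1}-\omega_p\le\Area(T_1)+O(p^{1/(n+1)})$; letting $p\to\infty$ would yield $\Area(\Gamma_0)\le\Area(T_1)$'' is incorrect as stated: from $\omega_p\ge p\,\Area(T_1)$ and $\omega_{p+1}\le (p+1)\Area(T_1)+C(p+1)^{1/(n+1)}$ one only gets $\omega_{p+1}-\omega_p\le \Area(T_1)+C(p+1)^{1/(n+1)}$, and the error term diverges rather than vanishes. What is actually true is that the \emph{telescoped} excess $\sum_{q\le P}\big(\omega_{q+1}-\omega_q-\Area(T_1)\big)\le C\,P^{1/(n+1)}$, so consecutive gaps exceed $\Area(T_1)+\epsilon$ only on a set of density zero; one must combine this with integrality of the multiplicities, the finite list of candidate areas, and the strict inequality from Lemma~\ref{lem:area lower bound}. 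The paper does not redo this combinatorics but invokes \cite{Song18}*{Lemma 13}, which packages exactly this argument. Your sketch via pigeonhole on pairs $\Gamma^{(p)}=\Gamma^{(p+L)}$ points in the right direction but, as written, the limiting step fails; you would need to reproduce Song's lemma (or cite it) rather than the single-step bound you gave.
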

\begin{proof}
Assume on the contrary that $(M,\partial M,g)$ contains only finitely many free boundary minimal hypersurfaces. Then by the construction in Lemma \ref{lemma:good neighborhood}, \eqref{assump:regular neighborhood} and \eqref{assump:half regular neighborhood} hold true.

\medskip
Now we prove that by cutting along free boundary minimal hypersurfaces in finite steps, we can construct a compact manifold with boundary and portion satisfying Frankel property and each free boundary minimal hypersurface that does note intersect the portion must have area larger than each connected component of the portion.

\medskip
Let $T_0^0$ be the union of the connected components of $\partial M$ which is a  closed minimal hypersurface having a contracting neighborhood in one side in $M$. Denote by $M_0^0:=M$ and $\partial M_0^0=\partial M\setminus T_0^0$. Then $(M^0_0,\partial M_0^0,T_0^0,g)$ is a compact manifold with boundary and portion satisfying \eqref{assump:contracting portion}, \eqref{assump:regular neighborhood} and \eqref{assump:half regular neighborhood}. 

\medskip
Firstly, cut $M_0^0$ along a one-sided properly embedded free boundary minimal hypersurface $\Gamma_0$ of $(M_0^0,\partial M_0^0,T_0^0,g)$ in $M^0_0\setminus T_0^0$ having a contracting neighborhood. Denote by $M_1^0$ the closure of $M_0^0\setminus \Gamma_0$ and define
\[\partial M^0_1:=M^0_1\cap \partial M_0^0 \ \ \text{ and } \ \ T_1^0:=T_0^0\cup \wti \Gamma_0,\]
where $\wti \Gamma_0$ is the double cover of $\Gamma_0$ in $M^0_1$. Then repeat this procedure by cutting $M_1^0$ along a one-sided free boundary minimal hypersurface $\Gamma_1\subset M_1^0\setminus \wti \Gamma_0$. Thus we construct a finite sequence $(M_0^0,\partial M_0^0,T_0^0,g)$, $(M_1^0,\partial M_1^0,T_1^0,g),\cdots,(M_J^0,\partial M_J^0,T_J^0,g)$ by successive cuts. Then after finitely many times (denoted by $J$), $M_J^0\setminus T_J^0$ does not contain any one-sided properly embedded free boundary minimal hypersurfaces having a contracting neighborhood. Denote by 
\[(M^1_0,\partial M^1_0,T^1_0,g)=(M_J^0,\partial M_J^0,T_J^0,g).\]
Clearly, $(M^1_0,\partial M^1_0,T^1_0,g)$ satisfies \eqref{assump:contracting portion}, \eqref{assump:regular neighborhood}, \eqref{assump:half regular neighborhood} and \eqref{assump:one sided expande}.

\medskip
Secondly, we cut $M^1_0$ along a two-sided, properly embedded, free boundary minimal hypersurface $\Gamma_0'$ in $(M^1_0,\partial M^1_0,T^1_0,g)$ that has a contracting neighborhood. Denote by $M_1^1$ the closure of one of the connected components of $M_0^1\setminus \Gamma_0'$ and define
\[\partial M_1^1:=M_1^1\cap \partial M_0^1 \ \ \text{ and } \ \ T_1^1:=M^1_1\cap (T_0^1\cup \Gamma_{0,1}'\cup \Gamma_{0,2}'),\]
where $\Gamma_{0,1}'$ and $\Gamma_{0,2}'$ are the two free boundary minimal hypersurfaces that are both isometric to $\Gamma_0'$. Then after finitely many times, we obtain a compact manifold with boundary and portion (denoted by $(M^2_0,\partial M^2_0,T^2_0,g)$) that every properly embedded free boundary minimal hypersurface in $M_0^2\setminus T_0^2$ has an expanding neighborhood. Moreover, we have that:
\begin{claim}\label{claim:two sides generic separation}
Every two-sided properly embedded free boundary hypersurface of $(M^2_0,\partial M^2_0,T^2_0,g)$ in $M^2_0\setminus T_0^2$ separates $M^2_0$.
\end{claim}
\begin{proof}[Proof of Claim \ref{claim:two sides generic separation}]
If not, there is a two-sided free boundary hypersurface $\Sigma$ in $(M^2_0,\partial M^2_0,T^2_0,g)$ does not separate $M_0^2$. Then $\Sigma$ represents a nontrivial relative homology class in $(M^2_0,\partial M^2_0)$. Then we can obtain an area minimizer, which contains a component $S$ in $M_0^2\setminus T_0^2$. In particular, $S$ is properly embedded and has a contracting neighborhood, which contradicts \eqref{assump:one sided expande} and the fact that every properly embedded free boundary minimal hypersurface in $(M^2_0,\partial M^2_0,T^2_0,g)$ has an expanding neighborhood.
\end{proof}

Similarly, we have the following:
\begin{claim}\label{claim:only one minimal}
At most one connected component of $\partial M^2_0$ is a closed minimal hypersurface, and if it happens, it has an expanding neighborhood in one side in $M^2_0$.
\end{claim}
\begin{proof}[Proof of Claim \ref{claim:only one minimal}]
We argue by contradiction. Assume there are two disjoint connected components $\Gamma''_1$ and $\Gamma_2''$ in $\partial M^2_0$ are closed minimal hypersurfaces. Then by the definition of $T_0^0$, both $\Gamma''_1$ and $\Gamma_2''$ have expanding neighborhoods in one side in $M^2_0$. Then $\Gamma_1''$ represents non-trivial relative homology class in $(M^2_0,\partial M^2_0\setminus (\Gamma_1''\cup\Gamma_2''))$. By minimizing the area of this class, we obtain a properly embedded free boundary minimal hypersurface having a contracting neighborhood, which leads to a contradiction.
\end{proof}

Claim \ref{claim:two sides generic separation} gives that each two-sided free boundary minimal hypersurface generically separates $M^2_0$ (see Subsection \ref{subsec:constr of area minimizer}). Claim \ref{claim:only one minimal} implies that $(M^2_0,\partial M^2_0,T^2_0,g)$ satisfies \eqref{assump:at most one minimal component}. Therefore, $(M^2_0,\partial M^2_0,T^2_0,g)$ satisfies (\ref{assump:contracting portion}--\ref{assump:at most one minimal component}).

\medskip
Thirdly, we cut $(M_0^2,\partial M_0^2,T_0^2,g)$ along a two-sided, half-properly embedded free boundary minimal hypersurface $\Gamma'''\subset M_0^2\setminus T_0^2$ which has a proper and contracting neighborhood in one side. By Claim \ref{claim:two sides generic separation}, $\Gamma'''$ generically separates $M_0^2$. Denote by $M_1^2$ the closure of the generic component containing the proper neighborhood in one side. Define
\[\partial M_1^2:=\overline{(M_1^2\cap \partial M_0^2)\setminus \Gamma'''} \ \ \text{ and } \ \ T_1^2=(T_0^2\cap M_0^2)\cup \Gamma'''.\]
Then $(M_1^2,\partial M_1^2,T_1^2,g)$ is a compact manifold with boundary and portion (see Figure \ref{fig:half_proper}).
\begin{figure}[h]
\begin{center}
\def\svgwidth{0.7\columnwidth}
  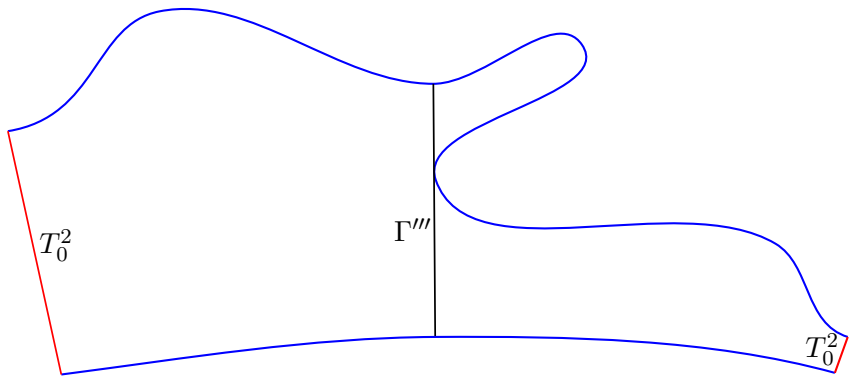
  \caption{Cutting half-properly embedded hypersurfaces.}
  \label{fig:half_proper}
\end{center}
\end{figure}
By successive cuts in finitely many times, we obtain a compact manifold with boundary and portion (denoted by $(N,\partial N,T,g)$) so that each two-sided, half-properly embedded, free boundary minimal hypersurface has a proper and expanding neighborhood in one side. By Lemma \ref{lem:general existence of good fbmh}, every two almost properly embedded free boundary minimal hypersurfaces of $(N,\partial N,T,g)$  in $N\setminus T$ intersect with each other. Without loss of generality, let $T_1$ be the connected component of $T$ so that
\[\Area(T_1)=\max\{\Area(T'):T' \text{ is a connected component of $T$}\}.\]
Then by Lemma \ref{lem:area lower bound}, each free boundary minimal hypersurface $\Sigma$ in $(N,\partial N,T,g)$ satisfies that
\begin{itemize}
\item  if $\Sigma$ is two-sided, $\Area(\Sigma)>\Area(T_1)$;
\item  if $\Sigma$ is one-sided, $2\Area(\Sigma)>\Area(T_1)$.
\end{itemize}
Thus we get the desired compact manifold with boundary and portion.

\medskip
We now proceed the proof of Theorem \ref{thm:infitely many fbmhs}. Let $\mc C(N)$ be the construction in Subsection \ref{subsection:add cylinders}. Theorem \ref{thm:min-max for cmpt with ends} gives that $\omega_p(\mc C(N);h)$ is realized by free boundary minimal hypersurfaces in $N\setminus T$. Moreover, since every two free boundary minimal hypersurfaces of $(N,\partial N,T,g)$ in $N\setminus T$ intersect each other, then there exist integers $\{m_p\}$ and free boundary minimal hypersurfaces $\{\Sigma_p\}$ so that 
\begin{equation}\label{eq:one component}
\omega_p(\mc C(N))=m_p\cdot \Area(\Sigma_p).
\end{equation}
By Lemma \ref{lem:growth of width}, the width of $\mc C(N)$ satisfies
\begin{gather*}
\omega_{p+1}(\mc C(N))-\omega_p(\mc C(N))\geq \Area(T_1);\\
p\cdot \Area(T_1)\leq \omega_p(\mc C( N))\leq p\cdot\Area(T_1)+C p^{\frac{1}{n+1}}.
\end{gather*} 
Together with \eqref{eq:one component}, we get a contradiction to \cite{Song18}*{Lemma 13}.
\end{proof}

\appendix
\section{A strong maximum principle}\label{sec:app:MP}
In \cite{Whi10}*{Theorem 4}, White gave a strong maximum principle for varifolds in closed Riemannian manifolds. Using the same spirit, Li-Zhou proved a maximum principle in compact manifolds with boundary, which played an important role in their regularity theorem for min-max minimal hypersurfaces with free boundary in \cite{LZ16}. In this appendix, we give a strong maximum principle, which is used in Theorem \ref{thm:min-max for cmpt with ends}.
\begin{lemma}[cf. \citelist{\cite{Whi10}*{Theorem 4}\cite{LZ17}*{Theorem 1.4}}]\label{lem:strong MP}
Let $(N,\partial N,T,g)$ be a compact manifold with boundary and portion so that $T$ is a free boundary minimal hypersurface. Let $V$ be a $g$-stationary varifold with free boundary in $\partial N$, i.e. for any $X\in\mathfrak X(N,\partial N)$,
\[\delta V(X)\Big(:=\int \dv XdV\Big)=0.\]
\begin{enumerate}
\item\label{item:contains whole component} If the support of $V$ (denoted by $S$) contains any point of a connected component of $T$, then $S$ contains the whole connected component;
\item\label{item:decomposition} If $V$ is a $g$-stationary integral varifold with free boundary, then $V$ can be written as $W + W'$, where the support of $W$ is the union of several connected components of $T$ and the support of $W'$ is disjoint from $T$.
\end{enumerate}
\end{lemma}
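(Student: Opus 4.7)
The proof plan is to adapt the strong maximum principle of White \cite{Whi10} together with the free boundary modifications of Li--Zhou \cite{LZ17}, exploiting the fact that each connected component of $T$ is itself a free boundary minimal hypersurface meeting $\partial N$ orthogonally. The key geometric input is the one-sided foliation of a neighborhood of $T$ supplied by Lemmas \ref{lem:good neighborhood for non-degenerate} and \ref{lemma:good neighborhood}: there is a diffeomorphism $\ms F : T \times (-\delta,\delta) \to \mc N$ whose slices $\ms F(T \times \{t\})$ are embedded free boundary hypersurfaces with mean curvature either of definite sign on each slice (in the non-degenerate or strictly unstable case) or identically zero on each slice (in the degenerate stable case). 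These slices provide the one-sided barriers needed in the comparison argument.

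For part (\ref{item:contains whole component}), fix a connected component $T_0$ of $T$ and suppose $S \cap T_0 \neq \emptyset$. Let $t$ be the coordinate on $\mc N$ with $T_0 = \{t=0\}$, and consider test vector fields of the form $X = \eta(t)\, \ms F_*(\partial/\partial t)$ with $\eta$ a suitable cutoff. Because $\ms F_*(\partial/\partial t)$ is by construction admissible in the sense of Section \ref{sec:pre for fbmh}, we have $X \in \mathfrak X(N,\partial N)$, so stationarity forces $\delta V(X) = 0$. Expanding $\dv_S X$ at a point $(q,S_q)$ with $q \notin T_0$ produces the mean curvature of the leaf through $q$ (which has definite sign) weighted by $\eta$, plus error terms that vanish whenever $S_q$ is tangent to the foliation. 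Lemma \ref{lem:no mass accumulate} prevents the mass of $V$ from concentrating on the corner $\partial T_0$, which allows us to localize the argument away from $\partial T_0$ and reduce to the interior setting. A judicious choice of $\eta$ then turns the equality $\delta V(X)=0$ into strict positivity unless $\spt V$ coincides with $T_0$ in a neighborhood of every touching point; in the degenerate case, where all leaves are minimal, one instead applies the Hopf boundary point lemma to the contact function between $S$ and $T_0$, as in the final step of White's argument. A standard open-and-closed argument along $T_0$ upgrades the local containment to $T_0 \subset S$.

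Part (\ref{item:decomposition}) follows from (\ref{item:contains whole component}) via Allard's constancy theorem. Let $T_1,\ldots,T_k$ be the components of $T$ with $T_j \cap \spt V \neq \emptyset$; by (\ref{item:contains whole component}) we have $T_j \subset \spt V$ for each $j$. Integrality of $V$ together with smoothness of $T_j$ implies that $V \llcorner T_j$ is an integral $n$-varifold supported on $T_j$; its first variation in the ambient manifold vanishes when tested against vector fields tangential to $\partial N$, and since $T_j$ is itself free boundary minimal, the constancy theorem on $T_j$ (treated as a connected smooth manifold with boundary) yields $V \llcorner T_j = m_j [T_j]$ for an integer $m_j \geq 1$. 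Setting $W := \sum_j m_j [T_j]$ and $W' := V - W$, one checks that $W'$ is still $g$-stationary with free boundary and $\spt W' \cap T = \emptyset$, which is the desired decomposition.

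The principal technical obstacle is controlling behaviour at the codimension-two corner $\partial T_0 \subset \partial N$, where $V$ could a priori concentrate mass and invalidate the barrier comparison. Lemma \ref{lem:no mass accumulate} is precisely the tool that rules this out and allows the argument to be carried out on neighborhoods that miss the corner. A secondary subtlety is the degenerate stable case, in which the foliation consists entirely of minimal leaves and provides no strict one-sided barrier; here the Hopf-type argument on the second-order contact function is what closes the gap, exactly as in the boundaryless setting of \cite{Whi10}.
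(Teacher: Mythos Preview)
Your proposal has the right overall shape, but there are genuine gaps in both parts.

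For part~(\ref{item:contains whole component}), the first-variation argument with $X = \eta(t)\,\ms F_*(\partial/\partial t)$ does not yield a clean sign contradiction: $\dv_S X$ contains not only the mean curvature of the leaf through $q$ but also tilt terms depending on how $S_q$ sits relative to the foliation, and these are neither small nor of definite sign in general. Your use of Lemma~\ref{lem:no mass accumulate} is also off-target: that lemma only asserts that the \emph{support} of $V$ cannot be confined to $\partial T$ in any neighborhood of the corner; it says nothing about mass concentration, and the paper does not invoke it here. The paper instead argues by contradiction in the classical barrier style: assuming $S$ does not contain $T_0$, the interior maximum principle of Solomon--White \cite{SW89} already gives $S \cap \mathrm{Int}\,T_0 = \emptyset$, so the only new issue is to exclude $S \cap \partial T_0 \neq \emptyset$. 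For this the paper uses Proposition~\ref{prop:h foliation with boundary} (a local prescribed-mean-curvature foliation with free boundary and prescribed Dirichlet data on the interior portion of $\partial U$) to build, near a putative touching point $p\in\partial T_0$, a one-sided free boundary barrier with strictly favorable mean curvature that lies outside $S$ but touches it; then White's interior maximum principle \cite{Whi10} and the free boundary version of Li--Zhou \cite{LZ17} give the contradiction. Your foliation from Lemmas~\ref{lem:good neighborhood for non-degenerate}--\ref{lemma:good neighborhood} is global along $T$ and does not allow the Dirichlet adjustment needed to slide the barrier until it first touches $S$.

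For part~(\ref{item:decomposition}), the constancy theorem does not apply to $V\llcorner T_j$ as stated: that restriction is not known to be stationary either in $N$ or intrinsically in $T_j$, because subtracting off the rest of $V$ destroys the first-variation identity. The paper follows White's standard device instead: set $d:=\inf\big(\{\Theta(x,V):x\in\mathrm{Int}\,T\}\cup\{2\Theta(x,V):x\in\partial T\}\big)$, note that $V-d[T]$ is again a $g$-stationary integral varifold with free boundary (since $T$ is free boundary minimal), and then invoke part~(\ref{item:contains whole component}) to conclude that its support is disjoint from $T$.
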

\begin{proof}
Without loss of generality, we assume that $T$ is connected and non-degenerate. We first prove \eqref{item:contains whole component} by contradiction. Assume that $S$ does not contain $T$. By \cite{SW89}*{Theorem}, $S$ does not intersect the interior of $T$. We now prove that $S\cap \partial T=\emptyset$. 

In this lemma, we always embed $N$ isometrically into a smooth, compact $(n+1)$-Riemannian manifold with boundary $(M,\partial M,g)$. We also fix a diffeomorphism $\Phi: T\times (-\delta,\delta)\rightarrow M$ which is associated with an extension of $\n$ in $\mathfrak X(N,\partial N)$. Here $\n$ is the unit outward normal vector field of $T$ in $N$.

We argue by contradiction. Assume that $p\in S\cap\partial T$. Firstly, we use \cite{SW89}*{Theorem, Step A} to construct a free boundary hypersurface outside $S$ near $p$ so that it has mean curvature vector field pointing towards $S$. To do this, we take $U\subset T$ be the neighborhood of $p$ from Proposition \ref{prop:h foliation with boundary} and $w|_{\Gamma_2}=\theta \eta$, where $\eta$ is a non-trivial and non-positive function supported in the interior of $\Gamma_2$ and $\theta>0$ is a constant. Note that $\Gamma_2 =\mathrm{Closure}(\partial U\cap \mathrm{Int} T)$. Note that $S$ does not intersect the interior of $T$. Then we can take $\theta>0$ sufficiently small so that if $\Phi(x,y)\in S$, then $y\leq \theta\eta(x)$. Fix this value $\theta$.

For simplicity, denote by $v_{s,t}$ the constructed graph function $v_t$ for $h=s$ and $w|_{\Gamma_2}=\theta \eta$ in Proposition \ref{prop:h foliation with boundary}. Then by the maximum principle, $v_{0,0}(p)<0$. Hence for $s>0$ small enough, we always have $v_{s,0}(p)<0$. Fix such $s$. Let $t_0$ be the largest $t$ so that $v_{s,t}$ intersects $S$. It follows that $t_0>0$, which implies that $S$ does not intersect $\Phi(\Gamma_2,\theta \eta+t_0)$. 

We now proceed our argument. Note that $v_{s,t_0}$ is a graph function of a free boundary hypersurface with mean curvature vector pointing towards $T$. Then by the strong maximum principle \cite{Whi10}, $S$ can not touch the interior of $\Phi ( U,v_{s,t_0})$. Using the free boundary version maximum principle \cite{LZ17}, $S$ can not touch $\Phi(\partial T\cap U,v_{s,t_0})$. Then this contradicts the construction of $v_{s,t_0}$.

\medskip
Now \eqref{item:decomposition} follows from \eqref{item:contains whole component} and a standard argument in \cite{Whi10}*{Theorem 4}. Indeed, set 
\[d:=\inf\{\{\Theta(x,V):x\in \mathrm{Int} T\}\cup\{2\Theta(x,V):x\in \partial T\}\}.\]
Then $V-d[T]$ is still a $g$-stationary integral varifold with free boundary, where $[T]$ is the the varifold associated to $T$. Then $V-d[T]$ does not contain $T$. Hence it does not intersect $T$. The proof is finished.
\end{proof}

\begin{proposition}\label{prop:h foliation with boundary}
Let $(M^{n+1},\partial M,g)$ be a compact Riemannian manifold with boundary, and let
$(\Sigma,\partial \Sigma)\subset (M,\partial M)$ be an embedded, free boundary minimal hypersurface. Given a point $p\in\partial \Sigma$, there exist $\epsilon>0$ and a neighborhood $U\subset M$ of $p$ such that if $h:U\rightarrow \mb R$ is a smooth function with $\|h\|_{C^{2,\alpha}}<\epsilon$ and 
\[w:\Sigma\cap U\rightarrow \mb R \text{ satisfies } \|w\|_{C^{2,\alpha}}<\epsilon,\]
then for any $t\in(-\epsilon,\epsilon)$, there exists a $C^{2,\alpha}$-function $v_t: U\cap \Sigma\rightarrow \mb R$, whose graph $G_t$ meets $\partial M$ orthogonally along $U\cap \partial \Sigma$ and satisfies:
\[H_{G_t} = h|_{G_t} ,\]
(where $H_{G_t}$ is evaluated with respect to the upward pointing normal of $G_t$), and
\[v_t(x) = w(x) + t, \text{ if } x\in \partial (U\cap \Sigma)\cap \mathrm{Int} M.\]
Furthermore, $v_t$ depends on $t,h,w$ in $C^1$ and the graphs $\{G_t : t\in[-\epsilon, \epsilon]\}$ forms a foliation.
\end{proposition}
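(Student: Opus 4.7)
The plan is to produce the family $\{v_t\}$ by a single application of the implicit function theorem in Hölder spaces, applied to a map whose zero set simultaneously encodes the prescribed mean curvature equation, the orthogonality (free boundary) condition on $U\cap\partial\Si$, and the Dirichlet data on the interior portion of $\partial(U\cap\Si)$. The outline parallels that of Lemma \ref{lemma:good neighborhood}, enhanced by an extra Dirichlet component in the boundary operator on the \emph{cut}.

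First, as in the setup preceding Lemma \ref{lem:good neighborhood for non-degenerate}, embed $M$ in an ambient $\wti M$ and pick an admissible vector field $X\in\mk X(\wti M)$ with $X|_\Si=\n$ and $X$ tangent to $\partial M$ near $\partial\Si$; let $\{\Phi(\cdot,s)\}$ denote its flow. Candidate hypersurfaces will be represented as graphs $G_v=\{\Phi(x,v(x)):x\in U\cap\Si\}$ for $v\in C^{2,\alpha}(U\cap\Si)$. Choose $U$ to be a small geodesic half-ball in $M$ centered at $p$, so that $U\cap\Si$ is a compact manifold with corners whose boundary decomposes as $\Gamma_1\cup\Gamma_2$ with $\Gamma_1:=U\cap\partial\Si$ and $\Gamma_2:=\overline{\partial(U\cap\Si)\cap\mathrm{Int}\,M}$; the two pieces meet transversally along a codimension-two corner.

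Second, define
\[
\Psi:\mc V\times\mc W\times\mb R\longrightarrow C^{0,\alpha}(U\cap\Si)\times C^{1,\alpha}(\Gamma_1)\times C^{2,\alpha}(\Gamma_2)
\]
by
\[
\Psi(v,h,w,t)=\Bigl(H_{G_v}\!\circ\!\Phi(\cdot,v)-h\!\circ\!\Phi(\cdot,v),\ \langle\n_{G_v},\nu_{\partial M}\rangle\big|_{\Gamma_1},\ v|_{\Gamma_2}-w|_{\Gamma_2}-t\Bigr),
\]
where $\mc V,\mc W$ are small balls about the origin in the appropriate $C^{2,\alpha}$ spaces. Free boundary minimality of $\Si$ yields $\Psi(0,0,0,0)=0$, and by the standard variational formulas (cf.\ \cite{GWZ_2}*{Lemma 2.5}) the partial derivative in $v$ at the origin is
\[
D_1\Psi_{(0,0,0,0)}(f)=\Bigl(Lf,\ \langle\nabla f,\nu_{\partial M}\rangle-f\,h^{\partial M}(\n,\n),\ f|_{\Gamma_2}\Bigr),
\]
with $L=\Delta+\Ric(\n,\n)+|A|^2$ the Jacobi operator of $\Si$.

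The main obstacle is verifying that $D_1\Psi_{(0,0,0,0)}$ is an isomorphism. This is a mixed Robin/Dirichlet elliptic boundary value problem on $U\cap\Si$; on a sufficiently small domain the first eigenvalue of $L$ with these mixed boundary conditions is strictly positive (indeed, by domain monotonicity it diverges to $+\infty$ as $\mathrm{diam}(U)\to 0$), so the kernel is trivial. Combined with Schauder theory for mixed boundary value problems on domains with corners — applicable because $\Gamma_1$ and $\Gamma_2$ meet transversally — this yields unique solvability in the stated Hölder classes, hence bijectivity of $D_1\Psi$.

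The implicit function theorem then furnishes, after possibly shrinking $\epsilon$, a unique $C^1$ map $(h,w,t)\mapsto v_{h,w,t}$ with $\Psi(v_{h,w,t},h,w,t)=0$; set $v_t:=v_{h,w,t}$ for the given $h,w$. To confirm the foliation property, differentiate the identity $\Psi(v_t,h,w,t)=0$ in $t$ at $(h,w,t)=(0,0,0)$: $u:=\partial_t v_t|_{t=0}$ satisfies $Lu=0$ on $U\cap\Si$, $\langle\nabla u,\nu_{\partial M}\rangle=u\,h^{\partial M}(\n,\n)$ on $\Gamma_1$, and $u\equiv 1$ on $\Gamma_2$. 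The strong maximum principle on the small domain forces $u>0$ throughout $U\cap\Si$; continuity in $(t,h,w)$ then gives $\partial_t v_t>0$ for all admissible $t$ after possibly shrinking $\epsilon$ further, which is exactly the foliation assertion.
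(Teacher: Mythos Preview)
Your approach is essentially the same as the paper's: both set up a map with three components encoding the prescribed mean curvature equation, the orthogonality condition on $\Gamma_1$, and the Dirichlet data on $\Gamma_2$, and then invoke the implicit function theorem (the paper simply cites \cite{Whi87}*{Appendix} and \cite{ACS17}*{Section 3}, writing the map as $\Psi(t,g,h,w,u)=(H_{g(t+w+u)}-h,\,g(N_g(t+w+u),\nu_g(t+w+u)),\,u|_{\Gamma_2})$, which is your $\Psi$ after the change of variable $v=t+w+u$). Your write-up supplies the details---the eigenvalue/Schauder argument for invertibility of the linearization and the maximum-principle argument for the foliation---that the paper leaves to the cited references.
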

\begin{proof}
The proof follows from \cite{Whi87}*{Appendix} together with the free boundary version \cite{ACS17}*{Section 3}. The only modification is that we need to use the following map to replace $\Phi$ in \cite{ACS17}*{Section 3}:
\[\Psi: \mb R \times X \times Y\times Y \times Y \rightarrow Z_1 \times Z_2 \times Z_3.\]
The map $\Psi$ is defined by
\[\Psi(t,g,h,w,u) = (H_{g(t+ w + u)}-h, g(N_g(t + w + u), \nu_g (t + w + u)), u|_{\Gamma_2} );\]
here all the notions are the same as \cite{ACS17}*{Section 3}. We remark that $\Gamma_2=\mathrm{Closure}(\partial (U\cap \Sigma)\cap \mathrm{Int} M)$.
\end{proof}

\section{Computation in the proof of Theorem \ref{thm:min-max for cmpt with ends} }\label{sec:app:Vinfty is stationary}
In this appendix, we collect the computation in Theorem \ref{thm:min-max for cmpt with ends}.
\subsection{Proof of \eqref{eq:small bad part}}\label{subsec:app:bad part is small}
Let $\varphi:\mb R\rightarrow\mb R$ be a non-negative function. Then it can also be seen as a function on $M$ by 
\[\varphi(\ms F(x,t)):=\varphi(s(\ms F(x,t))).\]
Let $H^\epsilon$ (resp $A^\epsilon$) denote the mean curvature (resp. second fundamental form) at $y$ of $\ms F(T\times \{t\})$. Let $\n:=\nabla^\epsilon s/|\nabla^\epsilon s|_{\he}=\nabla t/|\nabla t|_{\he}$. Then we have 
\[  \pps= f\n,  \]
where $\pps=\ms F_*(\pps)$. We can compute the divergence as follows:
\begin{align}
&\ \ \ \ \dve_S (\varphi \pps)=\dve_M(\varphi \pps)-\he(\nabla^\epsilon_{\bar\n} (\varphi f\n),\bar{\n})\label{eq:general divergence}\\
&=\varphi'(s)|\he(e_n,\n)|^2+\varphi \he(\nabla^\epsilon f,\n)+\varphi H^\epsilon f-\varphi \he(\nabla^\epsilon f,\bar\n)\he(\n,\bar\n)-\varphi f\he(\nabla_{\bar{\n}}
 \n,\bar{\n})\nonumber\\
&=\varphi'(s)\cdot |\he(e_n,\n)|^2+\varphi \he(\nabla^\epsilon f,\n)+\varphi H^\epsilon f-\varphi \he(\nabla^\epsilon f,\bar\n)\he(\n,\bar\n)- \varphi f\he(\nabla^\epsilon_{\n}\n,\bar{\n})\he(\n,\bar{\n})\nonumber\\
&\ \ \ \ -\varphi f\he(\nabla_{e_n^*}\n,e_n^*)\cdot|\he(\bar{\n},e_n^*)|^2\nonumber\\
&=[\varphi'(s)-\varphi fA^\epsilon(e_n^*,e_n^*)]\cdot  |\he(e_n,\n)|^2+\varphi H^\epsilon f-\varphi \he(\nabla^\epsilon f+f\nabla^\epsilon_\n\n,e_n^*)\he(\bar\n,e_n^*)\he(\n,\bar\n)+\nonumber\\
&\ \ \ \ +\varphi \he(\nabla^\epsilon f,\n)\cdot  |\he(e_n,\n)|^2.\nonumber
\end{align} 
Note that by  $\pps=f\n$,
\begin{equation}\label{eq:decom after change}
\he(\nabla_{\pps}^\epsilon\n,e_n^*)=-\he(\n,\nabla^\epsilon_{e_n^*}\pps)=-\he(\nabla^\epsilon f,e_n^*),
\end{equation}
and by  $\ppt=(f\vartheta_\epsilon)^{-1}\n$,
\begin{align*}
\he(\nabla^\epsilon f,\n)&=\he (\nabla^\epsilon f,(f\vartheta_\epsilon)^{-1}\ppt)
=(f\vartheta)^{-1}\frac{\partial f}{\partial t}.
\end{align*} 
Hence we conclude that \eqref{eq:general divergence} becomes
\begin{equation}\label{eq:simplified divergence}
	  \dve_S (\varphi \pps)= \Big [\varphi'(s)-\varphi fA^\epsilon(e_n^*,e_n^*)+\varphi (f\vartheta)^{-1}\frac{\partial f}{\partial t}\Big ]\cdot  |\he(e_n,\n)|^2+\varphi H^\epsilon f. 
\end{equation}

\medskip
If we define the vector field ($\beta$ is to be specified later)
\[Y^\epsilon:=(1-\beta(s))\exp(-Cs)\pps,\]
then from \eqref{eq:general divergence}, we have
\begin{align} 
&\ \ \ \ \dve_S Y^\epsilon\label{eq:divergence of Y epsilon}\\
&\leq\Big(\pps\Big[(1-\beta(s))\exp(-Cs)\Big] +(1-\beta(s))\exp(-Cs)\big[\he(\nabla^\epsilon f,\n)-f A^\epsilon(e_n^*,e_n^*)\big]\Big)\cdot|\he(e_n,\n)|^2+\nonumber\\
&+(1-\beta(s))\exp(-Cs)\cdot|H^\epsilon|f\nonumber\\ 
&\leq-\beta'(s)\cdot\exp(-Cs)|h_\epsilon(e_n,\n)|^2+\big(|H^\epsilon|f+|\he(\nabla^\epsilon f,\n)| \big).\nonumber
\end{align}
For the second inequality, we used that 
\[ \Big|(f\vartheta)^{-1}\frac{\partial f}{\partial t}-f A^\epsilon(e_n^*,e_n^*)\Big| \leq C.\]
 Since the varifold $V_\epsilon$ is $\he$-stationary with free boundary, for all $\epsilon>0$ small:
\[\delta V_\epsilon(Y^\epsilon)=\int \dve Y^\epsilon d V_\epsilon=0.\]

\medskip
Now we consider $\beta(s):\mb R\rightarrow [0,1]$ to be a non-decreasing function 
such that 
\begin{itemize}
\item $\beta(s)\equiv 0$ (resp. 1) when $s\leq -\wti R$ (resp. $s\geq 2\epsilon$);
\item on $[-\wti R,\epsilon]$, $\frac{\partial \beta}{\partial s}\geq 1/(2\wti R)$.
\end{itemize}
Here $\wti R$ is large enough so that $\spt V_\epsilon$ does not intersect $\{s<-\wti R\}$; see \eqref{eq:d is equiv to s}.

By the computation in \eqref{eq:divergence of Y epsilon}, for any $b>0$, we obtain the main result in this part:
\begin{align*}
&\ \ \ \ \int_{\ms F(T\times[0,2\epsilon])\times \mf G(n+1,n)}\chi_{\{|\he(e_n,\n)|>b\}}dV_\epsilon(x,S)\\
&\leq 2\wti R\exp(C\wti R)b^{-2}\int_{F(T\times[0,3\epsilon])\times \mf G(n+1,n)}|H^\epsilon|\cdot f\ dV_\epsilon(x,S)\nonumber\\
&\rightarrow 0,\  \text{ as }\ \ \epsilon\rightarrow 0.\nonumber
\end{align*}

\subsection{Proof of \eqref{eq:normal V'} }\label{subsec:proof of normal V'}
\begin{align*}
&\ \  \  \lim_{k\rightarrow\infty}\Big|\int\dv_S^{\epsilon_k}X_\perp^{\epsilon_k}dV_k'(x,S)-\int\dv^0X_\perp^0dV'_\infty\Big|\\
&=\lim_{b\rightarrow 0}\lim_{k\rightarrow\infty}\Big|\int \chi_{\{|h_{\epsilon_k}(e_n,\n)|\leq b\}}\dv_S^{\epsilon_k}X_\perp^{\epsilon_k}dV_k'(x,S)-\int\dv^0X^0_\perp dV'_\infty\Big|\\
&\leq \lim_{b\rightarrow 0}\lim_{k\rightarrow \infty}\Big|\int \chi_{\{|h_{\epsilon_k}(e_n,\n)|\leq b\}}\dv_{S_\perp}^0X_\perp^{\epsilon_k}dV_k'(x,S)-\int\dv^0X^0_\perp dV'_\infty\Big|+\\
&+\lim_{b\rightarrow 0}\lim_{k\rightarrow \infty}\int \chi_{\{|h_{\epsilon_k}(e_n,\n)|\leq b\}} 2|\nabla^{\epsilon_k}X^{\epsilon_k}_\perp|_{h_{\epsilon_k}}\cdot |e_n-e_n^*|dV_k'(x,S)\\
&=\lim_{k\rightarrow \infty}\Big|\int \dv^0_{S_\perp}X_\perp^{\epsilon_k}dV_k'(x,S)-\int\dv_{S_\perp}^0X^0_\perp dV'_\infty\Big|=0.
\end{align*}
Here the inequality is from \eqref{eq:bound Upsilon}.

\subsection{$|\nabla^\epsilon Z^\epsilon|_\he$ is uniformly bounded}\label{subsec:new vector has bounded gradient}
Recall that 
\[Z^\epsilon:=\varphi\nabla^\epsilon s=\varphi f^{-1}\n.\]
Then for $1\leq i,j\leq n-1$, 
\begin{gather*}
|\he(\nabla^\epsilon_{e_i}Z^\epsilon,e_j)|\leq |\varphi f^{-1}|\cdot |A ^{\epsilon}(e_i,e_j)|\leq |X^0_\parallel|_g,\\
|\he(\nabla^\epsilon_{e_j}Z^\epsilon,\n)|\leq |(\nabla^\epsilon(\varphi  f^{-1}))_\perp|_\he=|(\nabla^g(\varphi f^{-1}))_\perp|_g,\\
\he(\nabla^\epsilon_{\n}Z^\epsilon,\n)= \he(\nabla^\epsilon(\varphi f^{-1}),\n)=\he(\nabla ^\epsilon(\varphi f^{-1}),\vartheta^{-1}f^{-1} \ppt)=\vartheta^{-1}f^{-1}\ppt(\varphi f^{-1}),\\
|\he(\nabla^\epsilon_{\n}Z^\epsilon,e_j)|=|\he(f^{-1}\nabla^\epsilon_{\pps}Z^\epsilon,e_j)|=\big|\he(f^{-1}Z^\epsilon,\nabla^\epsilon_{e_j}(\pps))\big|\leq  |\varphi f^{-2}(\nabla f)_\perp|_g.
\end{gather*}

\subsection{Proof of \eqref{eq:parallel V''} }\label{subsec:parallel V''}
Let $H^\epsilon$ be the mean curvature as above. Recall that 
\[Z^\epsilon:=\varphi\nabla^\epsilon  s.\]
Then the divergence is
\begin{align}
\dve_SZ^\epsilon&=\dve_{S_\perp}Z^\epsilon+\he(\nabla^\epsilon_{e_n}Z^\epsilon,e_n)-\he(\nabla^\epsilon_{e_n^*}Z^\epsilon,e_n^*)\\
&=\he(Z^\epsilon,\n)\cdot H^\epsilon+\Upsilon'(\epsilon,x,S,X),\nonumber
\end{align}
where 
\begin{align*}
\big|\Upsilon'(\epsilon,x,S,X)\big|&=\big|\he(\nabla^\epsilon_{e_n}Z^\epsilon,e_n)-\he(\nabla^\epsilon_{e_n^*}Z^\epsilon,e_n^*)\big|\\
&\leq 2|\nabla^\epsilon Z^\epsilon|_{\he}\cdot|e_n-e_n^*|_{\he}.
\end{align*}
Recall that $h_{\epsilon_k}=g$ on $B_k$. Then we have 
\begin{align*} 
&\ \ \ \ \lim_{k\rightarrow\infty}\Big|\int\dv_S^{\epsilon_k}X_\parallel^{\epsilon_k}dV_k''(x,S)\Big|=\lim_{k\rightarrow\infty}\Big|\int \dv_S^{\epsilon_k}Z^{\epsilon_k}dV_k''(x,S)\Big|=\lim_{k\rightarrow\infty}\Big|\int \dv_S^{\epsilon_k}Z^{\epsilon_k}dV_k'(x,S)\Big|\\
&\leq \lim_{b\rightarrow 0}\lim_{k\rightarrow \infty}\int \chi_{\{|h_{\epsilon_k}(e_n,\n)|\leq b\}}|h_{\epsilon_k}(Z^{\epsilon_k},\n)\cdot H^\epsilon|+ 2|\nabla^{\epsilon_k}Z^{\epsilon_k}|_{h_{\epsilon_k}}\cdot |e_n-e_n^*|_{h_{\epsilon_k}}dV_k'(x,S)\\
&=0.
\end{align*}
Here the first equality comes from the fact that $X^{\epsilon_k}_\parallel=Z^{\epsilon_k}$ as in $B_k$; the second equality follows from that $V_k$ is stationary with free boundary; the last equality comes from Lemma \ref{lem:2 form and mean curvatue in new metric}. 
\bibliographystyle{amsalpha}
\bibliography{minmax}
\end{document}